\documentclass[12pt]{amsart}

\usepackage[pdfauthor   = {Mohamed\ Barakat\ and\ Markus\ Lange-Hegermann},
            pdftitle    = {A\ constructive\ approach\ to\ the\ module\ of\ twisted\ global\ sections\ on\ relative\ projective\ spaces},
            pdfsubject  = {},
            pdfkeywords = {Serre\ quotient\ category;\ reflective\ localization\ of\ Abelian\ categories;\ Gabriel\ monad;\ (truncated)\ module\ of\ twisted\ global\ sections;\ direct\ image\ functor;\ linear regularity;\ Gröbner\ bases;\ saturation},
            bookmarks=true,
            bookmarksopen=true,
            pagebackref=true,
            hyperindex=true,
            colorlinks=true,
            linkcolor=blue,
            citecolor=blue,
            filecolor=blue,
            urlcolor=blue,
            ]{hyperref}

\usepackage[utf8,utf8x]{inputenc}
\usepackage[english]{babel}
\usepackage[T1]{fontenc}
\usepackage{geometry}                
\geometry{a4paper}                   
\geometry{
  includeheadfoot,
  margin=2.81cm
}
\setlength{\marginparwidth}{2cm}

\usepackage{times}
\usepackage{mathrsfs}
\usepackage{latexsym}
\usepackage{amssymb}
\usepackage{amsthm}
\usepackage{epsfig}
\usepackage{colortbl}
\usepackage[all]{xy}
\usepackage{fancyvrb}
\usepackage{graphicx}
\usepackage[dvipsnames]{xcolor}
\usepackage{accents} 
\usepackage{enumerate}

\usepackage{wrapfig}
\usepackage{tikz}
\usetikzlibrary{shapes,arrows,matrix,backgrounds,positioning,plotmarks,calc,patterns,
decorations.shapes,
decorations.fractals,
decorations.markings,
decorations.pathreplacing,
decorations.pathmorphing,
decorations.text
}
\usepackage[colorinlistoftodos,shadow]{todonotes}

\usepackage{multirow}
\usepackage{mdwlist}
\usepackage{stmaryrd}
\usepackage{mathdots} 

\usepackage[toc,page]{appendix}
\usepackage{float}

\usepackage[sort&compress,capitalise]{cleveref}

\newtheoremstyle{mytheoremstyle} 
    {5pt}                    
    {5pt}                    
    {\itshape}                   
    {\parindent}                           
    {\bf}                   
    {.}                          
    {.5em}                       
    {}  

\theoremstyle{mytheoremstyle}

\newtheorem{theorem}{Theorem}[section]

\newtheorem{lemm}[theorem]{Lemma}
\newtheorem{prop}[theorem]{Proposition}
\newtheorem{coro}[theorem]{Corollary}

\newtheoremstyle{mytdefintionstyle} 
    {5pt}                    
    {5pt}                    
    {\rm}                   
    {\parindent}                           
    {\bf}                   
    {.}                          
    {.5em}                       
    {}  

\theoremstyle{remark}
\newtheorem{rmrk}[theorem]{Remark}

\theoremstyle{mytdefintionstyle}
\newtheorem{defn}[theorem]{Definition}
\newtheorem{exmp}[theorem]{Example}
\newtheorem{algo}[theorem]{Algorithm}

\newtheoremstyle{exmp_contd}
    {5pt}                    
    {5pt}                    
    {\rm}                   
    {\parindent}                           
    {\bf}                   
    {.}                          
    {.5em}                       
    {\thmname{#1}\ \thmnumber{ #2}\thmnote{#3}\ (continued)}  
\theoremstyle{exmp_contd}
\newtheorem*{exmp_contd}{Example}


\newcommand\nameft\textrm

\newcommand{\QQ}{{\mathscr{Q}}}
\renewcommand{\SS}{{\mathscr{S}}}

\newcommand{\WW}{{\mathscr{W}}}

\DeclareMathOperator{\End}{End}
\DeclareMathOperator{\Ext}{Ext}

\DeclareMathOperator{\coker}{coker}

\DeclareMathOperator{\Hom}{Hom}

\DeclareMathOperator{\GradedHom}{Hom_\bullet}

\newcommand{\GradedExt}{\operatorname{Ext}_\bullet}
\DeclareMathOperator{\Spec}{Spec}

\DeclareMathOperator{\Sh}{Sh}

\DeclareMathOperator{\Tor}{Tor}

\newcommand{\bS}{\mathbf{S}}
\newcommand{\bSd}{\mathbf{S}^{\geq d}}
\DeclareMathOperator{\Sat}{Sat}

\newcommand{\Egrlin}{{E\mathrm{\textnormal{-}}\operatorname{grlin}}}
\newcommand{\Egrlind}{{E\mathrm{\textnormal{-}}\operatorname{grlin}^{\ge d}}}
\newcommand{\Egrlinzero}{E\mathrm{\textnormal{-}}\operatorname{grlin}^0}
\newcommand{\Egrlindzero}{E\mathrm{\textnormal{-}}\operatorname{grlin}^{\ge d,0}}

\newcommand{\Sgrmod}{{\grS\mathrm{\textnormal{-}grMod}}}
\newcommand{\Sfpmod}{{\grS\mathrm{\textnormal{-}mod}}}
\newcommand{\Afpgrmod}{{A\mathrm{\textnormal{-}grmod}}}

\newcommand{\Sfpgrmod}{{\grS\mathrm{\textnormal{-}grmod}}}
\newcommand{\SfpgrMod}{{\grS\mathrm{\textnormal{-}grMod}}}
\newcommand{\Efpgrmod}{{E\mathrm{\textnormal{-}grmod}}}
\newcommand{\Sfpgrmodd}{\grS\mathrm{\textnormal{-}grmod}_{\geq d}}
\newcommand{\Sqfgrmod}{{\grS\mathrm{\textnormal{-}qfgrmod}}}

\newcommand{\Coh}{\mathfrak{Coh}\,}

\newcommand\grS{S}

\newcommand\grM{M}

\newcommand\shF{\mathcal{F}}

\newcommand\m{\mathfrak{m}}

\renewcommand\O{\mathcal{O}}
\newcommand\PP{\mathbb{P}}

\newcommand\A{\mathcal{A}}
\newcommand\C{\mathcal{C}}

\newcommand\F{\mathcal{F}}

\renewcommand{\O}{\mathcal{O}}

\DeclareMathOperator\linreg{linreg}
\newcommand\linregd{\linreg_{\geq d}}
\DeclareMathOperator\reg{reg}

\newcommand\T{\mathbf{T}}

\newcommand{\Z}{\mathbb{Z}}

\renewcommand\phi{\varphi}

\DeclareMathOperator\Sym{Sym}
\DeclareMathOperator\id{id}
\DeclareMathOperator\Id{Id}
\newcommand{\RR}{\mathbf{R}}
\newcommand{\RRd}{\mathbf{R}^{\ge d}}

\newcommand{\MM}{\mathbf{M}}
\newcommand{\MMd}{\mathbf{M}_{\ge d}}

\DeclareMathOperator\cores{co-res}

\definecolor{darkgray}{rgb}{0.3,0.3,0.3}

\pgfarrowsdeclarecombine[\pgflinewidth]
  {doublestealth}{doublestealth}{stealth'}{stealth'}{stealth'}{stealth'}

\definecolor{darkgreen}{rgb}{0.008,0.617,0.067}
\definecolor{brown}{rgb}{0.6,0.4,0.2}




\newif\ifjournalversion

\author{Mohamed Barakat}
\address{Department of mathematics, University of Siegen, 57068 Siegen, Germany}
\email{\href{mailto:Mohamed Barakat <mohamed.barakat@uni-siegen.de>}{mohamed.barakat@uni-siegen.de}}

\author{Markus Lange-Hegermann}
\address{}
\email{\href{mailto:Markus Lange-Hegermann <markus.lange.hegermann@rwth-aachen.de>}{markus.lange.hegermann@rwth-aachen.de}}

\begin{document}

\title[A constructive approach to the module of twisted global sections]{A constructive approach to the module of twisted global sections on relative projective spaces}

\begin{abstract}
  The ideal transform of a graded module $M$ is known to compute the module of twisted global sections of the sheafification of $M$ over a relative projective space.
  We introduce a second description motivated by the relative BGG-correspondence.
  However, our approach avoids the full BGG-correspondence by replacing the \nameft{Tate} resolution with the computationally more efficient purely linear saturation and the \nameft{Castelnuovo-Mumford} regularity with the often enough much smaller linear regularity.
  This paper provides elementary, constructive, and unified proofs that these two descriptions compute the (truncated) modules of twisted global sections.
  The main argument relies on an established characterization of \nameft{Gabriel} monads.
\end{abstract}

\keywords{%
\nameft{Serre} quotient category,
reflective localization of \nameft{Abel}ian categories,
\nameft{Gabriel} monad,
(truncated) module of twisted global sections,
direct image functor,
linear regularity,
\nameft{Gröbner} bases,
saturation
}
\subjclass[2010]{%
13D02, 
13D07, 
13D45, 
13P10, 
13P20, 
18E10, 
18E35, 
18A40 
68W30, 
14Q99 
}

\maketitle


\section{Introduction}

We consider coherent sheaves over the projective space $\PP_B^n$ for a suitable ring $B$.
Any such coherent sheaf $\F$ can be described by a graded module over the polynomial ring $S:=B[x_0,\ldots,x_n]$.
Even though this representation is not unique, among the different graded $S$-modules representing $\F$ there is the distinguished representative $H^0_\bullet(\F)$, the module of twisted global sections\footnote{$H^0_\bullet(\F) := \bigoplus_{p\in \Z}H^0(\PP_B^n,\F(p))$.}.
In general the module of twisted global sections is not finitely generated, but any of its truncations $H^0_{\geq d}(\F)$ is.

In this paper we treat the functor $H^0_{\geq d}$ constructively.
It is well-known that the ideal transform functor computes $H^0_{\geq d}$, which we state as Theorem \ref{thm:Dm}.
Furthermore, in Theorem \ref{thm:tate_monad} we present a new recursive algorithm, inspired by \cite{EFS,ES}, to compute $H^0_{\geq d}$.
The \nameft{Tate} resolution in loc.\  cit.\  incorporates all higher cohomologies\footnote{See the introduction to Section~\ref{sec:saturate_linear_complex} and Remark~\ref{rmrk:E1}.}, whereas our new algorithm introduces a smaller complex, called the purely linear saturation, which is tailored to $H^0_\bullet$ and computationally more efficient as it discards all higher cohomologies.
This paper presents a categorical setup which yields unified elementary proofs of both theorems.

A central notion in this paper is that of the linear regularity.
We use it in Corollary~\ref{coro:saturating_power} for the convergence analysis of the inductive limit defining the ideal transform and in Corollary~\ref{coro:defect_of_saturation} to give the number of recursion steps in our new algorithm.
Like the \nameft{Tate} resolution, the \nameft{Castelnuovo-Mumford} regularity incorporates all higher cohomologies.
And again, the linear regularity is an adaption to $H^0_\bullet$ which discards all higher cohomologies.
It follows that the linear regularity is smaller or at most equal to the \nameft{Castelnuovo-Mumford} regularity.
This provides another reason why computing the purely linear saturation is more efficient than computing the Tate resolution.

One application is the computation of global sections.
More precisely, let $\F$ be a coherent sheaf on $\PP_B^n$ and $\pi:\PP^n_B \to \Spec B$ the natural projection.\footnote{%
The base $\Spec B$ might even serve as the ambient space of a geometric quotient, e.g., if $B$ is the \nameft{Cox} ring of a toric variety.}
The direct image sheaf $H^0(\F) := \pi_* \F$ over $\Spec B$ is the degree zero part (cf.\ Algorithm~\ref{algo:degree_i}) of $H^0_{\geq 0}(\F)$.
For example, if $\O_X \subset \O_{\PP_B^n}$ denotes the structure sheaf  of a subscheme $X \subset \PP_B^n$, then computing $\pi_* \O_X$ is the geometric form of eliminating all $n+1$ homogeneous coordinates
$x_0,\ldots,x_n$ from the defining equations of $X \subset \PP_B^n$.

A computer model of the \nameft{Abel}ian category $\Coh \PP_B$ of coherent sheaves on $\PP_B^n$ must incorporate the objects \emph{and} the morphisms\footnote{The more involved modeling of the morphisms is of no relevance for this paper (cf.~\cite{BL_GabrielMorphisms}).}.
We represent an object $\F \in \Coh \PP_B^n$ by a finitely presented graded $S$-modules $M$, such that $\F:=\widetilde{M}$ is the sheafification of $M$.

The ideal transform is defined to be the inductive limit $D_\m(M)$ of the graded modules $\GradedHom(\m^\ell,M)$, where $\m = \langle x_0,\ldots,x_n \rangle \lhd S$ is the irrelevant ideal.
The equivalence $H^0_\bullet\left(\widetilde{\,\cdot\,}\right) \simeq \varinjlim_\ell \GradedHom\left(\m^\ell,\,\cdot\,\right)$, reproved elementarily as Theorem \ref{thm:Dm}, implies that $H^q_\bullet\left(\widetilde{\,\cdot\,}\right) \simeq \varinjlim_\ell \GradedExt^q\left(\m^\ell,\,\cdot\,\right)$ for the higher derived cohomology functors \cite[20.4.4]{BrSh}.

Another description of the cohomology functors $H^q_\bullet$ arose from the BGG-correspondence \cite{BGG}.
It is a triangle equivalence between the bounded derived category of $\Coh \PP^n_B$ (originally over a base field $B$) and the stable category of finitely generated graded $E$-modules over the exterior algebra $E$, the \nameft{Koszul} dual $B$-algebra of $S$.
Since $E$ is a \nameft{Frobenius} algebra, this stable category is easily seen to be triangle equivalent to the homotopy category of so-called \nameft{Tate} complexes.
A constructive description of the composition of these two triangle equivalences was given in \cite{EFS,DE}.
The treatment of the relative BGG-correspondence in \cite{ES} does not only describe the coherent sheaf cohomologies $H^q(\widetilde{M}) = R^q \pi_* \widetilde{M}$ as $B$-modules, but also provides a concrete realization of the direct image complex $R \pi_* \widetilde{M}$.
However, in this approach even the computation of global sections $H^0(\widetilde{M})$ in the relative case relies a priori on the entire \nameft{Tate} resolution.

The bottom complex $E_1^{\geq d,0} \! \left(\T^{\geq d}(M)\right)$ on the first page of the spectral sequence of the \nameft{Tate} resolution $\mathbf{T}^{\geq d}(M)$ is a linear complex which corresponds to $H^0_{\geq d}(\widetilde{M})$.
We define a new so-called purely linear saturation functor $\bSd$, which is computationally far more economic than the \nameft{Tate} functor $\mathbf{T}^{\geq d}$.
In Theorem~\ref{thm:tate_monad}, Proposition~\ref{prop:monads_computing_H0}, and Corollary\ref{coro:monads_computing_H0} we prove that $\bSd$ computes $H^0_{\geq d}(\widetilde{M})$, and hence $E_1^{\geq d,0} \! \left(\mathbf{T}^{\geq d}(M)\right)$.
The point is that we can compute $\bSd$ \emph{without} computing $\mathbf{T}^{\geq d}$.
This statement is not obvious in the relative case (cf.~Remark~\ref{rmrk:E1}).
Furthermore, the linear regularity of $M$ gives the precise number of recursion steps needed to achieve saturation.
Since computing $\bSd$ relies on \nameft{Gröbner} bases over the exterior algebra $E$ of \emph{finite} rank over $B$ the involved algorithms are, for many examples, faster than the ones for the ideal transform.
The latter involve \nameft{Gröbner} bases over the polynomial ring $S$ of \emph{infinite} rank over $B$.

In order to develop a unified proof that both functors $D_{\m, \geq d}$ and $\bSd$ compute $H^0_{\geq d}(\widetilde{\,\cdot\,})$ we need an appropriate categorical setup.
Abstractly, the category $\Coh\PP_B^n$ of coherent sheaves on $\PP_B^n$ is equivalent to the \nameft{Serre} quotient category $\A/\C$ of the \nameft{Abel}ian category $\A$ of finitely presented graded $S$-modules modulo a certain subcategory $\C$.
The necessary categorical language is summarized in Section~\ref{sec:cat}.
In Section~\ref{sec:sheaves} we show that the categories $\A$ and $\C$ can be replaced by their respective full subcategories of modules which vanish in degrees $<d$ for an arbitrary but fixed $d \in \Z$ (cf.\ Proposition~\ref{prop:truncated_quotients_are_sheaves}).
The $\A$-endofunctor $M \mapsto H^0_{\geq d}(\widetilde{M})$ is a special case of what we call a \nameft{Gabriel} monad, which we characterized in \cite{BL_Monads} by a short set of properties.
By verifying this short list of properties for the two functors $D_{\m, \geq d}$ and $\bSd$ we prove that they compute $H^0_{\geq d}(\widetilde{\,\cdot\,})$.

Two further applications rely on constructivity of the \nameft{Gabriel} monad, and now become algorithmically accessible for the category $\Coh \PP_B^n$:
First, the \nameft{Serre} quotient category $\A/\C$ becomes constructively \nameft{Abel}ian once $\A$ is constructively \nameft{Abel}ian \cite[Appendix~D]{BL_GabrielMorphisms}.\footnote{However, the approach using \nameft{Gabriel} morphisms in \cite{BL_GabrielMorphisms} seems computationally faster.}
Second, the computability of the bivariate $\Hom$ and $\Ext^i$ functors in $\A/\C$ now reduces to the computability of $\Hom$ and $\Ext^i$ in $\A$ (modulo a directed colimit process if $i>0$) \cite{BL_ExtComputability}.

\section{Preliminaries on \texorpdfstring{\nameft{Serre}}{Serre} Quotient Categories} \label{sec:cat}

A non-empty full subcategory $\mathcal{C}$ of an \nameft{Abel}ian category $\A$ is called \textbf{thick} if it is closed under passing to subobjects, factor objects, and extensions.
In this case the \textbf{\nameft{Serre} quotient category} $\A/\C$ is a category with the same objects as $\A$ and $\Hom$-groups defined by the directed colimit
\[
  \Hom_{\A/\C}(M,N) := \varinjlim_{\substack{M' \le M, N' \le N,\\ M/M', N' \in \C}} \Hom_\A(M',N/N')\mbox{.}
\]
The \textbf{canonical functor} $\QQ:\A \to \A/\C$ is defined to be the identity on objects and maps a morphism $\phi \in \Hom_\A(M,N)$ to its class in the directed colimit $\Hom_{\A/\C}(M,N)$.
The category $\A/\C$ is \nameft{Abel}ian and the canonical functor $\QQ: \A \to \A/\C$ is exact.
An object $M \in \A$ is called \textbf{$\C$-saturated} if $\Ext_\A^0(C,M)\cong\Ext_\A^1(C,M)\cong0$ for all $C\in\C$, i.e., $M$ has no nonzero subobjects in $\C$ and every extension of an object $C \in \C$ by $M$ is trivial.
Denote by $\Sat_\C(\A)\subset\A$ the full subcategory of $\C$-saturated objects and by $\iota: \Sat_\C(\A) \hookrightarrow \A$ its full embedding.
A complex $F$ in $\Sat_\C(\A)$ is exact if and only if $\iota(F)$ has homology in $\C$.

A thick subcategory $\C\subset\A$ is called \textbf{localizing} if the canonical functor $\QQ: \A \to \A/\C$ admits a right adjoint $\SS:\A/\C \to \A$, called the \textbf{section functor} of $\QQ$.
In this case, the image of $\SS$ is contained in $\Sat_\C(\A)$ and $\A/\C\xrightarrow{\SS}\SS(\A/\C)\hookrightarrow\Sat_\C(\A)$ are equivalences of categories.
The $\Hom$-adjunction $\Hom_{\A/\C}(\QQ(M),\QQ(N))\cong\Hom_{\A}(M,(\SS\circ\QQ)(N))$ allows to compute $\Hom$-groups in $\A/\C$ if they are computable in $\A$ and the monad $\SS\circ\QQ$ is computable.
In particular, this avoids computing the directed colimit in the definition of $\Hom_{\A/\C}$.
We call any monad equivalent to $\SS\circ\QQ$ a \textbf{\nameft{Gabriel} monad} (of $\A$ w.r.t.\ $\C$).
The following theorem characterizes \nameft{Gabriel} monads.

\begin{theorem}[{\cite[Thm.~3.6]{BL_Monads}}\footnote{Thm.~4.6 in \texttt{arXiv} version.}]\label{thm:Csaturating}
  Let $\C \subset \A$ be a localizing subcategory of the \nameft{Abel}ian category $\A$ and $\iota: \Sat_\C(\A) \hookrightarrow \A$ the full embedding.
  An endofunctor $\WW:\A \to \A$ together with a natural transformation $\eta:\Id_\A \to \WW$ is a \nameft{Gabriel} monad (of $\A$ w.r.t.\ $\C$) if and only if the following five conditions hold:
  \begin{enumerate}
    \item $\C \subset \ker \WW$, \label{thm:Csaturating:ker}
    \item $\WW(\A) \subset \Sat_\C(\A)$, \label{thm:Csaturating:im}
    \item the corestriction $\cores_{\Sat_\C(\A)} \WW$ of $\WW$ to $\Sat_\C(\A)$ is exact, \label{thm:Csaturating:exact}
    \item $\eta \WW = \WW \eta: \WW \to \WW^2$, and \label{thm:Csaturating:idem}
    \item $\eta\iota: \iota \to \WW \iota$ is a natural isomorphism. \label{thm:Csaturating:eta}
  \end{enumerate}
\end{theorem}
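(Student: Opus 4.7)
The plan is to identify $(\WW, \eta)$ with the reflection of $\A$ onto its reflective full subcategory $\Sat_\C(\A)$; by uniqueness of reflections this forces $\WW \cong \SS \circ \QQ$ as a monad.

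For necessity, take $\WW := \SS \circ \QQ$ with $\eta$ the unit of $\QQ \dashv \SS$. Condition (a) is immediate from $\QQ(\C) = 0$; (b) is built into the construction of $\SS$; (c) follows by factoring $\WW$ through the equivalence $\A/\C \xrightarrow{\sim} \Sat_\C(\A)$ and invoking exactness of $\QQ$; (d) is the standard identity for an idempotent monad, derivable from the triangle identities; (e) is the defining feature that the unit of a reflection is invertible exactly on the reflective subcategory.

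For sufficiency, I would use (b), (d), and (e) to exhibit $\WW M$ together with $\eta_M$ as the universal solution to the reflection problem. Given $f: M \to N$ with $N \in \Sat_\C(\A)$, the candidate $g := \eta_N^{-1} \circ \WW f$ is well defined by (e) and satisfies $g \circ \eta_M = f$ by naturality of $\eta$. For uniqueness, if $h: \WW M \to N$ satisfies $h \circ \eta_M = 0$, then
\[
  \eta_N \circ h \;=\; \WW h \circ \eta_{\WW M} \;\stackrel{(d)}{=}\; \WW h \circ \WW \eta_M \;=\; \WW(h \circ \eta_M) \;=\; 0,
\]
and the invertibility of $\eta_N$ from (e) forces $h = 0$. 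Conditions (a) and (c) then align this reflection with the one produced by $\SS \circ \QQ$: (a) forces $\WW$ to vanish on $\C$, matching $\QQ$, and (c) ensures that $\WW$ descends to an exact functor $\bar\WW : \A/\C \to \Sat_\C(\A)$, which one identifies with $\iota \circ \SS$ via the universal property just established.

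The main technical hurdle I expect is promoting the natural isomorphism of underlying functors $\WW \cong \SS \circ \QQ$ to an isomorphism of the associated monads, i.e.\ verifying that the comparison intertwines the two units (and hence the two multiplications). This reduces to a formal diagram chase using (d) together with the uniqueness half of the reflection property; organizing the naturality squares so that each identification is forced rather than freely chosen is the step that warrants the most care.
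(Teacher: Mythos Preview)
The paper does not prove this theorem: it is quoted from the authors' earlier work \cite[Thm.~3.6]{BL_Monads} and used as a black box throughout. So there is no in-paper proof to compare your proposal against.

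That said, your strategy---showing that conditions (b), (d), (e) force $\WW$ to be a reflection onto $\Sat_\C(\A)$ and then invoking uniqueness of reflections---is the natural one and your reflection argument is correct as written. One remark on the role of (a) and (c): once you have established via (b), (d), (e) that $\cores_{\Sat_\C(\A)}\WW$ is left adjoint to $\iota$, and since the localizing hypothesis already gives $\SS\circ\QQ$ as another such reflection, uniqueness of adjoints yields the monad isomorphism directly. Conditions (a) and (c) are then \emph{consequences} of (b), (d), (e) together with the localizing hypothesis, not independent ingredients needed to ``align'' the two reflections. Your description of their role in the sufficiency direction is therefore slightly off; they are part of the statement because they are useful in applications (and of course hold for the Gabriel monad), but the sufficiency proof does not require them as separate inputs. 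The ``technical hurdle'' you flag---upgrading the functor isomorphism to a monad isomorphism---is handled automatically by the standard fact that any two reflections onto the same reflective subcategory are uniquely isomorphic as idempotent monads, so no additional diagram chase is needed beyond what you have already sketched.
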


In Sections~\ref{sect:ideal_transforms} and \ref{sec:saturate_linear_complex} we utilize this theorem to prove that certain functors are \nameft{Gabriel} monads of the category of coherent sheaves on the relative projective space $\PP_B^n$, and thus compute the (truncated) module of twisted global sections.
However, this theorem, abstract as it is, can be applied to categories of coherent sheaves of more general schemes.

\section{Graded modules over the free polynomial ring}\label{sec:modules}

For the rest of the paper let $B$ denote a \nameft{Noether}ian commutative ring with $1$, $V$ a free $B$-module of rank $n+1$, $W := V^* = \Hom_B(V,B)$ its $B$-dual, and $x_0,\ldots,x_n$ a free generating set of the $B$-module $W$.
Set
\[
  S := \Sym_B(W)=B[V]=B[x_0,\ldots,x_n]
\]
to be the free polynomial ring over $B$ in the $n+1$ indeterminates $x_0,\ldots,x_n$.
Setting $\deg(x_j)=1$ turns $S$ into a positively graded ring $S=\bigoplus_{i \geq 0} S_i$ where $S_i$ is the set of homogeneous polynomials of degree $i$ in $S$.
Define the irrelevant ideal
\[
  \m:=S_{>0}=\langle x_0,\ldots,x_n \rangle \lhd S \mbox{.}
\]
The isomorphism $B = S_0 \cong S/\m$ endows $B$ with a natural graded $S$-module structure.

To make the statements of this paper constructive, the ring $S$ needs to have a Gröbner bases algorithm.
This is the case if $B$ has effective coset representatives \cite[\S 4.3]{AL}, i.e., for every ideal $I\subset B$ we can determine a set $T$ of coset representatives of $B/I$, such that for every $b \in B$ we can compute a unique $t \in T$ with $b+I=t+I$.

We denote by $\Sfpmod$ the category of (non-graded) finitely presented  $S$-modules and by $\Sfpgrmod$ the category of finitely presented \emph{graded} $S$-modules.
Further we denote by
\[
  \Sfpgrmodd \subset \Sfpgrmod
\]
the full subcategory of all modules $M$ with $M = M_{\geq d}$.
Define the shift autoequivalence on $\Sfpgrmod$ by $M(i)_j := M_{i+j}$ for all $i\in\Z$; it induces an endofunctor on the subcategory $\Sfpgrmodd$ for $i\le0$.

\begin{algo} \label{algo:degree_i}
  We briefly describe how to compute the $i$-th homogeneous part of an $M \in \Sfpgrmod$:
  Such a module is realized on the computer as the cokernel of a graded free $S$-presentation
  \[
    M \stackrel{\pi}{\twoheadleftarrow} \bigoplus_k S(g_k) \stackrel{\mathtt{m}}\leftarrow \bigoplus_\ell S(r_\ell) \mbox{.}
  \]
  The image of the graded submodule $\bigoplus_{k, i + g_k \geq 0} S_{\geq i + g_k}(g_k) \leq \bigoplus_k S(g_k)$ under $\pi$ is the graded submodule $\langle M_i \rangle \leq M$, which we compute as the kernel of the cokernel of the restricted map $M \leftarrow \bigoplus_{k, i+g_k \geq 0} S_{\geq i + g_k}(g_k)$.
  Computing a free $S$-presentation $\langle M_i \rangle \twoheadleftarrow \bigoplus_k S(i) \stackrel{\mathtt{m}_i}{\leftarrow}  \bigoplus_\ell S(r_\ell')$ of $\langle M_i \rangle$ thus involves two successive syzygy computations as explained in \cite[(10) in the proof of Theorem~3.4]{BL}.
  To get a free $B$-presentation of $M_i$ we just need to tensor the last exact sequence with $B=S/\m$ over $S$, which corresponds to extracting the degree $0$ relations in the reduced \nameft{Gröbner} basis of the $S$-matrix of relations $\mathtt{m}_i$.
\end{algo}

\subsection{Internal and external \texorpdfstring{$\Hom$}{Hom} functors}

Let $M,N \in \Sfpgrmod$.
Then the $\Hom$-group $\Hom_\Sfpmod(M,N)$ of their underlying modules in $\Sfpmod$ is again naturally graded.
This induces internal $\Hom$ functors
\[
  \GradedHom:\Sfpgrmod^\mathrm{op}\times\Sfpgrmod\to\Sfpgrmod
\]
in the category $\Sfpgrmod$ and 
\[
  \Hom_{\geq d}:\Sfpgrmod_{\ge d}^\mathrm{op}\times\Sfpgrmodd\to\Sfpgrmodd
\]
in $\Sfpgrmodd$.
These internal $\Hom$ functors are algorithmically computable if $B$ has effective coset representatives (cf., e.g., \cite[\S 4.3]{AL} and \cite[§3.3]{BL}).

The (external) $\Hom$-groups of the category $\Sfpgrmod$ are finitely generated $B$-modules.
They can be recovered as the graded part of degree $0$ of the corresponding internal $\Hom$'s:
\begin{align*}
  \Hom(M,N) := \Hom_{\Sfpgrmod}(M,N) &\cong \GradedHom(M,N)_0 \mbox{,} \\
  \Hom_{\Sfpgrmodd}(M,N) &\cong \Hom_{\geq d}(M,N)_0 \quad \mbox{for $d \leq 0$.}
\end{align*}
In particular, $\Hom_{\Sfpgrmod}(S, M) \cong M_0$ and $\Hom_{\Sfpgrmodd}(S, M) \cong M_0$ for $d \leq 0$.

Dealing with $d>0$ would enforce further case distinctions.
For example, $B \cong S/\m$ lies in $\Sfpgrmodd$ only if $d \leq 0$.

\medskip
\framebox{
\begin{minipage}[c]{0.9\textwidth}
\begin{center}
  Till the end of Section~\ref{sect:ideal_transforms} we assume that $d \leq 0$.
\end{center}
\end{minipage}
}

\begin{rmrk} \label{rmrk:etal}
  Applying $\GradedHom(-,M)$ to the short exact sequence $S/\m^\ell \twoheadleftarrow S \hookleftarrow \m^\ell$ yields
  \begin{equation} \label{etal} \tag{$\eta^\ell_M$}
      \GradedHom(S/\m^\ell,M) \hookrightarrow M \xrightarrow{\eta^\ell_M} \GradedHom(\m^\ell,M) \twoheadrightarrow \Ext_\bullet^1(S/\m^\ell,M)
  \end{equation}
  as part of the long exact contravariant $\Ext_\bullet$-sequence.
  We will repeatedly refer to this exact sequence as well as to the $\ell=1$ case
  \begin{equation} \label{eta1} \tag{$\eta^1_M$}
      \GradedHom(B,M) \hookrightarrow M \xrightarrow{\eta^1_M} \GradedHom(\m,M) \twoheadrightarrow \Ext_\bullet^1(B,M) \mbox{.}
  \end{equation}
\end{rmrk}

\subsection{Quasi-zero modules}

Let $\Sfpgrmod^0$ denote the thick subcategory of \textbf{quasi-zero} modules, i.e., those with $M_{\geq \ell}=0$ for $\ell$ large enough.
Analogously, we denote by $\Sfpgrmod^0_{\geq d}$ the \emph{localizing} (cf.~Theorem~\ref{thm:Dm}) subcategory of quasi-zero modules in $\Sfpgrmodd$.

\begin{rmrk} \label{rmrk:TorExtQZ}
  For $M \in \Sfpgrmod$. Then for all $\ell \geq 0$
  \begin{enumerate}
    \item $\Tor^S_i(S/\m^\ell,M)_\bullet \in \Sfpgrmod^0\,$ for all $i \geq 0$. \label{rmrk:TorExtQZ.Tor}
    \item $\Ext_\bullet^j(S/\m^\ell,M) \in \Sfpgrmod^0\,$ for all $j \geq 0$. \label{rmrk:TorExtQZ.ExtofSmodm}
    \item $\Ext_\bullet^j(\m^\ell,M) \in \Sfpgrmod^0\,$ for all $j \geq 1$. \label{rmrk:TorExtQZ.Extofm}
  \end{enumerate}
\end{rmrk}
\begin{proof}
  The existence of a finitely generated free resolution of the first argument $S/\m^\ell$ (and hence of $\m^\ell$) implies that all the above derived modules lie in $\Sfpgrmod$.
  By applying $S/\m^\ell \otimes_S -$ to a projective resolution of $M$ and $\GradedHom(S/\m^\ell, -)$ to an injective resolution of $M$ shows that the ideal $\m^\ell \lhd S$ annihilates $\Tor^S_i(S/\m^\ell,M)_\bullet$ and $\Ext_\bullet^j(S/\m^\ell,M)$, which implies that they are also finitely generated $S/\m^\ell$-modules, proving \eqref{rmrk:TorExtQZ.Tor} and \eqref{rmrk:TorExtQZ.ExtofSmodm}.
  The existence of the connecting isomorphisms $\Ext_\bullet^j(\m^\ell,M) \cong \Ext_\bullet^{j+1}(S/\m^\ell,M)$ ($j \geq 1$) finally implies \eqref{rmrk:TorExtQZ.Extofm}.
\end{proof}

\begin{rmrk} \label{rmrk:Kleene}
  The use of the nonconstructive injective resolution in the previous proof is an example of an admissible use of nonconstructive arguments in an otherwise constructive setup to prove statements which neither involve existential quantifiers nor disjunctions (so-called negative formulae):
  $\Ext_\bullet(S/\m^\ell,M)$ has two descriptions.
  The nonconstructive one in the proof and the constructive one in which $\Hom(-,M)$ is applied to a finite free resolution of $S/\m^\ell$.
  Although the isomorphism between the two descriptions is not constructive it is ``good enough'' for transferring the property we want to establish.
\end{rmrk}

\subsection{Regularity, linear regularity, and relation to \texorpdfstring{$\Tor$}{Tor} and \texorpdfstring{$\Ext$}{Ext}}\label{subsec:tor}

For convenience of the reader we recall the definition of the \textbf{\nameft{Castelnuovo-Mumford} regularity} in the relative case from \cite[§2]{ES}.
For any quasi-zero graded $S$-module $N$ define
\[
  \reg N := \max \{d \in \Z \mid N_d \neq 0\} \mbox{.}
\]
The regularity of the zero module is set to $-\infty$.
Then, for $M \in \Sfpgrmod$ the $S$-module $\Tor^S_i(B,M)_\bullet$ is quasi-zero and
\[
  \reg M := \max \{ \reg \Tor^S_i(B,M)_\bullet - i \mid i=0,\ldots,n+1 \} \mbox{.}
\]

Equivalently, one can define
\[
  \reg M := \max \{ \reg H_\m^j(M) + j \mid j=0,\ldots,n+1 \}
\]
using the local cohomology modules $H^j_\m(M) = \varinjlim_\ell \Ext_\bullet^j(S / \m^\ell, M)$ (cf., e.g., \cite[Prop.~2.1]{ES}).\footnote{This definition clarifies the relation to two other regularity notions: The \textbf{geometric regularity} is defined by $\operatorname{g-reg} M := \max \{ \reg H_\m^j(M) + j \mid j=\mathbf{1},\ldots,n+1 \}$ and the \textbf{regularity of the sheafification} $\reg \widetilde{M} := \max \{ \reg H_\m^j(M) + j \mid j=\mathbf{2},\ldots,n+1 \}$.}
In fact only $\ell=1$ in this sequential colimit is relevant for us.
To see this we need the following result, which we also use in the proof of our key Lemma~\ref{lma:HRR_Ext}.
\begin{lemm} \label{lemm:Betti=Bass}
  There exists a natural isomorphism
  \[
    \Tor^S_i(B,M)_\bullet \cong \Ext_\bullet^{n+1-i}(\wedge^{n+1}V, M) \mbox{.}
  \]
\end{lemm}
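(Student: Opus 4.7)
The plan is to resolve $B = S/\m$ by the Koszul complex on the regular sequence $x_0,\ldots,x_n \in W \subset S_1$ and to exploit the self-duality of the Koszul complex via the top exterior power. Let $K_\bullet$ denote this Koszul complex, with $K_i := \wedge^i W \otimes_B S$ normalized so that its free generators sit in internal degree $i$, i.e.\ $K_i \cong S(-i)^{\binom{n+1}{i}}$. Since $x_0,\ldots,x_n$ is a regular sequence, $K_\bullet$ is a finite graded free $S$-resolution of $B$, whence
\[
  \Tor^S_i(B,M)_\bullet \;=\; H_i(K_\bullet \otimes_S M) \;=\; H_i\bigl(\wedge^\bullet W \otimes_B M\bigr),
\]
the $i$-th Koszul homology of $M$.

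Because $\wedge^{n+1}V$ is a free $B$-module of rank one on which $S$ acts through $S/\m = B$, tensoring with $\wedge^{n+1}V$ over $B$ turns $K_\bullet$ into a finite graded free $S$-resolution $K_\bullet \otimes_B \wedge^{n+1}V$ of $\wedge^{n+1}V$ (the grading shift is absorbed because $\wedge^{n+1}V$ is homogeneous of internal degree $-(n+1)$). The perfect pairing $\wedge^i W \otimes_B \wedge^{n+1-i}W \to \wedge^{n+1}W$, combined with the canonical identification $\wedge^{n+1}V \otimes_B \wedge^{n+1}W \cong B$, yields natural isomorphisms of graded $B$-modules
\[
  \Hom_B\bigl(\wedge^i W \otimes_B \wedge^{n+1}V,\,M\bigr) \;\cong\; \wedge^{n+1-i}W \otimes_B M,
\]
and hence, passing to $\GradedHom$'s over $S$,
\[
  \GradedHom\bigl(K_\bullet \otimes_B \wedge^{n+1}V,\,M\bigr) \;\cong\; \wedge^{n+1-\bullet}W \otimes_B M.
\]
A direct calculation shows that under this identification the transposed Koszul differential is carried, up to the standard Koszul sign, to the Koszul differential of $\wedge^\bullet W \otimes_B M$, but with cohomological index $j$ matching homological index $n+1-j$. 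Taking cohomology at $j := n+1-i$ therefore produces
\[
  \GradedExt^{n+1-i}(\wedge^{n+1}V,\,M) \;\cong\; H^{n+1-i}\bigl(\wedge^{n+1-\bullet}W \otimes_B M\bigr) \;=\; H_i\bigl(\wedge^\bullet W \otimes_B M\bigr) \;\cong\; \Tor^S_i(B,M)_\bullet,
\]
with all intermediate isomorphisms natural in $M$.

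The main obstacle is the bookkeeping of internal gradings and Koszul signs. The precise placement of $\wedge^{n+1}V$ in internal degree $-(n+1)$ is exactly what makes the degree shifts coming from $K_i \cong S(-i)^{\binom{n+1}{i}}$ cancel, so that the claimed isomorphism holds without any external twist; had we taken $B$ in place of $\wedge^{n+1}V$, an explicit shift $(n+1)$ would reappear. Verifying that the transposed Koszul codifferential is faithfully identified with the Koszul differential on $\wedge^{n+1-\bullet}W \otimes_B M$ is the classical Koszul self-duality transported to the relative graded setting over $B$; this step is routine but requires careful handling of signs arising from the contraction against a chosen generator of $\wedge^{n+1}W$.
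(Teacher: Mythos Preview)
Your argument is correct and complete; it is the classical Koszul self-duality argument carried out in the relative graded setting, and your handling of the internal degrees (in particular the role of $\wedge^{n+1}V$ sitting in degree $-(n+1)$ so that no stray twist appears) is accurate.

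The paper proceeds differently: it invokes the $\Tor$-$\Ext$ spectral sequence
\[
  \Tor^S_{-p}\bigl(\Ext_\bullet^q(\wedge^{n+1}V,S),\,M\bigr)_\bullet \;\Rightarrow\; \Ext_\bullet^{p+q}(\wedge^{n+1}V,M),
\]
which collapses because $\Ext_\bullet^q(\wedge^{n+1}V,S)$ vanishes for $q \neq n+1$ and equals $B$ for $q = n+1$. This one-line argument hides exactly the computation you spell out: the identification $\Ext_\bullet^{n+1}(\wedge^{n+1}V,S) \cong B$ is precisely Koszul self-duality applied to $M = S$, and the spectral sequence then transports it to arbitrary $M$. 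Your approach is more elementary in that it avoids spectral-sequence machinery, at the cost of the sign and degree bookkeeping you flag in your final paragraph; the paper's approach is terser and makes naturality automatic, but presupposes the change-of-rings spectral sequence. Either route is perfectly acceptable here.
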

\begin{proof}
  The $\Tor$-$\Ext$ spectral sequence $\Tor^S_{-p}(\Ext_\bullet^q(\wedge^{n+1}V, S), M)_\bullet \Rightarrow \Ext_\bullet^{p+q}(\wedge^{n+1} V, M)$ collapses since $\Ext_\bullet^q(\wedge^{n+1}V,S)=0$ for $q\not=n+1$ and $\Ext_\bullet^{n+1}(\wedge^{n+1}V,S)=B$.
\end{proof}
When $B=k$ is a field this Lemma becomes the intrinsic and rather generalizable form of the equality between the graded \nameft{Betti} numbers $\beta_{ij} := \dim_k \Tor^S_i(B,M)_j$ and the graded \textbf{\nameft{Bass} numbers}:
\[
  \mu_{n+1-i,j-n-1} := \dim_k \Ext_\bullet^{n+1-i}(\wedge^{n+1}V,M)_j \mbox{.}
\]

\begin{rmrk}
Lemma~\ref{lemm:Betti=Bass} and the noncanonical isomorphism $\wedge^{n+1}V \cong B(n+1)$ yield
\[
  \reg M = \max \{ \reg \Ext_\bullet^j(B, M) +j \mid j=0,\ldots,n+1 \} \mbox{.}
\]
\end{rmrk}

The value of the following definition will start to become obvious in Proposition~\ref{prop:saturation} in the next subsection.
\begin{defn}\label{defn:lin_reg}
  Define the \textbf{linear regularity} of $M \in \Sfpgrmod$ to be
  \[
    \linreg M = \max \{\reg \Ext_\bullet^j(B, M) \mid j=0,1 \} \in \Z \cup \{-\infty\} \mbox{.}
  \]
  Analogously, the \textbf{$d$-th truncated linear regularity} of $M \in \Sfpgrmod_{\geq d}$ is defined by
  \[
    \linregd M = \max \{\reg \Ext_{\geq d}^j(B, M) \mid j=0,1 \} \in \Z_{\geq d} \cup \{-\infty\} \mbox{,}
  \]
  for $d \leq 0$ where $\Ext_{\geq d}^j := \Ext_{\Sfpgrmodd}^j \simeq (\Ext_\bullet^j)_{\geq d}$.
\end{defn}

Note that $\linreg = \reg$ on $\Sfpgrmod^0$ and $\linreg \leq \reg$ on $\Sfpgrmod$.
\begin{exmp}
    $\linreg S/\m^{\ell+1} = \reg S/\m^{\ell+1} = \ell = \linreg \m^{\ell + 1} < \reg \m^{\ell + 1} = \ell + 1$.
\end{exmp}

The motivation behind introducing $\linreg$ is that it offers an upper bound in the saturation algorithms discussed below, where the use of the (often enough much larger) regularity would be a waste of computational resources.

\subsection{Saturated modules}

The equivalent conditions \eqref{prop:saturation.A} and \eqref{prop:saturation.S-m} in the following proposition are computationally effective characterizations of saturated modules.

\begin{prop} \label{prop:saturation}
  For $M \in \Sfpgrmod$ the following are equivalent:
  \begin{enumerate}
    \item $M$ is saturated w.r.t.\  $\Sfpgrmod^0$; \label{prop:saturation.sat}
    \item $\Ext_\bullet^0(S / \m^\ell,M) = 0$ and $\Ext_\bullet^1(S / \m^\ell,M) = 0$ for all $\ell \geq 0$; \label{prop:saturation.Sml}
    \item The natural map $\eta^\ell_M := \GradedHom(S \hookleftarrow \m^\ell, M): M \to \GradedHom(\m^\ell,M)$ is an isomorphism for all $\ell \geq 0$; \label{prop:saturation.S-ml}
    \item $\Ext_\bullet^0(B,M) = 0$ and $\Ext_\bullet^1(B,M) = 0$;\footnote{%
      Conditions \eqref{prop:saturation.A} and \eqref{prop:saturation.S-m} are in their use of \nameft{Gröbner} bases algorithmically equivalent.
      Computing them only involves the first two morphisms in the \nameft{Koszul} resolution of $B$ (and then tensoring their duals with $M$).
      One might be tempted to expect that \eqref{prop:saturation.A} is always algorithmically superior to condition \eqref{prop:saturation.Tor}, which seem to involve an $n+1$-term resolution of either $B$ or of $M$.
      However, one can easily construct examples of $M \in \Sfpgrmod$, where condition \eqref{prop:saturation.Tor} is algorithmically superior, e.g., if the resolution of $M$ terminates after few steps, long before reaching step $n$.
      } \label{prop:saturation.A}
    \item The natural map $\eta^1_M := \GradedHom(S \hookleftarrow \m, M): M \to \GradedHom(\m,M)$ is an isomorphism; \label{prop:saturation.S-m}
    \item $\Tor^S_{n+1}(B,M)_\bullet = 0$ and $\Tor^S_n(B,M)_\bullet = 0$; \label{prop:saturation.Tor}
    \item $\linreg M = -\infty$. \label{prop:saturation.linreg}
  \suspend{enumerate}
  And if the base ring $B$ is a field the above is also equivalent to:
  \resume{enumerate}
    \item The projective dimension $\operatorname{pd} M \leq n-1$. \label{prop:saturation.pd}
  \end{enumerate}
\end{prop}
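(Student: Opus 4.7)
The plan is to organize the equivalences into three self-contained groups that exploit, respectively, the long exact Ext sequence attached to $0 \to \m^\ell \to S \to S/\m^\ell \to 0$, a Noetherian induction up the powers of $\m$, and the duality provided by Lemma~\ref{lemm:Betti=Bass}.

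The \emph{first group} comprises (b)$\Leftrightarrow$(c), (d)$\Leftrightarrow$(e), and (d)$\Leftrightarrow$(g). Applying $\Hom_\bullet(-, M)$ to $0 \to \m^\ell \hookrightarrow S \twoheadrightarrow S/\m^\ell \to 0$, and using $\Hom_\bullet(S, M) \cong M$ together with $\Ext_\bullet^1(S, M) = 0$ (projectivity of $S$), one reads off $\ker \eta^\ell_M = \Ext_\bullet^0(S/\m^\ell, M)$ and $\coker \eta^\ell_M = \Ext_\bullet^1(S/\m^\ell, M)$. This immediately yields (b)$\Leftrightarrow$(c) and, specialized to $\ell = 1$, also (d)$\Leftrightarrow$(e). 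The equivalence (d)$\Leftrightarrow$(g) is an immediate unpacking of the definition of $\linreg$ together with the fact that $\reg N = -\infty$ precisely when $N = 0$.

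The \emph{second group} is the passage between (d) and (b) together with the completion (a)$\Leftrightarrow$(b). For (d)$\Rightarrow$(b) I would induct on $\ell$ using the short exact sequence
\[
  0 \to \m^{\ell-1}/\m^\ell \to S/\m^\ell \to S/\m^{\ell-1} \to 0\text{.}
\]
The left-hand term is annihilated by $\m$ and is a graded-free $B$-module (with basis the monomials of degree $\ell - 1$), hence a finite direct sum of shifted copies of $B$. Condition (d) therefore kills $\Ext_\bullet^i(\m^{\ell-1}/\m^\ell, M)$ for $i = 0, 1$, and the long exact Ext sequence sandwiches the corresponding vanishing for $S/\m^\ell$ between this input and the inductive hypothesis on $S/\m^{\ell-1}$. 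For (b)$\Rightarrow$(a), one first observes that any finitely generated quasi-zero $C$ is annihilated by some $\m^\ell$ and hence admits a surjection $(S/\m^\ell)^k \twoheadrightarrow C$. The injection $\Hom_\bullet(C, M) \hookrightarrow \Hom_\bullet((S/\m^\ell)^k, M) = 0$ upgrades (b) to $\Hom_\bullet(C, M) = 0$ for every quasi-zero $C$, including in particular the kernel $K$ of the surjection. Feeding this back into the long exact Ext sequence pinches $\Ext_\bullet^1(C, M)$ between $\Hom_\bullet(K, M) = 0$ and $\Ext_\bullet^1((S/\m^\ell)^k, M) = 0$. The reverse implication (a)$\Rightarrow$(b) is trivial since $S/\m^\ell \in \Sfpgrmod^0$.

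The \emph{third group} handles (d)$\Leftrightarrow$(f) and, under the extra hypothesis, (f)$\Leftrightarrow$(h). Lemma~\ref{lemm:Betti=Bass} supplies a natural isomorphism $\Tor^S_i(B, M)_\bullet \cong \Ext_\bullet^{n+1-i}(\wedge^{n+1} V, M)$, and $\wedge^{n+1} V$ is a rank-one free $B$-module concentrated in a single degree, hence isomorphic to a shift of $B$ as a graded $S$-module; vanishing on both sides corresponds under this shift, yielding (d)$\Leftrightarrow$(f). When $B$ is a field, $M$ admits a minimal finite graded free resolution of length $\operatorname{pd} M$ whose $i$-th Betti number $\dim_B \Tor^S_i(B, M)_\bullet$ is nonzero exactly up to $i = \operatorname{pd} M$, so (f) is equivalent to $\operatorname{pd} M \leq n - 1$. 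The main obstacle I anticipate is the bookkeeping in the second group: the induction (d)$\Rightarrow$(b) must simultaneously propagate $\Ext_\bullet^0$- and $\Ext_\bullet^1$-vanishing through the sandwich, and the bootstrap (b)$\Rightarrow$(a) only closes because one can first secure $\Hom_\bullet$-vanishing for \emph{all} quasi-zero modules (in particular the auxiliary kernel $K$) before attacking $\Ext_\bullet^1$-vanishing, turning an otherwise circular reduction into a legitimate two-step argument.
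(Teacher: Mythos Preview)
Your argument is correct and largely parallels the paper's. The first and third groups match the paper almost verbatim (long exact $\Ext$-sequence for (b)$\Leftrightarrow$(c) and (d)$\Leftrightarrow$(e); Definition~\ref{defn:lin_reg} for (d)$\Leftrightarrow$(g); Lemma~\ref{lemm:Betti=Bass} for (d)$\Leftrightarrow$(f); minimal free resolution for (f)$\Leftrightarrow$(h)). Your (b)$\Rightarrow$(a) bootstrap is also the paper's argument, with one cosmetic slip: in the graded category the surjection onto a quasi-zero $C$ must be of the form $\bigoplus_i (S/\m^\ell)(b_i) \twoheadrightarrow C$ with shifts $b_i$, not $(S/\m^\ell)^k$; the rest of the argument is unaffected since shifts do not disturb $\Ext$-vanishing.

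The one genuine divergence is how you climb from a single power of $\m$ to all powers. You prove (d)$\Rightarrow$(b) by induction on $\ell$ via the layer sequence $0 \to \m^{\ell-1}/\m^\ell \to S/\m^\ell \to S/\m^{\ell-1} \to 0$, using that the associated graded piece is a finite sum of shifted copies of $B$. The paper instead proves (e)$\Rightarrow$(c) directly: it iterates $\Hom_\bullet(S \hookleftarrow \m,-)$ $\ell$ times, uses the $\otimes$--$\Hom$ adjunction to rewrite this as $\Hom_\bullet(\otimes^\ell \m \hookrightarrow S, M)$, and then invokes Remark~\ref{rmrk:m.tensor} (that $\otimes^\ell \m \twoheadrightarrow \m^\ell$ has kernel concentrated in a single degree) to conclude that $M \to \Hom_\bullet(\m^\ell,M)$ is an isomorphism. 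Your filtration argument is more elementary and self-contained; the paper's tensor-power route is slicker once Remark~\ref{rmrk:m.tensor} is in hand and ties in with its later use in Remark~\ref{rmrk:Dm}\eqref{rmrk:Dm.power}, so the paper gets extra mileage from the same lemma.
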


In the proof of this proposition, we use the following simple remark.
\begin{rmrk}\label{rmrk:m.tensor}
  The kernel $K$ of the epimorphism $\m^\ell \twoheadleftarrow \otimes^\ell \m$ is concentrated in degree $\ell$.
  To see this note that any homogeneous element in $\otimes^\ell \m$ of degree $m > \ell$ which is the tensor product of monomials can be brought to the normal form $x_{i_1} \otimes_B \cdots \otimes_B x_{i_{\ell-1}} \otimes_B x^\mu$ with $i_1 \leq \cdots \leq i_{m-1} \leq \min\{ i \mid \mu_i \neq 0 \}$ and $|\mu| = m-\ell+1$.
  This kernel $K$ is free over $B$ of rank $(n+1)^\ell - \binom{n+\ell}{n}$ as the kernel of the $B$-epimorphism $\Sym^\ell W \twoheadleftarrow \otimes^\ell W$.
\end{rmrk}

\begin{proof}[Proof of Proposition~\ref{prop:saturation}]
  \item[\eqref{prop:saturation.Sml} $\Leftrightarrow$ \eqref{prop:saturation.S-ml}:]
    The claim is obvious from the \eqref{etal}-sequence in Remark~\ref{rmrk:etal}.
  \item[\eqref{prop:saturation.A} $\Leftrightarrow$ \eqref{prop:saturation.S-m}:]
    This is a special case of the equivalence \eqref{prop:saturation.Sml} $\Leftrightarrow$ \eqref{prop:saturation.S-ml} for $\ell = 1$.
  \item[\eqref{prop:saturation.A} $\Leftrightarrow$ \eqref{prop:saturation.Tor}:]
    This is the statement of Lemma~\ref{lemm:Betti=Bass} for $i=n+1$ and $i=n$.
  \item[\eqref{prop:saturation.A} $\Leftrightarrow$ \eqref{prop:saturation.linreg}:]
    By definition of $\linreg$.
  \item[\eqref{prop:saturation.sat} $\Rightarrow$ \eqref{prop:saturation.A}:]
    This follows directly from the definition of saturated objects (cf.~Section~\ref{sec:cat}), as $B\in\C=\Sfpgrmod^0$.
  \item[\eqref{prop:saturation.S-m} $\Rightarrow$ \eqref{prop:saturation.S-ml}:]
    Applying the $\ell$-th power of $\GradedHom(S \hookleftarrow \m,-)$ to $M$ and taking the diagonal in the $\ell$-dimensional cube yields the isomorphism
    \[
      \phi := M \xrightarrow{\sim} \GradedHom(\otimes^\ell \m, M)
    \]
    by the adjunction between $\otimes$ and $\GradedHom$.
    This isomorphism can be written as the composition
    \[
      \GradedHom(S \hookleftarrow \m^\ell \twoheadleftarrow \otimes^\ell \m,M)
      =
      \left( M \xrightarrow{\psi} \GradedHom(\m^\ell,M) \xrightarrow{\chi} \GradedHom(\otimes^\ell \m,M) \right) \mbox{.}
    \]
    The homomorphism $\chi$ is a monomorphism since $\GradedHom$ is left exact and an epimorphism since $\chi\circ\psi=\phi$ is an isomorphism.
    Hence, $\chi$ is isomorphism and thus $\psi$ is an isomorphism.
  \item[\eqref{prop:saturation.Sml} $\Rightarrow$ \eqref{prop:saturation.sat}:]
    It is clear that any $N \in \Sfpgrmod^0$ is an epimorphic image of $\bigoplus_{i\in I} (S/\m^{a_i})(b_i)$ for a finite set $I$ and suitable $a_i$ and $b_i$.
    Denote the kernel of $N \twoheadleftarrow \bigoplus_{i} (S/\m^{a_i})(b_i)$ by $K$.
    Applying $\GradedHom(-,M)$ to $N \twoheadleftarrow \bigoplus_{i} (S/\m^{a_i})(b_i) \hookleftarrow K$ yields as parts of the long exact sequence
    \[
      \GradedHom(N,M)
      \hookrightarrow
      \underbrace{\GradedHom\left(\bigoplus_{i} (S/\m^{a_i})(b_i),M\right)}_{\cong 0} \mbox{,}
    \]
    and
    \[
      \GradedHom(K,M)
      \to
      \Ext_\bullet^1(N,M)
      \to
      \underbrace{\Ext_\bullet^1 \left(\bigoplus_{i} (S/\m^{a_i})(b_i),M\right)}_{\cong 0} \mbox{.}
    \]
    The first part implies that $\GradedHom(-,M) = 0$ on $\Sfpgrmod^0$.
    In particular $\GradedHom(K,M)=0$ since $K \in \Sfpgrmod^0$.
    Combining this and the second part implies that $\Ext_\bullet^1(-,M)$ vanishes on $\Sfpgrmod^0$.
  \item [\eqref{prop:saturation.Tor} $\Leftrightarrow$ \eqref{prop:saturation.pd}:]
    If $B$ is a field then there exists a finite free (and not merely relatively free) presentation $M \twoheadleftarrow F_\bullet$ with $\Tor^S_i(B,M)_\bullet$ isomorphic to the head of $F_i$.
\end{proof}

\begin{coro}\label{coro:saturation_0}
  For $M \in \Sfpgrmodd$ the following are equivalent (recall, $d \leq 0$):
  \begin{enumerate}
    \item $M$ is saturated w.r.t.\  $\Sfpgrmodd^0$; \label{coro:saturation_0.sat}
    \item $\Ext_{\geq d}^0(S/\m^\ell,M) = 0$ and $\Ext_{\geq d}^1(S/\m^\ell,M) = 0$ for all $\ell \geq 0$; \label{coro:saturation_0.Sml}
    \item The natural map $\eta^\ell_M := \Hom_{\geq d}\left( S \hookleftarrow \m^\ell, M \right) : M \to \Hom_{\geq d}(\m^\ell,M)$ is an isomorphism for all $\ell \geq 0$; \label{coro:saturation_0.S-ml}
    \item $\Ext_{\geq d}^0(B,M) = 0$ and $\Ext_{\geq d}^1(B,M) = 0$; \label{coro:saturation_0.A}
    \item The natural map $\eta^1_M := \Hom_{\geq d}\left( S \hookleftarrow \m, M \right) : M \to \Hom_{\geq d}(\m,M)$ is an isomorphism; \label{coro:saturation_0.S-m}
    \item $\Tor^S_{n+1}(B(n+1),M)_{\geq d} = 0$ and $\Tor^S_n(B(n+1),M)_{\geq d} = 0$; \label{prop:saturation_0.Tor}
    \item $\linregd M = -\infty$. \label{coro:saturation_0.linreg}
  \end{enumerate}
\end{coro}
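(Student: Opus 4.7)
The plan is to prove Corollary~\ref{coro:saturation_0} by transferring Proposition~\ref{prop:saturation} to the truncated setting $\Sfpgrmodd$, exploiting that when $d \leq 0$ very little truly changes. The first observation is that truncation is largely harmless: for $M \in \Sfpgrmodd$, $N \in \{B,\,S/\m^\ell,\,\m^\ell\}$, and $j \in \{0,1\}$, the groups $\Ext_\bullet^j(N,M)$ already live in degrees $\geq d$, so $\Ext_{\geq d}^j(N,M) = \Ext_\bullet^j(N,M)$. For $j=0$ this is immediate (a morphism $N \to M$ of sufficiently negative degree would send a homogeneous generator of $N$ into a vanishing part of $M$), and for $j=1$ the same reasoning, applied to a finite free resolution of $N$ whose first syzygy module is generated in nonnegative degrees, yields the conclusion. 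Consequently, conditions (b), (c), (d), (e), (f), (g) of Corollary~\ref{coro:saturation_0} read identically to those of Proposition~\ref{prop:saturation} once interpreted inside $\Sfpgrmodd$. The shift $B(n+1)$ in (f), rather than plain $B$, is precisely the one dictated by Lemma~\ref{lemm:Betti=Bass} together with $\wedge^{n+1}V \cong B(n+1)$: one has $\Tor^S_i(B(n+1),M)_\bullet \cong \Ext_\bullet^{n+1-i}(B,M)$, so the degree-$d$ cutoff matches on both sides.

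With these observations in place, most equivalences of the corollary transfer verbatim from Proposition~\ref{prop:saturation}: (b)$\Leftrightarrow$(c) and (d)$\Leftrightarrow$(e) come from the \eqref{etal} and \eqref{eta1} sequences of Remark~\ref{rmrk:etal}, which remain exact inside $\Sfpgrmodd$ since $S$, $\m^\ell$, $S/\m^\ell$ all lie there; (d)$\Leftrightarrow$(g) is the definition of $\linregd$; (d)$\Leftrightarrow$(f) is the shifted Lemma~\ref{lemm:Betti=Bass} argument described above; (e)$\Rightarrow$(c) copies the $\otimes_S \dashv \Hom_{\geq d}$ argument of Proposition~\ref{prop:saturation}, using Remark~\ref{rmrk:m.tensor} together with the fact that $\otimes^\ell \m \in \Sfpgrmodd$; and (a)$\Rightarrow$(d) is immediate from $B \in \Sfpgrmodd^0$.

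The single step that requires genuine care is (b)$\Rightarrow$(a). Following Proposition~\ref{prop:saturation}, one covers an arbitrary $N \in \Sfpgrmodd^0$ by a finite direct sum $\bigoplus_{i}(S/\m^{a_i})(b_i) \twoheadrightarrow N$, but now the cover itself must lie inside $\Sfpgrmodd$. Since $N$ is concentrated in degrees $\geq d$, one picks generators of $N$ in degrees $e_i \geq d$ and sets $b_i := -e_i \leq -d \leq 0$; then $(S/\m^{a_i})(b_i) \in \Sfpgrmodd$, since its nonzero degrees are $e_i, e_i+1, \ldots, e_i + a_i - 1$, all $\geq d$. The long exact sequences of $\Hom_{\geq d}$ and $\Ext_{\geq d}^1$ attached to $N \twoheadleftarrow \bigoplus_{i}(S/\m^{a_i})(b_i) \hookleftarrow K$, combined with hypothesis (b), then force $\Hom_{\geq d}(-,M) = 0$ and $\Ext_{\geq d}^1(-,M) = 0$ on all of $\Sfpgrmodd^0$, which is (a). The main obstacle is thus essentially bookkeeping---verifying that every cover, every long exact sequence, and every adjunction respects the $\geq d$ truncation---and once the standing assumption $d \leq 0$ is invoked, the proof is a faithful transcription of Proposition~\ref{prop:saturation}.
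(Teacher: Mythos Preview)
The paper offers no separate proof of this corollary, treating it as the evident truncation of Proposition~\ref{prop:saturation}; your approach of rerunning each implication inside $\Sfpgrmodd$ is exactly what is expected, and your direct verifications of the individual equivalences---via the truncated $(\eta^\ell_M)$-sequence, the definition of $\linregd$, Lemma~\ref{lemm:Betti=Bass} shifted by $(n{+}1)$, the $\otimes$--$\Hom_{\geq d}$ adjunction, and the covering argument with $b_i=-e_i$---are all correct.

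There is, however, one false step in your exposition. Your opening claim that $\Ext_\bullet^1(N,M)$ already lives in degrees $\geq d$ whenever $M \in \Sfpgrmodd$ and $N \in \{B,\,S/\m^\ell,\,\m^\ell\}$ is wrong: the first syzygy of $B$ being generated in degree~$1$ only forces $\Ext_\bullet^1(B,M)$ into degrees $\geq d-1$, not $\geq d$. Concretely, take $n=0$, $d=0$, $M=S$: then $\Ext_\bullet^1(B,S) \cong B(1)$ is concentrated in degree $-1$, so $\Ext_\bullet^1(B,S) \neq 0$ while $\Ext_{\geq 0}^1(B,S)=0$. In this example $S$ is $\Sfpgrmod_{\geq 0}^0$-saturated but not $\Sfpgrmod^0$-saturated, so the corollary's condition~(d) is genuinely weaker than the proposition's. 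Fortunately this observation is not load-bearing: your subsequent arguments nowhere use it, so if you simply delete the sentence ``Consequently, conditions (b)--(g) \ldots\ read identically'' and rely solely on the direct verifications, the proof stands.
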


\section{Ideal transforms}\label{sect:ideal_transforms}

Recall, the \textbf{$\m$-transform} of $M \in \Sfpgrmod$ is the (not necessarily finitely generated) graded $S$-module defined by the sequential colimit
\[
  D_\m := \varinjlim_\ell\GradedHom(\m^\ell,-):\Sfpgrmod\to\SfpgrMod.
\]

On $\Sfpgrmodd$ the \textbf{$d$-truncated $\m$-transform}  (recall, $d \leq 0$)
\[
  D_{\m,\geq d} := \varinjlim_\ell \Hom_{\geq d}(\m^\ell,-):\Sfpgrmodd\to\Sfpgrmodd
\]
is an endofunctor.
This is a simple corollary of the Lemma~\ref{lemm:Dm_colimit} below.

\begin{defn} \label{defn:saturation_interval}
  We define the \textbf{saturation interval} of $M \in \Sfpgrmodd$ to be
  \[
    I_{\geq d}(M) := [\delta^0_{M,d}, \delta^1_{M,d}] \cap \Z \subset \Z_{\geq 0} \mbox{,}
  \]
  where $\delta^0_{M,d} := \max \{ \reg \Hom_{\geq d}(B,M) - d + 1, 0 \}$ and $\delta^1_{M,d} := \max \{ \linregd - d + 1, 0 \}$.
\end{defn}

The saturation interval plays a role in the following convergence analysis and the definition of its upper bound is a further motivation for the linear regularity.
\begin{lemm} \label{lemm:Dm_colimit}
  For each $M \in \Sfpgrmodd$ the sequential colimit defining the $\m$-transform is finite.
  More precisely, there exists a nonnegative integer $\delta_{M,d} \in I_{\geq d}(M)$ such that the induced maps $\Hom_{\geq d}(\m^\ell, M) \to \Hom_{\geq d}(\m^{\ell+1},M)$ are isomorphisms for all $\ell \geq t$ iff $t \geq \delta_{M,d}$.
  In particular, the natural map
  \[
    \Hom_{\geq d}(\m^\ell,M) \to D_{\m,\geq d}(M)
  \]
  is an isomorphism iff $\ell \geq \delta_{M,d}$.
\end{lemm}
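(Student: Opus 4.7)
The plan is to analyse the transition maps $\phi_\ell\colon\Hom_{\geq d}(\m^\ell,M)\to\Hom_{\geq d}(\m^{\ell+1},M)$ by applying $\Hom_{\geq d}(-,M)$ to the short exact sequence
$0\to\m^{\ell+1}\to\m^\ell\to\m^\ell/\m^{\ell+1}\to 0$.
Since $\m^\ell/\m^{\ell+1}$ is $\m$-annihilated, free over $B$ of rank $\binom{n+\ell}{n}$, and concentrated in degree $\ell$, it is isomorphic to $B(-\ell)^{\binom{n+\ell}{n}}$ as a graded $S$-module, so $\Ext_{\geq d}^j(\m^\ell/\m^{\ell+1},M)\cong\bigl(\Ext_\bullet^j(B,M)(\ell)\bigr)_{\geq d}^{\binom{n+\ell}{n}}$ for $j=0,1$. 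The four-term long exact sequence
\[
0\to\Hom_{\geq d}(\m^\ell/\m^{\ell+1},M)\to\Hom_{\geq d}(\m^\ell,M)\xrightarrow{\phi_\ell}\Hom_{\geq d}(\m^{\ell+1},M)\to\Ext_{\geq d}^1(\m^\ell/\m^{\ell+1},M)
\]
then reduces the study of $\phi_\ell$ to reading off regularities.

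A degree count shows $\Hom_{\geq d}(\m^\ell/\m^{\ell+1},M)$ vanishes iff $\Hom_\bullet(B,M)_{j'}=0$ for every $j'\geq d+\ell$, which unpacks to $\ell\geq\delta^0_{M,d}$; the analogous bound for $\Ext_{\geq d}^1$ is $\ell\geq\max\{\reg\Ext_{\geq d}^1(B,M)-d+1,\,0\}$. Together, $\phi_\ell$ is an isomorphism for every $\ell\geq\delta^1_{M,d}$, which proves that the sequential colimit stabilises at a finite stage and that $D_{\m,\geq d}(M)\in\Sfpgrmodd$. I then define $\delta_{M,d}$ as the smallest $t\geq 0$ for which the canonical map $\Hom_{\geq d}(\m^t,M)\to D_{\m,\geq d}(M)$ is an isomorphism; the previous argument gives $\delta_{M,d}\leq\delta^1_{M,d}$. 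For the lower bound $\delta_{M,d}\geq\delta^0_{M,d}$: if $\ell<\delta^0_{M,d}$, the nonzero $\ker\phi_\ell$ is killed already at the next stage and hence lies in the kernel of $\Hom_{\geq d}(\m^\ell,M)\to D_{\m,\geq d}(M)$, so this canonical map cannot be injective. This places $\delta_{M,d}$ in $I_{\geq d}(M)$.

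It remains to couple the two iff-statements through this single $\delta_{M,d}$. If $\phi_\ell$ is iso for every $\ell\geq t$, then the directed system is constant from stage $t$, so $\Hom_{\geq d}(\m^t,M)\to D_{\m,\geq d}(M)$ is an isomorphism and $t\geq\delta_{M,d}$. Conversely, for $t\geq\delta_{M,d}$ I induct on $\ell\geq t$ to show that the canonical map at stage $\ell+1$ is again iso: the crucial input is that $\phi_{\ell'}$ is injective for every $\ell'\geq\delta^0_{M,d}$, which prevents any element of $\Hom_{\geq d}(\m^{\ell+1},M)$ from dying in the colimit, while surjectivity of the colimit map at stage $\ell+1$ is inherited from surjectivity at stage $\ell$ via $\phi_\ell$. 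Composing, $\phi_\ell$ equals the composite of two isomorphisms and is itself iso. The ``In particular'' clause is then merely a restatement of the defining property of $\delta_{M,d}$. I expect the main obstacle to be precisely this final coupling of the two iff-statements into a single threshold, which rests on the injectivity of $\phi_\ell$ past $\delta^0_{M,d}$ ruling out the pathological scenario of elements that are born but later die in the colimit; everything else is bookkeeping with regularities and the long exact $\Ext_{\geq d}$-sequence.
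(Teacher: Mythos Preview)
Your proof is correct and follows essentially the same approach as the paper: both apply $\Hom_{\geq d}(-,M)$ to the short exact sequence $0\to\m^{\ell+1}\to\m^\ell\to\m^\ell/\m^{\ell+1}\to 0$, identify $\m^\ell/\m^{\ell+1}\cong B(-\ell)^{\oplus ?}$, and read off the (non)vanishing of $\ker\phi_\ell$ and $\coker\phi_\ell$ from the regularities of $\Hom_\bullet(B,M)$ and $\Ext_\bullet^1(B,M)$. The paper's proof is quite terse and does not spell out the lower bound $\delta_{M,d}\geq\delta^0_{M,d}$ or the coupling of the two iff-statements; your careful treatment of these points (via eventual injectivity of the $\phi_\ell$ past $\delta^0_{M,d}$ and the induction propagating the isomorphism $c_\ell\cong c_{\ell+1}$) fills in exactly what the paper leaves implicit.
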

\begin{proof}
  The short exact sequence $B(-\ell)^{\oplus ?} \cong \m^\ell/\m^{\ell+1} \twoheadleftarrow \m^\ell \hookleftarrow \m^{\ell+1}$ induces for $M \in \Sfpgrmod$ the exact contravariant $\Ext_\bullet$-sequence of which the first four terms are
  \[
    \GradedHom(B,M)^{\oplus ?} (\ell) \hookrightarrow \GradedHom(\m^\ell,M) \to \GradedHom(\m^{\ell+1},M) \to \Ext_\bullet^1(B,M)^{\oplus ?} (\ell) \mbox{.}
  \]
  By Remark~\ref{rmrk:TorExtQZ}.\eqref{rmrk:TorExtQZ.ExtofSmodm} both $\GradedHom(B,M)$ and $\Ext_\bullet(B,M)$ are quasi-zero.
  Hence, there exists a $\delta_{M,d} \in I_{\geq d}(M)$ such that the truncated morphisms $\Hom_{\geq d}(\m^\ell, M) \to \Hom_{\geq d}(\m^{\ell+1},M)$ become isomorphisms in $\Sfpgrmodd$ for $\ell \geq t$ iff $t \geq \delta_{M,d}$.
\end{proof}

In particular, once $B$ has effective coset representatives, $D_{\m,\geq d}$ is algorithmically computable on objects and morphisms, since the internal $\Hom$ functor $\Hom_{\ge d}$ is.

\begin{defn} \label{defn:defect_of_saturation}
  We call $\delta_{M,d} \in I_{\geq d}(M)$ from Lemma~\ref{lemm:Dm_colimit} the \textbf{defect of saturation} of $M$.
\end{defn}

\begin{exmp}
  Note that $1=\delta_{\m(-t),0} \in I_{\geq 0}(\m(-t)) = [0,\linreg_{\geq 0} \m(-t)+1] = [0,t+1]$ for all $t \in \Z_{\geq 0}$.
  In other words, the maximum of $I_{\geq d}(M)$ can be an arbitrarily bad upper bound for $\delta_{M,d}$.
\end{exmp}

\begin{exmp}
  For $M=S \oplus B(-t)$ and $t \geq 0$ we compute $\GradedHom(B,M)=B(-t)$ and $\Ext_\bullet^1(B,M)=B(-t+1)^{n+1}$ (for $n > 0$).
  Hence $\delta_{M,0}^0=t+1=\delta_{M,0}^1 = \delta_{M,0}$ is the defect of saturation.
  Thus, for certain examples factoring out the $\Sfpgrmodd^0$-torsion part of $M$ a priori could be beneficial.
\end{exmp}

The natural transformation
\[
  \eta_M := \varinjlim_\ell \left( \eta^\ell_M: M \to \Hom_{\geq d}(\m^\ell,M) \right): M \to D_{\m,\geq d}(M)
\]
is induced by applying $\Hom_{\geq d}(-,M)$ to the the embeddings $(S \hookleftarrow \m^\ell)_{\geq d}$.

Now we reprove that the ideal transform computes the module of twisted global sections (cf., e.g., \cite[\S C.3]{vasconcelos}).
\begin{theorem} \label{thm:Dm}
  The $d$-truncated $\m$-transform $D_{\m,\geq d}$ together with the natural transformation $\eta:\Id_\A \to D_{\m, \geq d}$ is a \nameft{Gabriel} monad of $\A = \Sfpgrmodd$ w.r.t.\ $\C := \Sfpgrmod^0_{\geq d}$.
\end{theorem}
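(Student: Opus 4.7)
The plan is to verify each of the five conditions of Theorem~\ref{thm:Csaturating} for $\WW := D_{\m,\geq d}$ with the natural transformation $\eta := \varinjlim_\ell \eta^\ell$, taking $\A = \Sfpgrmodd$ and $\C = \Sfpgrmod^0_{\geq d}$.

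Conditions \eqref{thm:Csaturating:ker} and \eqref{thm:Csaturating:eta} should be short. For \eqref{thm:Csaturating:ker}, if $C \in \C$ vanishes in degrees $\geq N$, then a graded homomorphism $\m^\ell \to C$ of degree $p \geq d$ can send the degree-$\ell$ generators only into $C_{\geq \ell + d}$, which vanishes once $\ell \geq N - d$; hence $D_{\m,\geq d}(C) = 0$ in the colimit. For \eqref{thm:Csaturating:eta}, Corollary~\ref{coro:saturation_0}\eqref{coro:saturation_0.S-ml} provides each $\eta^\ell_N$ as an isomorphism whenever $N \in \Sat_\C(\A)$, and the filtered colimit $\eta_N$ inherits this.

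The main obstacle is condition \eqref{thm:Csaturating:im}: showing $N := D_{\m,\geq d}(M) \in \Sat_\C(\A)$ for arbitrary $M \in \A$. I plan to verify Corollary~\ref{coro:saturation_0}\eqref{coro:saturation_0.S-ml} by fixing $\ell \geq 0$ and showing $\eta^\ell_N$ is an isomorphism. The key is to exploit the freedom in Lemma~\ref{lemm:Dm_colimit} to represent $N = \Hom_{\geq d}(\m^{L'}, M)$ for $L'$ arbitrarily large. Since any $p \geq d$ combined with $\ell \geq 0$ satisfies $\ell + p \geq d$, the truncation causes no loss and the $\Hom$-$\otimes$ adjunction identifies $\Hom_{\geq d}(\m^\ell, N)$ with $\Hom_{\geq d}(\m^\ell \otimes_S \m^{L'}, M)$. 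Applying $\Hom_\bullet(-, M)$ to the short exact sequence $0 \to K \to \m^\ell \otimes_S \m^{L'} \to \m^{\ell + L'} \to 0$, and invoking the stabilization $\Hom_{\geq d}(\m^{L'}, M) = \Hom_{\geq d}(\m^{\ell + L'}, M) = N$ from Lemma~\ref{lemm:Dm_colimit}, realizes $\eta^\ell_N$ as an embedding whose cokernel sits inside $\Hom_{\geq d}(K, M)$. Since $K = \Tor^S_1(S/\m^\ell, \m^{L'})$ is $\m^\ell$-annihilated (Remark~\ref{rmrk:TorExtQZ}\eqref{rmrk:TorExtQZ.Tor}), maps $K \to M$ factor through the quasi-zero $\Hom_\bullet(S/\m^\ell, M)$, whose support has an upper bound independent of $L'$; since $K$ is concentrated in degrees $\geq \ell + L'$, a degree count forces $\Hom_{\geq d}(K, M) = 0$ once $L'$ is chosen large enough.

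Conditions \eqref{thm:Csaturating:exact} and \eqref{thm:Csaturating:idem} then follow. For \eqref{thm:Csaturating:exact}, $D_{\m,\geq d}$ is left exact as a filtered colimit of left-exact functors; applying the long exact $\Ext$-sequence to a short exact sequence $0 \to M' \to M \to M'' \to 0$ in $\A$, together with the isomorphism $\Ext^1_\bullet(\m^\ell, -) \cong \Ext^2_\bullet(S/\m^\ell, -)$ and the standard graded vanishing of local cohomology in high degrees, shows that the cokernel of $D_{\m,\geq d}(M) \to D_{\m,\geq d}(M'')$ embeds into the quasi-zero module $H^2_\m(M')_{\geq d}$. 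Being finitely presented as a quotient of $D_{\m,\geq d}(M'') \in \A$, this cokernel lies in $\C$, so the image sequence has defects in $\C$, i.e., is exact in $\Sat_\C(\A)$. For \eqref{thm:Csaturating:idem}, naturality of $\eta$ gives $D_{\m,\geq d}(\eta_M) \circ \eta_M = \eta_{D_{\m,\geq d}(M)} \circ \eta_M$, so the difference $D_{\m,\geq d}(\eta_M) - \eta_{D_{\m,\geq d}(M)}$ factors through $\coker \eta_M$; by \eqref{etal} and the vanishing of $H^1_\m$ in high degrees this cokernel lies in $\C$, while $D_{\m,\geq d}^2(M) \in \Sat_\C(\A)$ by \eqref{thm:Csaturating:im}, so the saturation property $\Hom_\A(\C, \Sat_\C(\A)) = 0$ forces the difference to vanish.
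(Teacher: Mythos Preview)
Your proof is correct and follows the same overall architecture as the paper's: both verify the five conditions of Theorem~\ref{thm:Csaturating}.  Conditions~\eqref{thm:Csaturating:ker} and~\eqref{thm:Csaturating:eta} are handled identically.  For~\eqref{thm:Csaturating:im} the paper first replaces $M$ by a high truncation with no $\C$-torsion (Remark~\ref{rmrk:Dm}.\eqref{rmrk:Dm.power}) so that $\Hom_{\geq d}(\m^\ell,M)\cong\Hom_{\geq d}(\otimes^\ell\m,M)$, and then uses the $\otimes$--$\Hom$ adjunction to reduce $\eta^1_N$ to a single transition map in the colimit; your variant via the kernel $K=\Tor_1^S(S/\m^\ell,\m^{L'})$ of $\m^\ell\otimes_S\m^{L'}\twoheadrightarrow\m^{\ell+L'}$ achieves the same end without pre-truncating $M$, at the cost of the extra degree bookkeeping on $K$.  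For~\eqref{thm:Csaturating:exact} the paper stays at a fixed stabilizing $\ell$ and uses only that $\Ext^1_{\geq d}(\m^\ell,M')\in\C$ (Remark~\ref{rmrk:TorExtQZ}.\eqref{rmrk:TorExtQZ.Extofm}), which is slightly cheaper than your passage to $H^2_\m(M')$ and its high-degree vanishing.

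The genuinely different step is~\eqref{thm:Csaturating:idem}.  The paper proves $\eta D_{\m,\geq d}=D_{\m,\geq d}\eta$ by a direct adjunction calculation, unwinding both sides through $\Hom_{\geq d}((S\hookleftarrow\m^\ell)\otimes_S\m^\ell,M)$.  Your argument is purely categorical: naturality of $\eta$ gives $(D\eta_M-\eta_{DM})\circ\eta_M=0$, so the difference factors through $\coker\eta_M\in\C$, and then $\Hom_\A(\C,\Sat_\C(\A))=0$ kills it.  This is cleaner and more portable---it would verify~\eqref{thm:Csaturating:idem} for \emph{any} endofunctor already known to satisfy~\eqref{thm:Csaturating:im} and to have $\coker\eta\in\C$, without touching the specific colimit description of $D_{\m,\geq d}$.  (Incidentally, you can avoid invoking high-degree vanishing of $H^1_\m$ here too: for the stabilizing $\ell=\delta_{M,d}$ the sequence~\eqref{etal} already identifies $\coker\eta_M$ with $\Ext^1_{\geq d}(S/\m^\ell,M)\in\C$ via Remark~\ref{rmrk:TorExtQZ}.\eqref{rmrk:TorExtQZ.ExtofSmodm}.)
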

In fact, the theorem holds for all $d \in \Z$. The proof below assumes $d \leq 0$ to avoid case distinctions.
\begin{coro} \label{coro:saturating_power}
  $\Hom_{\geq d}(\m^\ell,M)$ is $\Sfpgrmodd^0$-saturated iff $\ell \geq \delta_{M,d}$.
\end{coro}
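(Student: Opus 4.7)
Set $N_\ell := \Hom_{\geq d}(\m^\ell, M)$ and let $\sigma_\ell : N_\ell \to D_{\m, \geq d}(M)$ denote the canonical cocone map into the colimit defining the ideal transform.

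For the implication $\ell \geq \delta_{M,d} \Rightarrow N_\ell$ saturated, Lemma~\ref{lemm:Dm_colimit} makes $\sigma_\ell$ an isomorphism, while Theorem~\ref{thm:Dm} combined with condition~\eqref{thm:Csaturating:im} of Theorem~\ref{thm:Csaturating} places $D_{\m, \geq d}(M) \in \Sat_\C(\A)$; transporting saturation across $\sigma_\ell$ finishes this direction.

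For the converse, the plan is to exploit the naturality square of the unit $\eta : \Id_\A \to D_{\m, \geq d}$ evaluated at $\sigma_\ell$, namely the commuting square with top row $\sigma_\ell$, bottom row $D_{\m, \geq d}(\sigma_\ell) : D_{\m, \geq d}(N_\ell) \to D_{\m, \geq d}^2(M)$, and vertical arrows $\eta_{N_\ell}$ on the left and $\eta_{D_{\m, \geq d}(M)}$ on the right. The right vertical arrow is an isomorphism by condition~\eqref{thm:Csaturating:eta} of Theorem~\ref{thm:Csaturating}, since $D_{\m, \geq d}(M)$ is already saturated. To see that the bottom arrow is also an isomorphism, I would extract from the proof of Lemma~\ref{lemm:Dm_colimit} that each elementary step $N_k \to N_{k+1}$ has kernel and cokernel which are a quotient and a subobject of suitable shifts of $\Hom_\bullet(B, M)^{\oplus ?}$ and $\Ext_\bullet^1(B, M)^{\oplus ?}$, both quasi-zero by Remark~\ref{rmrk:TorExtQZ}. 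Composing the finitely many such steps from $\ell$ up to $\delta_{M, d}$ (and noting that $\sigma_{\delta_{M, d}}$ is an iso) shows $\ker \sigma_\ell, \coker \sigma_\ell \in \C$; splicing the two short exact sequences $\ker \sigma_\ell \hookrightarrow N_\ell \twoheadrightarrow \Img \sigma_\ell$ and $\Img \sigma_\ell \hookrightarrow D_{\m, \geq d}(M) \twoheadrightarrow \coker \sigma_\ell$ with conditions~\eqref{thm:Csaturating:ker} and \eqref{thm:Csaturating:exact} of Theorem~\ref{thm:Csaturating} then forces $D_{\m, \geq d}(\sigma_\ell)$ to be an isomorphism.

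By commutativity of the naturality square, $\eta_{N_\ell}$ and $\sigma_\ell$ are simultaneously isomorphisms. Assuming $N_\ell$ saturated, condition~\eqref{thm:Csaturating:eta} of Theorem~\ref{thm:Csaturating} makes $\eta_{N_\ell}$ iso, hence $\sigma_\ell$ iso, and Lemma~\ref{lemm:Dm_colimit} concludes $\ell \geq \delta_{M,d}$. The expected obstacle is the quasi-zero property for $\ker \sigma_\ell$ and $\coker \sigma_\ell$: it is not isolated as a separate statement in the paper and must be extracted from the proof of Lemma~\ref{lemm:Dm_colimit} by inducting over the finitely many elementary composition steps, using that $\C$ is thick.
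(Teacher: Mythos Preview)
Your argument is correct. The paper states Corollary~\ref{coro:saturating_power} without proof, treating it as immediate from Lemma~\ref{lemm:Dm_colimit} and Theorem~\ref{thm:Dm}; your write-up is exactly the natural way to unpack that implication, and both directions go through as you describe.

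One small streamlining: once Theorem~\ref{thm:Dm} identifies $D_{\m,\geq d}$ with $\SS\circ\QQ$, you can bypass the separate appeal to conditions~\eqref{thm:Csaturating:ker} and~\eqref{thm:Csaturating:exact}. Since $\QQ$ is exact and kills $\C$, the fact that $\ker\sigma_\ell,\coker\sigma_\ell\in\C$ (which you correctly extract from the four-term sequence in the proof of Lemma~\ref{lemm:Dm_colimit} together with thickness of $\C$) makes $\QQ(\sigma_\ell)$ an isomorphism in $\A/\C$, and then $\SS$ preserves isomorphisms. This gives $D_{\m,\geq d}(\sigma_\ell)$ iso in $\A$ in one step. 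Your route via the corestricted exactness also works, but then you should note that $\iota:\Sat_\C(\A)\hookrightarrow\A$, being a functor, preserves isomorphisms, so that an iso in $\Sat_\C(\A)$ is automatically one in $\A$.
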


Before proving the theorem we state some simple facts about ideal transforms.

\begin{rmrk} \label{rmrk:Dm}
\mbox{}
\begin{enumerate}
  \item \label{rmrk:Dm.a}
    Any $N \in \Sfpgrmod^0$ vanishes in degrees greater than $\reg N$.
    Thus,
    \[
      \GradedHom(L_{\geq \ell}, N)_{\geq \reg N + 1 - \ell} = 0
    \]
    for all $\ell \in \Z$ and $L \in \Sfpgrmod$.
  \item The embedding $M_{\geq t} \hookrightarrow M \in \Sfpgrmodd$ induces (by simple degree considerations) an isomorphism
    \[
      \Hom_{\geq d}(L_{\geq \ell}, M_{\geq t} ) \stackrel{\sim}{\to} \Hom_{\geq d}(L_{\geq \ell}, M )\quad  \mbox{ for all } d \leq t \leq \ell + d \mbox{.}
    \]
    In particular, $D_{\m,\geq d}(M) \cong D_{\m,\geq d}(M_{\geq t})$ for any $t \geq d$ and we are allowed to replace $M$ by any of its truncations.
    \label{rmrk:Dm.>=t}
  \item \label{rmrk:Dm.power}
    For $M \in \Sfpgrmod_{\geq d}$ take $t \geq d$ large enough such that the submodule $M_{\geq t}$ has no $\Sfpgrmodd^0$-torsion.
    Then
    \[
      \Hom_{\geq d}(\m^\ell, M)
      \cong
      \Hom_{\geq d}(\m^\ell, M_{\geq t})
      \cong
      \Hom_{\geq d}(\otimes^\ell \m, M_{\geq t})
    \]
    for all $\ell \geq t - d$ by \eqref{rmrk:Dm.>=t} and Remark~\ref{rmrk:m.tensor}.
    An admissible choice is $t:=\linregd M + 1$, then $\ell \geq t - d \geq \delta_{M,d}$ (cf.~Lemma~\ref{lemm:Dm_colimit}).
    In particular, after replacing $M$ by a high enough truncation we can always assume that $\Hom_{\geq d}(\m^\ell, M) \cong \Hom_{\geq d}(\otimes^\ell \m, M)$.
  \item Since the shift functor $(1): \Sfpgrmodd \to \Sfpgrmod_{\geq d+1},\  M \mapsto M(1),\  \phi \mapsto \phi(1)$ is (quasi-)inverse to the shift functor $(-1): \Sfpgrmod_{\geq d+1} \to \Sfpgrmodd$ and $D_{\m,\geq d}\circ(-1)=(-1)\circ D_{\m,\geq d+1}$ we can restrict the following proofs to $D_{\m,\geq 0}$.
    \label{rmrk:Dm.c}
  \end{enumerate}
\end{rmrk}

\begin{proof}[Proof of Theorem~\ref{thm:Dm}]
We use Theorem~\ref{thm:Csaturating}.
Due to Remark~\ref{rmrk:Dm}.\eqref{rmrk:Dm.c} we only need to consider the case $d=0$.
  \begin{enumerate}
    \item[\ref{thm:Csaturating}.\eqref{thm:Csaturating:ker}] $\C \subset \ker D_{\m,\geq 0}$: \\
      Applying Remark~\ref{rmrk:Dm}.\eqref{rmrk:Dm.a} with $L=S$ (and $L_{\geq L} = S_{\geq \ell} = \m^\ell)$ we conclude that $D_\m$ vanishes\footnote{For $N \in \C$ all modules in the sequential colimit defining $D_{\m, \geq 0}(N)$ vanish for $\ell \geq \delta_{N,0} < \infty$.} on $\Sfpgrmod^0$ and $D_{\m,\geq 0}$ on $\Sfpgrmod_{\geq 0}^0$.
    \item[\ref{thm:Csaturating}.\eqref{thm:Csaturating:im}] $D_{\m,\geq 0}(\A) \subset \Sat_\C(\A)$: \\
      For any $M\in\A$, the map
      \begin{align*}
        \Hom_{\geq 0}\left(S \hookleftarrow \m, D_{\m,\geq 0}(M) \right)
        &=
        \Hom_{\geq 0}\left(S \hookleftarrow \m, \Hom_{\geq 0}(\m^{\delta_{M,0}}, M)\right) \\
        &=
        \Hom_{\geq 0}\left(S \hookleftarrow \m, \Hom_{\geq 0}(\otimes^{\delta_{M,0}} \m, M)\right) \\
        &=
        \Hom_{\geq 0}\left(\otimes^{\delta_{M,0}} \m \hookleftarrow \otimes^{\delta_{M,0}+1} \m, M\right) \\
        &=
        \Hom_{\geq 0}\left(\otimes^{\delta_{M,0}} \m, M \right)  \to \Hom_{\geq 0}\left( \otimes^{\delta_{M,0}+1} \m, M\right) \\
        &=
        \Hom_{\geq 0}\left(\m^{\delta_{M,0}}, M\right)  \to \Hom_{\geq 0}\left(\m^{\delta_{M,0}+1}, M\right)
      \end{align*}
      is an isomorphism by Lemma~\ref{lemm:Dm_colimit} proving statement~\eqref{coro:saturation_0.S-m} of Corollary~\ref{coro:saturation_0}.
      We have repeatedly used Remark~\ref{rmrk:Dm}.\eqref{rmrk:Dm.power} and the adjunction between $\otimes$ and $\Hom_{\geq 0}$.
    \item[\ref{thm:Csaturating}.\eqref{thm:Csaturating:exact}] $G := \cores_{\Sat_\C(\A)} D_{\m,\geq 0}$ is exact: \\
      Applying $\Hom_{\geq 0}(\m^\ell,-)$ to the short exact sequence $L \hookrightarrow M \twoheadrightarrow N$ in $\Sfpgrmod_{\geq 0}$ yields the exact sequence
      \[
        \Hom_{\geq 0}(\m^\ell,L) \hookrightarrow \Hom_{\geq 0}(\m^\ell,M) \to \Hom_{\geq 0}(\m^\ell,N) \to \Ext_{\geq 0}^1(\m^\ell,L)
      \]
      as part of the long exact covariant $\Ext_{\geq 0}$-sequence.
      Since $\Ext_{\geq 0}^1(\m^\ell,L)$ is quasi-zero by Remark~\ref{rmrk:TorExtQZ}.\eqref{rmrk:TorExtQZ.Extofm} the sequence is exact up to defects in $\Sfpgrmod_{\geq 0}^0$.
    \item[\ref{thm:Csaturating}.\eqref{thm:Csaturating:idem}] $\eta D_{\m,\geq 0} = D_{\m,\geq 0} \eta$: \\
    We repeatedly use the adjunction between $\otimes$ and $\Hom_{\geq 0}$ and Lemma~\ref{lemm:Dm_colimit} to interchange the involved sequential colimits over $\ell'$ and $\ell''$ by a common $\ell \geq \ell',\ell''$, high enough to stabilize both colimits:
      \begin{align*}
        \eta_{D_{\m,\geq 0}(M)}
        &=
        \varinjlim_{\ell'} \Hom_{\geq 0}(S \hookleftarrow \m^{\ell'}, \varinjlim_{\ell''} \Hom_{\geq 0}(\m^{\ell''}, M))
        \\
        &=
        \Hom_{\geq 0}(S \hookleftarrow \m^\ell, \Hom_{\geq 0}(\m^\ell, M))
        \\
        &=
        \Hom_{\geq 0}((S \hookleftarrow \m^\ell) \otimes_S \m^\ell, M)
        \\
        &=
        \Hom_{\geq 0}(\m^\ell, \Hom_{\geq 0}(S \hookleftarrow \m^\ell, M))
        \\
        &=
        \varinjlim_{\ell''} \Hom_{\geq 0}(\m^{\ell''}, \varinjlim_{\ell'} \Hom_{\geq 0}(S \hookleftarrow \m^{\ell'}, M))
        \\
        &=
        D_{\m,\geq 0}(\eta_M)
        \mbox{.}
      \end{align*}
      The proof implicitly uses commuting diagrams of morphisms in $\Sfpgrmod_{\geq 0}$ to justify the equality signs.\footnote{%
	We could have used the fact that $D_{\m,\geq 0}=\varinjlim_\ell \Hom_{\geq 0}(\m^{\ell},-)$ commutes with directed colimits.
	However, the general form of the second statement is not quite trivial \cite[Coro.~3.4.11]{BrSh} (the directed colimit is called direct limit in \cite[Terminology 3.4.1]{BrSh}).
	Note that although the ideal transform commutes with \emph{directed} colimits, it does not generally commute with arbitrary finite colimits, for otherwise it would be right exact and hence exact.
      }
    \item[\ref{thm:Csaturating}.\eqref{thm:Csaturating:eta}] $\eta \iota$ is a natural isomorphism: \\
      Let $M \in \Sfpgrmod_{\geq 0}$ be saturated w.r.t.\  $\Sfpgrmod_{\geq 0}^0$.
      Applying $\Hom_{\geq 0}(-,M)$ to the short exact sequence $S/\m^\ell \twoheadleftarrow S \hookleftarrow \m^\ell$ yields
      \[
        \underbrace{\Hom_{\geq 0}(S/\m^\ell,M)}_{\cong 0} \hookrightarrow M \xrightarrow{\eta^\ell_M} \Hom_{\geq 0}(\m^\ell,M) \twoheadrightarrow \underbrace{\Ext_{\geq 0}^1(S/\m^\ell,M)}_{\cong 0}
      \]
      since $S/\m^\ell \in \Sfpgrmod_{\geq 0}^0$. In other words, $\eta^\ell_M$ is an isomorphism for all $\ell$.
      \qedhere
  \end{enumerate}
\end{proof}

\begin{rmrk}
  The saturation process of $M \in \Sfpgrmod$ conducted by $D_\m$ brings $\linreg$ to $-\infty$, whereas $\reg$ is only brought down to the regularity of the sheafification.
\end{rmrk}

Since the \nameft{Frobenius} powers $\m^{[\ell]} := \langle x_0^\ell, \ldots, x_n^\ell \rangle$ satisfy $\m^\ell \geq \m^{[\ell]} \geq \m^{(n+1) \ell}$ we can use them instead of $\m^\ell$ them in the above sequential colimits.
They are computationally superior since their number of generators does not increase with $\ell$.
In other words, the module $\Hom_{\geq d}(\m^{[\delta_{M,d}]}, M)$ is $\Sfpgrmodd^0$-saturated.
Alternatively, one could iteratively ($\delta_{M,d}$ times) apply $\Hom_{\geq d}(\m,-)$ to (the $\Sfpgrmodd^0$-torsion-free factor of) $M$.
It depends on the example which algorithm is faster.

\section{Graded \texorpdfstring{$S$}{S}-modules and linear \texorpdfstring{$E$}{E}-complexes}\label{sec:Emodules}

In this section we describe how to translate the module structure of $M \in \Sfpgrmod$ into the structure of a linear complex $\RR(M)$ over the exterior algebra $E := \wedge V$, which is \nameft{Koszul} dual to $S=\Sym V^*$.
This translation turns out to be functorial, algorithmic, and an adjoint equivalence of categories.
We denote the category of finitely generated graded $E$-modules by $\Efpgrmod$.

Let $e_0,\ldots,e_n$ denote a $B$-basis $V$ of which the indeterminates $x_0,\ldots,x_n$ of $S$ form the dual $B$-basis of $W = V^* = \Hom_B(V,B)$.
We set $\deg(e_i)=-1$ for all $i=0,\ldots,n$.

\subsection{The functor \texorpdfstring{$\RR$}{R}}

The $B$-linear maps
\[
  \mu^i(x_j): M_i \to M_{i+1}, m \mapsto x_j m, \quad \mbox{for } j=0,\ldots,n, \mbox{ and } i \in \Z
\]
induced by the indeterminates $x_j$ encode the graded $S$-module structure of an $M \in \Sfpgrmod$ (cf.~Algorithm~\ref{algo:degree_i} for an algorithm to compute $M_i$).

\begin{exmp} \label{exmp:R_str_sheaf}
For $S:=B[x_0,x_1]$ consider the free $S$-module $M:=S=S(0)$ of rank $1$.
Each graded part $M_i$ is a free $B$-module for which we fix a basis of monomials, e.g., $M_0 = \langle 1 \rangle_B$, $M_1 = \langle x_0, x_1 \rangle_B$, $M_2 = \langle x_0^2, x_0 x_1, x_1^2 \rangle_B$.
Then the matrices
\begin{align*}
  0: && \mu^0(x_0) = \left(\begin{smallmatrix} 1&0 \end{smallmatrix}\right)\text{, } & \mu^0(x_1) = \left(\begin{smallmatrix} 0&1 \end{smallmatrix}\right), \\
  1: && \mu^1(x_0) = \left(\begin{smallmatrix} 1&0&0 \\ 0&1&0\end{smallmatrix}\right)\text{, } & \mu^1(x_1) = \left(\begin{smallmatrix} 0&1&0 \\ 0&0&1 \end{smallmatrix}\right), \\
  \vdots\phantom{:}
\end{align*}
represent the maps $\mu^i(x_j)$.
\end{exmp}

Using the $B$-basis $(e_0,\ldots,e_n)$ of $V$ define for each $i \in \Z$ the map $\mu^i$ as the composition
\[
  \mu^i:
  \left\{
  \begin{array}{rcccl}
    M_i &\to& \End_B(V) \otimes_B M_i &\to& V \otimes_B M_{i+1}, \\
    m &\mapsto& \id_V \otimes m &\mapsto& \sum_{i=0}^n e_j \otimes x_j m
  \end{array}
  \right.\mbox{.}
\]

Using the natural isomorphism $\Hom_B(M_i, V \otimes_B M_{i+1}) \cong \Hom_\Efpgrmod(E \otimes_B M_i, E \otimes_B M_{i+1})$ each $\mu^i$ can equally be understood as a map of \emph{graded} $E$-modules
\[
  \mu^i: E \otimes_B M_i \to E \otimes_B M_{i+1} \mbox{,}
\]
where the $B$-module $M_j$ is considered as a graded $B$-module concentrated in degree $j$ and, therefore, $E \otimes_B M_j$ is generated by (a generating set of) $M_j$ in degree $j$.

For a better functorial behavior we replace $E$ by its $B$-dual \cite[§16C]{lam99}
\[
  \omega_E := \Hom_B(E,B) \cong \wedge W \cong \wedge^{n+1} W \otimes_B E
\]
in the above maps.\footnote{It is again a free graded $E$-module which is \emph{non}naturally isomorphic to $E(-n-1)$.}
In particular, $\omega_E$ lives in the degree interval $0,\ldots,n+1$ and its socle $(\omega_E)_0$, which is naturally isomorphic to $B$, is concentrated in degree $0$.
We denote the distinguished generator of the socle corresponding to $1_B$ by $1_{\omega_E}$.

This change of language is justified by reinterpreting $\mu^i: M_i \to V \otimes_B M_{i+1}$ as a map $\mu^i:W \otimes_B M_i \to M_{i+1}$ using the adjunction
\[
  \Hom_B(W \otimes_B X, Y) \cong \Hom_B(X,\Hom_B(W,Y)) \cong \Hom_B(X, W^* \otimes_B Y) \mbox{.}
\]
The graded $E$-module $\omega_E \otimes_B M_j$ has (compared with $E \otimes_B M_j$) the advantage of having the $B$-module $M_j$ as its socle interpreted as a graded $E$-module concentrated in degree $j$.

The commutativity of $S$ implies that the composed map $\mu^{i+1} \circ \mu^i: \omega_E \otimes_B M_i \to \omega_E \otimes_B M_{i+2}$ is zero.
Thus, the sequence of $\mu_i$'s yields the so-called \textbf{$\RR$-complex} (cf.~\cite[§2]{EFS} and \cite[§2]{ES})
\[
  \RR(M): 
  \cdots \to \omega_E \otimes_B M_i \xrightarrow{\mu^i} \omega_E \otimes_B M_{i+1} \xrightarrow{\mu^{i+1}} \omega_E \otimes_B M_{i+2} \to \cdots 
\]

\begin{exmp_contd}[\ref{exmp:R_str_sheaf}]
For $M=S(0)$ we obtain the $\RR$-complex
\begin{center}
{\small
\begin{tikzpicture}[scale=0.9]
  \draw[-stealth'] (-2.75,0) node[left] {$0$} -- (-2.25,0) node[right] {$\omega_E(0)^1$};
  \draw[-stealth'] (-0.65,0) -- node[above] {$\left(\begin{smallmatrix} e_0 & e_1 \end{smallmatrix}\right)$} (0.5,0) node[right] {$\omega_E(-1)^2$};
  \draw[-stealth'] (2.5,0) -- node[above] {$\left(\begin{smallmatrix} e_0 & e_1 & \cdot \\ \cdot & e_0 & e_1 \end{smallmatrix}\right)$} (4.25,0) node[right] {$\omega_E(-2)^3$};
  \draw[-stealth'] (6.25,0) -- node[above] {$\left(\begin{smallmatrix} e_0 & e_1 & \cdot & \cdot \\ \cdot & e_0 & e_1 & \cdot \\ \cdot & \cdot & e_0 & e_1 \end{smallmatrix}\right)$} (8.75,0) node[right] {$\omega_E(-3)^4$};
  \draw[-stealth'] (10.75,0) -- (11.5,0) node[right] {$\cdots$};
\end{tikzpicture}
}
\end{center}
\end{exmp_contd}

\begin{lemm}[{\cite[Prop.~2.3]{EFS}}] \label{lemm:Betti}
There exists a natural isomorphism
\[
  H^a(\RR(M))_{a + i} \cong \Tor^S_i(B,M)_{a + i} \mbox{.}
\]
\end{lemm}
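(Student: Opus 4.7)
The plan is to identify the $\RR$-complex $\RR(M)$, in each fixed graded degree, with the result of applying $-\otimes_S M$ to the \nameft{Koszul} resolution of $B$ as a graded $S$-module. Since the latter computes $\Tor^S_\bullet(B,M)$ term-wise by definition, matching both complexes graded-degree-wise and verifying the differentials will yield the claim.

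First, recall the \nameft{Koszul} complex $K_\bullet$ with $K_i = \wedge^i W \otimes_B S(-i)$ and the standard differential
\[
  x_{j_1}\wedge\cdots\wedge x_{j_i}\otimes s \mapsto \sum_{\ell=1}^{i}(-1)^{\ell-1}\, x_{j_1}\wedge\cdots\widehat{x_{j_\ell}}\cdots\wedge x_{j_i}\otimes x_{j_\ell} s,
\]
which is a graded free resolution of $B$ over $S$. Hence $H_i(K_\bullet\otimes_S M)_\bullet = \Tor^S_i(B,M)_\bullet$.

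Next, fix $k := a+i$ and compare both complexes in graded degree $k$. In $K_\bullet \otimes_S M$, the position-$i$ term in degree $k$ is $\wedge^i W \otimes_B M_{k-i}$. In $\RR(M)$, the position-$a$ term is $\omega_E \otimes_B M_a$, whose degree-$k$ part is $(\omega_E)_{k-a}\otimes_B M_a = \wedge^{k-a}W\otimes_B M_a = \wedge^i W \otimes_B M_{k-i}$, using $a = k-i$. So the two complexes have matching terms in each fixed graded degree $k$, with the cohomological index $a$ in $\RR(M)$ corresponding to the homological index $i = k-a$ in $K_\bullet\otimes_S M$; note that the $\RR$-differential raising $a$ corresponds to lowering $i$, as required.

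It remains to match the differentials. Under the identification $\omega_E \cong \wedge W$, the $E$-module structure on $\omega_E$ is contraction by elements of $V$: for $v\in V$ the action $\iota_v: \wedge^p W \to \wedge^{p-1} W$ is the standard contraction. Unwinding the definition of $\mu^i$ (namely $1\otimes m\mapsto \sum_j e_j\otimes x_j m$ combined with $E$-linear extension), the $\RR$-differential in degree $k$ sends $\omega\otimes m\mapsto \sum_{j=0}^{n}\iota_{e_j}(\omega)\otimes x_j m$. Evaluating on a basis element $\omega = x_{j_1}\wedge\cdots\wedge x_{j_i}$ gives precisely $\sum_{\ell=1}^{i}(-1)^{\ell-1}\,x_{j_1}\wedge\cdots\widehat{x_{j_\ell}}\cdots\wedge x_{j_i}\otimes x_{j_\ell} m$, i.e.\ the \nameft{Koszul} differential. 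Therefore the two complexes are isomorphic in each graded degree, and passing to (co)homology yields
\[
  H^a(\RR(M))_{a+i} \cong H_i(K_\bullet\otimes_S M)_{a+i} \cong \Tor^S_i(B,M)_{a+i}.
\]
The naturality is automatic, since both sides are functorial in $M$ via the evident functoriality of the \nameft{Koszul} tensor and of $\RR$. The only delicate point, and the main obstacle, is ensuring the sign conventions agree when translating the map $\mu^i:E\otimes_B M_i\to E\otimes_B M_{i+1}$ into its realization on $\omega_E\otimes_B M_\bullet$; this reduces to the standard formula for the contraction action and is a short, unambiguous check on basis monomials.
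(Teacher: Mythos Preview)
Your proof is correct and follows exactly the approach sketched in the paper: interpret the bigraded differential module $\omega_E\otimes_B M$ either as $\RR(M)$ (grading by cohomological degree $a$) or as the \nameft{Koszul} resolution of $B$ tensored with $M$ (grading by homological degree $i=k-a$). You have simply spelled out the term-by-term and differential-by-differential identification that the paper leaves implicit.
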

\begin{proof}
  The idea is to interpret the \emph{bi}graded differential module $\omega_E \otimes_B M$ either as $\RR(M)$ or as the \nameft{Koszul} resolution of $B$ tensored with $M$ over $S$.
\end{proof}

Lemmas~\ref{lemm:Betti} and \ref{lemm:Betti=Bass} imply the following lemma, an important technical insight for the rest of this paper.

\begin{lemm}[Key Lemma] \label{lma:HRR_Ext}
  There exists a natural isomorphism
  \[
    H^a(\RR(M))_{a + i} \cong \Ext_\bullet^{n+1-i}(\wedge^{n+1}V,M)_{a + i}\mbox{.}
  \]
  Hence, there is a noncanonical isomorphism $H^a(\RR(\grM))_{a + n + 1 - j} \cong \Ext_\bullet^j(B,\grM)_{a - j}$.
\end{lemm}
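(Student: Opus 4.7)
The plan is to prove the Key Lemma by directly chaining Lemma~\ref{lemm:Betti} with Lemma~\ref{lemm:Betti=Bass}. Neither requires further work beyond being composed and having its graded indexing tracked carefully, so the lemma is essentially a bookkeeping corollary of the two preceding results.

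First I would restrict Lemma~\ref{lemm:Betti=Bass} to degree $a+i$ to obtain the natural isomorphism
\[
  \Tor^S_i(B,M)_{a+i} \;\cong\; \Ext_\bullet^{n+1-i}(\wedge^{n+1}V, M)_{a+i}\mbox{.}
\]
Composing this with the natural isomorphism $H^a(\RR(M))_{a+i} \cong \Tor^S_i(B,M)_{a+i}$ supplied by Lemma~\ref{lemm:Betti} gives the first claimed natural isomorphism. Naturality is preserved because both input isomorphisms are natural in $M$ and the composition is taken in a fixed internal degree.

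For the second (noncanonical) statement I would substitute $j := n+1-i$, so that $i = n+1-j$ and the degree $a+i$ rewrites as $a+n+1-j$. A choice of isomorphism $\wedge^{n+1}V \cong B(n+1)$ (this is the source of the noncanonicity, as it amounts to picking a generator of the top exterior power) then yields, via the standard shift rule $\Ext_\bullet^j(L(k),M) \cong \Ext_\bullet^j(L,M)(-k)$ applied to the first argument, the identification
\[
  \Ext_\bullet^j(\wedge^{n+1}V, M)_{a+n+1-j} \;\cong\; \Ext_\bullet^j(B, M)_{(a+n+1-j)-(n+1)} \;=\; \Ext_\bullet^j(B,M)_{a-j}\mbox{.}
\]
Combining this with the already-established first isomorphism gives the desired $H^a(\RR(M))_{a+n+1-j} \cong \Ext_\bullet^j(B,M)_{a-j}$.

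There is really no genuine obstacle here; the only place to stay careful is the sign conventions for graded shifts and the reindexing $i \leftrightarrow n+1-j$, together with the remark that the second isomorphism is noncanonical precisely because one has to trivialize the one-dimensional representation $\wedge^{n+1}V$. Since the statement will be used later only to pass between the (computationally convenient) $\Ext_\bullet^j(B,M)$ appearing in the definition of $\linreg$ and the cohomology of the linear complex $\RR(M)$, this level of care is sufficient for the applications in the following sections.
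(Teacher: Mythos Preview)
Your proof is correct and follows exactly the same approach as the paper, which simply states that the lemma is implied by Lemmas~\ref{lemm:Betti} and \ref{lemm:Betti=Bass}. Your additional bookkeeping for the noncanonical second isomorphism (the substitution $j=n+1-i$ and the shift induced by $\wedge^{n+1}V \cong B(n+1)$) is the intended unpacking of that one-line justification.
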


Let $A$ be either $S$ or $E$.
An epimorphism in $\Afpgrmod$ is said to be $B$-split if it splits as a morphism over $B$.
A graded module $P \in \Afpgrmod$ is said to be \textbf{relatively projective} (with respect to $B$) if $\Hom_S(P,-)$ sends $B$-split epis to surjections.
Any module of the form $A\otimes_B M$, where $M$ is a $B$-module, is called \textbf{relatively free} (with respect to $B$).
By \cite[Proposition~1.1]{ES}, an $N \in \Afpgrmod$ is relatively projective if and only if it is relatively free.

A complex $C=C^\bullet$ of graded $E$-modules is called \textbf{linear} if each $C^i$ is relatively free (with respect to $B$) with socle concentrated in degree $i$.\footnote{and hence $C^i$ is generated in degree $i+n+1$.}
The \textbf{regularity} of a \emph{linear} complex $C$ is defined as
\[
  \reg C := \sup \{ a \in \Z \mid H^a(C) \neq 0 \} \in \Z \cup \{ -\infty, \infty \} \mbox{.}
\]

Lemma~\ref{lemm:Betti} or \ref{lma:HRR_Ext} connects the regularity of a graded module with that of its $\RR$-complex.

\begin{coro} \label{coro:reg}
  For $M \in \Sfpgrmod$ the equality $\reg M = \reg \RR(M)$ holds.
\end{coro}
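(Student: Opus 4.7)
The plan is to derive the equality as a direct bookkeeping consequence of Lemma~\ref{lemm:Betti}. The essential observation is that for the linear complex $\RR(M)$, the cohomology $H^a(\RR(M))$ at position $a$ is a subquotient of $\omega_E \otimes_B M_a$, which lives in the degree window $a, a+1, \ldots, a+n+1$. Hence $H^a(\RR(M)) \neq 0$ if and only if there exists some $i \in \{0,\ldots,n+1\}$ such that the graded piece $H^a(\RR(M))_{a+i}$ is nonzero.

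Next I would invoke Lemma~\ref{lemm:Betti} to rewrite each such piece as $\Tor^S_i(B,M)_{a+i}$. In particular $H^a(\RR(M)) \neq 0$ is equivalent to the existence of $i \in \{0,\ldots,n+1\}$ with $\Tor^S_i(B,M)_{a+i} \neq 0$.

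Now the identification is a clean reindexing. Substituting $d := a + i$ gives
\[
  \reg \RR(M) = \sup\{ a \in \Z \mid H^a(\RR(M)) \neq 0 \}
  = \sup\{ d - i \mid 0 \leq i \leq n+1,\ \Tor^S_i(B,M)_d \neq 0 \},
\]
and taking the supremum over $d$ first for each fixed $i$ yields
\[
  \reg \RR(M) = \max_{0 \leq i \leq n+1} \bigl( \reg \Tor^S_i(B,M)_\bullet - i \bigr) = \reg M,
\]
which is the desired equality. The degenerate case $M = 0$ (or more generally $\reg M = -\infty$) is automatic: all $\Tor^S_i(B,M)_\bullet$ vanish, hence all $H^a(\RR(M))$ vanish, so both regularities equal $-\infty$.

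There is no real obstacle; the only thing to be careful about is confirming the degree window $[a, a+n+1]$ of $H^a(\RR(M))$, which uses that $\omega_E$ is relatively free of socle degree $0$ and top degree $n+1$. Once this window is in place, Lemma~\ref{lemm:Betti} covers exactly all potentially nonzero graded pieces of $H^a(\RR(M))$, so no information about cohomology is lost or added in the translation.
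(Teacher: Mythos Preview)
Your argument is correct and is precisely the bookkeeping the paper has in mind: the corollary is stated immediately after Lemmas~\ref{lemm:Betti} and \ref{lma:HRR_Ext} as a direct consequence, and your reindexing via the degree window $[a,a+n+1]$ of $\omega_E \otimes_B M_a$ together with Lemma~\ref{lemm:Betti} is exactly the intended (and only) route. There is nothing to add.
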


These definitions allow us to describe the image of $\RR$.

\begin{defn}\label{defn:linear_complex}
  We denote by $\Egrlin$ the full subcategory of complexes $C$ of graded $E$-modules satisfying
  \begin{enumerate}
    \item $C$ is linear;
    \item each $C^i$ is finitely generated;
    \item $C$ is left bounded;
    \item $\reg C < \infty$.
  \end{enumerate}
  By $\Egrlinzero$ we denote the thick subcategory of bounded complexes.
  
  Finally, for any $d \in \Z$, denote by $\Egrlind$ the full subcategory of complexes in $\Egrlin$ with $C^{<d} = 0$ and by $\Egrlindzero := \Egrlind \cap \Egrlinzero$.
\end{defn}

\begin{rmrk}\label{rmrk:data_structure_egrlind}
  An object $C\in\Egrlind$ can be represented on a computer by the finite complex $C^d\to C^{d+1}\to\ldots\to C^{j-1}\to C^j$ provided that $j>\reg C$.
  The part $C^{>j}$ of $C$ can be recovered by an injective resolution of $\coker(C^{j-1}\to C^j)$.
  This relatively injective resolution is isomorphic to $\GradedHom(-,E)$ applied to a relatively projective resolution of $\GradedHom(\coker(C^{j-1}\to C^j),E)$.
  A morphisms in $\Egrlin$ can be represented on the computer by a chain morphism between two such finite complexes, one only needs to extend these complexes to equal cohomological degrees.
  Again, the part of the morphism in higher cohomological degrees can be computed by an injective resolution.
\end{rmrk}

\begin{prop}
  The construction $\RR$ induces two fully faithful functors $\RR: \Sfpgrmod\to\Egrlin$ and $\RRd:\Sfpgrmodd\to\Egrlind$ for all $d\in\Z$.
\end{prop}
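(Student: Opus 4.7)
My plan is to split the proposition into three checks: extending $\RR$ to morphisms, verifying its image lies in $\Egrlin$ (resp.\ $\Egrlind$), and proving bijectivity on $\Hom$-sets.

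\emph{Functoriality and image.}  Given $\phi: M \to N$ in $\Sfpgrmod$, restriction to degree $i$ yields a $B$-linear map $\phi_i: M_i \to N_i$, and I set $\RR(\phi)^i := \omega_E \otimes_B \phi_i: \omega_E \otimes_B M_i \to \omega_E \otimes_B N_i$. The graded $S$-linearity relation $\phi_{i+1}(x_j m) = x_j \phi_i(m)$ is equivalent (under the adjunction used to define $\mu^i$) to $\mu^i \circ \RR(\phi)^i = \RR(\phi)^{i+1} \circ \mu^i$, so $\RR(\phi)$ is a chain map and functoriality is clear. To see $\RR(M) \in \Egrlin$: linearity holds by construction since $\omega_E \otimes_B M_i$ is relatively free with socle $M_i$ in degree $i$; finite generation of each $\omega_E \otimes_B M_i$ over $E$ follows from $B$ being \nameft{Noether}ian and $M$ finitely presented, which forces each $M_i$ to be a finitely generated $B$-module; left boundedness follows from finite presentability of $M$ over the positively graded ring $S$; and $\reg \RR(M) = \reg M < \infty$ by Corollary~\ref{coro:reg}.

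\emph{Full faithfulness.}  The crux is the natural isomorphism
\[
  \Hom_{\Efpgrmod}\bigl(\omega_E \otimes_B M_i,\, \omega_E \otimes_B N_i\bigr)_0 \cong \Hom_B(M_i, N_i) \mbox{,}
\]
obtained by restriction to the socle with inverse $\omega_E \otimes_B (-)$. This rests on $\omega_E$ being a relative injective cogenerator whose socle is canonically $B$ in degree $0$, so after tensoring with a $B$-module $N_i$ concentrated in degree $i$ the socle is $N_i$ in degree $i$, and any graded $E$-linear map from $\omega_E \otimes_B M_i$ is pinned down by its action on the socle. Faithfulness of $\RR$ is then immediate. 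For fullness, a chain map $\psi: \RR(M) \to \RR(N)$ restricts on socles to $B$-linear maps $\phi_i: M_i \to N_i$; the chain relation $\mu^i \circ \psi^i = \psi^{i+1} \circ \mu^i$ applied to the socle yields exactly $\phi_{i+1}(x_j m) = x_j \phi_i(m)$, so $\phi := \bigoplus_i \phi_i$ is a graded $S$-module homomorphism, and by the above identification $\RR(\phi) = \psi$.

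\emph{Truncated version and main obstacle.}  For $\RRd$ the only additional observation is that $M \in \Sfpgrmodd$ forces $M_{<d} = 0$ and hence $\RR(M)^{<d} = 0$, so $\RR(M) \in \Egrlind$; full faithfulness is inherited since both $\Sfpgrmodd \subseteq \Sfpgrmod$ and $\Egrlind \subseteq \Egrlin$ are full subcategories. The step I expect to require the most care is the socle identification of $\Hom$-sets: it is really the content of relative \nameft{Matlis} duality for the \nameft{Frobenius} algebra $E$ over $B$, and without it fullness would fail. All other components reduce to routine verifications or to Corollary~\ref{coro:reg}.
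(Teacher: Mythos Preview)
Your argument is correct.  The functoriality and image checks match the paper's proof essentially verbatim (including the appeal to Corollary~\ref{coro:reg} for $\reg\RR(M)<\infty$).  Where you diverge is in the proof of fullness: the paper does \emph{not} argue directly but instead forward-references the explicit construction of the left adjoint quasi-inverse $\MM$ in Proposition~\ref{prop:from_linear_eventually_exact_to_module} (and Corollary~\ref{coro:from_linear_eventually_exact_to_module_ged} for the truncated version), from which fullness of $\RR$ follows formally.  Your route via the socle identification $\Hom_{\Efpgrmod}(\omega_E\otimes_B M_i,\omega_E\otimes_B N_i)_0\cong\Hom_B(M_i,N_i)$ is more self-contained for this particular proposition and avoids the forward reference; it amounts to the observation that $\omega_E$ is free of rank one over $E$ with multiplication by $e_0\cdots e_n$ giving an isomorphism from top degree to socle, so a graded $E$-map is pinned down by its socle restriction.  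The paper's approach, on the other hand, gets fullness for free once $\MM$ is in hand, and since $\MM$ is needed anyway for the rest of Section~\ref{sec:Emodules} there is no real extra cost.  One small point of phrasing: when you write ``the chain relation applied to the socle'', what you actually use is the degree $i{+}1$ component of the chain relation (i.e., the socle of the \emph{target} $\omega_E\otimes_B N_{i+1}$), together with the prior identification $\psi^i=\omega_E\otimes_B\phi_i$; this is what produces $\phi_{i+1}(x_jm)=x_j\phi_i(m)$.
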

\begin{proof}
  As $M\in\Sfpgrmod$ is finitely generated, $\RR(M)$ is left bounded.
  By definition, each $\RR(M)^i=\omega_E\otimes_B M_i$ is a finitely generated graded relatively free module with socle $M_i$ concentrated in degree $i$.
  Furthermore $\reg \RR(M) = \reg M < \infty$ by Corollary~\ref{coro:reg}.

  A graded morphism $\phi:M\to N$ induces morphisms $\phi_i:M_i\to N_i$ for all $i\in\Z$.
  Tensoring with $\omega_E$ yields morphisms $\RR(\phi)^i:\RR(M)^i\to\RR(N)^i$.
  These morphisms are chain morphisms, as $x_j\circ\phi_i=\phi_{i+1}\circ x_j$ and the $\mu^i$ are induced by the $x_j$ for all $i\in\Z$ and all $0\le j\le n$.
  
  Restricting $\RR$ to $\Sfpgrmodd$ corestricts to $\Egrlind$ by construction.
  These functors are obviously faithful.
  The fullness $\RR$ and $\RRd$ follows directly from the below Proposition~\ref{prop:from_linear_eventually_exact_to_module} and Corollary~\ref{coro:from_linear_eventually_exact_to_module_ged}, respectively.
\end{proof}

\subsection{The functor \texorpdfstring{$\RR$}{R} induces an equivalence}

The functor $\RR$ is an equivalence $\Sfpgrmod\xrightarrow{\sim}\Egrlin$ by \cite[Prop.~2.1]{EFS}.
In this section we explicitly construct the left adjoint quasi-inverse $\MM$ of $\RR$ and thus show \emph{constructively} that $\RR$ is an adjoint equivalence.

\begin{prop} \label{prop:from_linear_eventually_exact_to_module}
  There exists a functor $\MM:\Egrlin\to\Sfpgrmod$ such that $\MM\dashv\RR$ is an adjoint equivalence of categories which sends $\Sfpgrmod^0$ to $\Egrlinzero$.
\end{prop}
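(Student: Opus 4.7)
My plan is to construct $\MM$ explicitly on objects and morphisms, then set up unit and counit natural transformations, verify both are isomorphisms, and finally check the restriction to quasi-zero objects.

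For $C\in\Egrlin$, I would let $M_i := (C^i)_i$, which is a finitely generated $B$-module because $C^i$ is relatively free with socle in degree $i$. The identification $(C^i)_{i+1} \cong (\omega_E)_1 \otimes_B M_i = W \otimes_B M_i$ forced by relative freeness together with the socle condition turns the $(i+1)$-st internal degree part of the differential $d^i$ into a $B$-linear map $W \otimes_B M_i \to M_{i+1}$, equivalently a map $M_i \to V \otimes_B M_{i+1}$. Packaging the coordinates of this map as multiplications by $x_0,\ldots,x_n$ equips $\MM(C) := \bigoplus_i M_i$ with a graded $S$-module structure; commutativity of the action is forced by $d^{i+1}\circ d^i = 0$. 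A chain morphism between linear complexes restricts in each internal degree $i$ to a $B$-linear map $(C^i)_i \to ({C'}^i)_i$, and graded $S$-linearity is precisely the compatibility with the differentials, so $\MM$ is functorial.

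Finite generation of $\MM(C)$ over the Noetherian ring $S$ is the one step where the hypotheses $\reg C<\infty$ and left boundedness really matter: the plan is to match $C$ with $\RR(\MM(C))$ in a sufficiently high range (using the injective-resolution description of the high-cohomological-degree terms of $C$ recalled in Remark~\ref{rmrk:data_structure_egrlind}), and then translate $\reg C<\infty$ via the Key Lemma~\ref{lma:HRR_Ext} into a bound on $\reg \Tor^S_i(B,\MM(C))_\bullet$, which by standard minimal-generator arguments bounds the degrees of a generating set of $\MM(C)$. Next I would exhibit the counit $\varepsilon_M:\MM\RR(M)\to M$ as the tautological identification $\MM(\RR(M))_i=(\omega_E\otimes_B M_i)_i=M_i$, which respects the $S$-action by the very definition of the $\mu^i$ that assemble $\RR(M)$. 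For the unit $\eta_C:C\to\RR\MM(C)$, relative freeness of $C^i$ together with socle $M_i$ in degree $i$ forces a canonical graded $E$-linear isomorphism $C^i\cong\omega_E\otimes_B M_i$, and this isomorphism intertwines the two differentials because both are assembled from the same $B$-linear structure map $W\otimes_B M_i\to M_{i+1}$. Thus both natural transformations are isomorphisms and the triangle identities are routine.

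For the last clause, if $M\in\Sfpgrmod^0$ satisfies $M_{\geq\ell}=0$, then $\RR(M)^i=\omega_E\otimes_B M_i=0$ for $i\geq\ell$, so $\RR(M)\in\Egrlinzero$; conversely, if $C\in\Egrlinzero$ vanishes above cohomological degree $\ell$ then $\MM(C)_i=(C^i)_i=0$ for $i\geq\ell$, so $\MM(C)\in\Sfpgrmod^0$. The main obstacle I anticipate is the finite-generation step: everything else is essentially an unwinding of socle data and of the adjunction between $\otimes$ and $\Hom$, but finite generation is the only place where linearity, left boundedness, and finiteness of $\reg C$ must be combined, and it must be done without reference to any generating set of $C$, which is why the Key Lemma is indispensable.
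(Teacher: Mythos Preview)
Your approach is correct but takes a genuinely different route from the paper. The paper constructs $\MM(C)$ inductively: it first truncates $C$ above its regularity, reducing to a complex whose only cohomology sits at the bottom, where $\MM$ is defined as the cokernel of an explicit one-step relatively free presentation coming from $\ker\mu^r$; it then extends $\MM$ to lower cohomological degrees by iterated pushouts. The payoff is that a finite presentation of $\MM(C)$ is produced by the construction itself, which is exactly what the paper's algorithmic emphasis demands.

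Your construction is the direct one: $\MM(C)_i=(C^i)_i$ with the $S$-action read off from the differentials, and the unit and counit fall out almost tautologically. The only genuine work is finite generation, and here you are slightly overcomplicating matters. Your unit $\eta_C:C\to\RR(\MM(C))$ is already a global isomorphism of complexes (you say so yourself in the next paragraph), so there is no need to ``match $C$ with $\RR(\MM(C))$ in a sufficiently high range'' or to invoke Remark~\ref{rmrk:data_structure_egrlind}. Moreover the relevant input is Lemma~\ref{lemm:Betti} rather than the Key Lemma~\ref{lma:HRR_Ext}: its proof (reading the bigraded object $\omega_E\otimes_B M$ in two ways) works verbatim for any graded $S$-module, not only finitely presented ones, and the special case $i=0$ gives $(\MM(C)/\m\,\MM(C))_a\cong H^a(C)_a$, which vanishes for $a>\reg C$. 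Left boundedness of $C$, finite generation of each $(C^i)_i$ over $B$, graded Nakayama, and Noetherianity of $S$ then finish the job. Your route is cleaner for establishing the adjoint equivalence abstractly; the paper's route is what one would actually implement.
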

\begin{proof}
  Let $(C,\mu)\in\Egrlin$.

  For a preparatory step, assume that
  \begin{align} \tag{A} \label{A}
    \mbox{$H^r(C)$ is the only nonvanishing cohomology (this implies that $C^{<r} = 0$).}
  \end{align}
  Consider $\mu^r:W\otimes_B C^r_r\to C^{r+1}_{r+1}$ and extend $\ker(\mu^r)\xrightarrow{\kappa}W\otimes_B C^r_r$ to a map $S\otimes_B \ker(\mu^r)\xrightarrow{}S\otimes_B C^r_r$.
  Define $\MM(C)$ as its cokernel (with relatively free presentation $\pi_r: S \otimes_B C^r_r \twoheadrightarrow \MM(C)$).

  To justify the correctness of this preparatory step let $M\in\Sfpgrmod$ with $M_{<r}=0$ and such that $\RR(M)$ satisfies assumption~\eqref{A}.
  The natural isomorphism $\widetilde{\delta}^{-1}:M_r\xrightarrow{\sim}\MM(\RR(M))_r:m\mapsto 1_S\otimes_B 1_{\omega_E} \otimes_B m$ identifies a minimal set of generators $M$ with one of $\MM(\RR(M))$.
  The assumption~\eqref{A} for $\RR(M)$ is equivalent, by Lemma~\ref{lemm:Betti}, to $M$ being generated in degree $r$ and having a relatively free resolution which is linear in the $x_i$.
  In particular, the only relations involving the indeterminates $x_i$ of the finite set of generators of $M$ in $M_r$ are linear relations.
  All these linear relations are encoded in the map $\RR(M)^r\to\RR(M)^{r+1}$.
  The construction of $\MM$ above just imposes these linear relations of the generators of $M$ to the generators of $\MM(\RR(M))$.
  In particular, $\widetilde{\delta}$ induces an isomorphism $\delta_M:\MM(\RR(M))\to M$.
  Similarly, there exists an isomorphism $\eta_C:C\to\RR(\MM(C))$ for any $C\in\Egrlin$ satisfying assumption~\eqref{A}.

  For a general $(C,\mu)\in\Egrlin$, there is a bound $r$ (e.g., any $r>\reg(C)$) such that the preparatory step applies to $(C^{\ge r},\mu^{\ge r})$.
  Then, we inductively define $\MM(C)$ by decreasing the cohomological degree $d$.
  Let $(C,\mu)\in\Egrlin$ be a complex and $d < r$ such that $\MM(C^{\ge d+1})$ is defined by the induction hypothesis with relatively free presentation
  $\pi_{d+1}: S\otimes_B(C^{d+1}_{d+1}\oplus\ldots\oplus C^r_r)\twoheadrightarrow\MM(C^{\ge d+1})$.
  We define $\MM(C^{\ge d})$ as a pushout of the span of $\beta$ and $\gamma$ defined as follows:
  Let $\alpha: S\otimes_B W\to S:p\otimes x_i\to x_ip$ and $\iota:C^{d+1}_{d+1}\hookrightarrow C^{d+1}_{d+1}\oplus\ldots\oplus C^r_r$ be the embedding in the direct sum.
  Now set $\beta := \alpha\otimes_B C^d_d$ and $\gamma := \pi_{d+1} \circ (S\otimes_B(\iota\circ\mu^d))$ with common source $S \otimes_B W \otimes C^d_d$ (recall, $\mu^d:W\otimes_B C^d_d\to C^{d+1}_{d+1}$).
  This inductive step of the construction of $\MM$ is the reverse construction of $\RR$.

  To apply $\MM$ to a morphism $\phi:C\to D$ in $\Egrlin$ we use the identification of $\MM(C)_i$ with $C_i^i$, map $C_i^i$ using $\phi^i$ to $D_i^i$, which we finally identify with $\MM(D)_i$.
  
  This equivalence of categories is an adjoint equivalence.
  We already have constructed the unit $\eta$ and counit $\delta$ as natural isomorphisms in the preparatory step.
  This unit and counit naturally extends into lower cohomological degrees using the natural $B$-isomorphisms $C^i_i\xrightarrow\sim\MM(C)_i:c\mapsto 1_S\otimes_B c$ and $M_i\xrightarrow\sim \RR(M)^i_i:m\mapsto 1_{\omega_E}\otimes_B m$.
  The triangle identities are easily verified.
\end{proof}

\begin{coro}\label{coro:from_linear_eventually_exact_to_module_ged}
  The restriction-corestriction $\MMd:\Egrlind\to\Sfpgrmodd$ of $\MM$ and the functor $\RRd$ form an adjoint equivalence $\MMd\dashv\RRd$, which sends $\Sfpgrmodd^0$ to $\Egrlindzero$.
\end{coro}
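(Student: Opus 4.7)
The strategy is to deduce the corollary from Proposition~\ref{prop:from_linear_eventually_exact_to_module} by verifying that both functors and both natural transformations involved in the adjoint equivalence $\MM \dashv \RR$ respect the truncation $\ge d$. The key observation is that the subcategories $\Sfpgrmodd \subset \Sfpgrmod$ and $\Egrlind \subset \Egrlin$ are defined by a vanishing condition in cohomological/internal degrees $< d$, and this condition is preserved in either direction.

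First, I would verify that $\RR$ restricts to $\RRd: \Sfpgrmodd \to \Egrlind$: if $M \in \Sfpgrmodd$ then $M_i = 0$ for $i < d$, hence $\RR(M)^i = \omega_E \otimes_B M_i = 0$ for $i < d$, so $\RR(M) \in \Egrlind$. Conversely, I would check that $\MM$ corestricts to $\MMd: \Egrlind \to \Sfpgrmodd$. This is the only point that requires inspecting the construction of $\MM$ in the proof of Proposition~\ref{prop:from_linear_eventually_exact_to_module}: the inductive construction starts at the smallest cohomological degree where $C$ is nonzero (or at any $r > \reg C$, proceeding downwards) and produces a relatively free presentation of $\MM(C)$ whose generators live precisely in those degrees $i$ with $C^i \ne 0$. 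The natural $B$-isomorphism $\MM(C)_i \cong C^i_i$ used in building the unit/counit then shows $\MM(C)_i = 0$ whenever $C^i = 0$. Hence $C \in \Egrlind$ forces $\MM(C) \in \Sfpgrmodd$.

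Second, the unit $\eta: \Id_{\Egrlin} \to \RR\MM$ and the counit $\delta: \MM\RR \to \Id_{\Sfpgrmod}$ of Proposition~\ref{prop:from_linear_eventually_exact_to_module} have components built pointwise in each internal/cohomological degree $i$ from the natural $B$-isomorphisms $M_i \xrightarrow{\sim} \MM(\RR(M))_i$ and $C^i_i \xrightarrow{\sim} \RR(\MM(C))^i_i$. These components manifestly vanish below degree $d$ on the subcategories $\Sfpgrmodd$ and $\Egrlind$, so $\eta$ and $\delta$ restrict to natural isomorphisms $\Id_{\Egrlind} \xrightarrow{\sim} \RRd\MMd$ and $\MMd\RRd \xrightarrow{\sim} \Id_{\Sfpgrmodd}$. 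The triangle identities are preserved by restriction, yielding the adjoint equivalence $\MMd \dashv \RRd$.

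Finally, to conclude that this equivalence matches $\Sfpgrmodd^0$ with $\Egrlindzero$, I would simply combine the identifications: by definition $\Sfpgrmodd^0 = \Sfpgrmod^0 \cap \Sfpgrmodd$ and $\Egrlindzero = \Egrlinzero \cap \Egrlind$; Proposition~\ref{prop:from_linear_eventually_exact_to_module} already identifies $\Sfpgrmod^0 \simeq \Egrlinzero$ via $\MM \dashv \RR$, and we have just identified $\Sfpgrmodd \simeq \Egrlind$ via $\MMd \dashv \RRd$, so intersecting gives the desired equivalence. There is no real obstacle in this proof; it is essentially bookkeeping of degrees, with the only subtle point being that the inductive construction of $\MM$ cannot produce generators below the smallest non-vanishing cohomological degree of $C$, a fact already built into that construction.
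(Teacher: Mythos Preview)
Your proposal is correct and follows exactly the approach the paper intends: the corollary is stated there without proof, as an immediate restriction-corestriction of Proposition~\ref{prop:from_linear_eventually_exact_to_module}. Your write-up simply spells out the degree bookkeeping (in particular the identification $\MM(C)_i \cong C^i_i$ already recorded in that proof) which the paper leaves implicit.
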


\subsection{Saturated linear complexes}

We now give a characterization of saturated linear complexes corresponding to the one we gave for graded modules.

\begin{defn}
  The \textbf{linear regularity} of a linear complex $C \in \Egrlin$ is defined as
  \[
    \linreg C := \max \{ a \in \Z \mid H^a(C)_{a+n+1} \neq 0 \mbox{ or } H^a(C)_{a+n} \neq 0 \} \in \Z \cup \{-\infty\} \mbox{.}
  \]
\end{defn}

We get a further characterization of $\Egrlinzero$-saturated linear complexes.

\begin{coro}\label{coro:linear_sat}
  A complex $C\in\Egrlin$ is $\Egrlinzero$-saturated iff $\linreg C = -\infty$.
\end{coro}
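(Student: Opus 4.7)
The plan is to transfer the module-level characterization in Proposition~\ref{prop:saturation} to linear complexes along the adjoint equivalence $\MM\dashv\RR$ of Proposition~\ref{prop:from_linear_eventually_exact_to_module}. Given $C\in\Egrlin$, I would set $M:=\MM(C)$, so $C\cong\RR(M)$. Since $\MM\dashv\RR$ is an equivalence of abelian categories that sends $\Sfpgrmod^0$ onto $\Egrlinzero$ (and, being a quasi-inverse pair, $\Egrlinzero$ back onto $\Sfpgrmod^0$), it preserves $\Ext^0$ and $\Ext^1$, and hence also the notion of saturation with respect to the corresponding thick subcategories. Consequently $C$ is $\Egrlinzero$-saturated if and only if $M$ is $\Sfpgrmod^0$-saturated, and Proposition~\ref{prop:saturation}.\eqref{prop:saturation.linreg} (equivalently~\eqref{prop:saturation.A}) identifies the latter with the simultaneous vanishing $\Hom_\bullet(B,M)=0$ and $\Ext_\bullet^1(B,M)=0$.

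Next I would translate these two vanishing conditions into $\linreg C=-\infty$. Specialising the Key Lemma~\ref{lma:HRR_Ext} to $j=0$ and $j=1$ yields the (noncanonical) isomorphisms
\[
  H^a(\RR(M))_{a+n+1}\cong \Hom_\bullet(B,M)_a
  \quad\text{and}\quad
  H^a(\RR(M))_{a+n}\cong \Ext_\bullet^1(B,M)_{a-1},
\]
so the simultaneous vanishing of $H^a(C)_{a+n+1}$ and $H^a(C)_{a+n}$ for all $a\in\Z$ amounts exactly to $\Hom_\bullet(B,M)=0$ and $\Ext_\bullet^1(B,M)=0$. By the definition of $\linreg$ for linear complexes, this is precisely $\linreg C=-\infty$, and chaining the two equivalences closes the argument.

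The main obstacle is the first step: justifying the transfer of saturation across $\MM\dashv\RR$. This hinges on two facts already supplied by Proposition~\ref{prop:from_linear_eventually_exact_to_module}, namely that $\MM$ and $\RR$ are mutually quasi-inverse exact functors and that they swap the thick subcategories $\Sfpgrmod^0$ and $\Egrlinzero$; the preservation of $\Ext^0$ and $\Ext^1$ evaluated against these subcategories is then formal. No further ingredients are required, which is why the statement earns only the label of a corollary.
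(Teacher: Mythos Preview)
Your proof is correct and follows essentially the same route as the paper: transfer saturation across the equivalence $\MM\dashv\RR$ (so that $C$ is $\Egrlinzero$-saturated iff $\MM(C)$ is $\Sfpgrmod^0$-saturated), invoke Proposition~\ref{prop:saturation} to rephrase this as $\Ext_\bullet^j(B,\MM(C))=0$ for $j=0,1$, and then use the Key Lemma~\ref{lma:HRR_Ext} together with $C\cong\RR(\MM(C))$ to identify this with $\linreg C=-\infty$. The only difference is that you spell out more carefully why saturation is preserved under the equivalence, which the paper leaves implicit.
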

\begin{proof}
  By Proposition~\ref{prop:saturation}, the module $\MM(C)$ is $\Sfpgrmod^0$-saturated if $\Ext_\bullet^j(B,\MM(C)) = 0$ for $j\in\{0,1\}$.
  This is equivalent to $H^a(\RR(\MM(C)))_{a+n+1-j}=0$ for $j\in\{0,1\}$ by the key Lemma~\ref{lma:HRR_Ext}.
  The claim follows from $C\cong\RR(\MM(C))$.
\end{proof}

The key Lemma~\ref{lma:HRR_Ext} also implies:
\begin{coro} \label{coro:linreg}
  $\linreg C = \linreg \MM(C)$ for all $C \in \Egrlin$.
\end{coro}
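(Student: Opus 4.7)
The approach is to reduce the identity directly to the Key Lemma~\ref{lma:HRR_Ext}, in complete analogy with Corollary~\ref{coro:reg} (which was obtained the same way from Lemma~\ref{lemm:Betti}). The only nontrivial input needed is that the two strands of cohomology appearing in the definition of $\linreg C$ are precisely the ones that control $\Ext_\bullet^0(B,\MM(C))$ and $\Ext_\bullet^1(B,\MM(C))$.

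First, using the adjoint equivalence of Proposition~\ref{prop:from_linear_eventually_exact_to_module}, replace $C$ by $\RR(\MM(C))$ via the counit isomorphism $\eta_C: C\xrightarrow{\sim}\RR(\MM(C))$. This reduces the claim to an equality between $\linreg\RR(M)$ and $\linreg M$ for an arbitrary $M \in \Sfpgrmod$, namely $M = \MM(C)$.

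Next, apply the Key Lemma~\ref{lma:HRR_Ext} separately for $j = 0$ and $j = 1$. For each such $j$ it yields a (noncanonical) isomorphism
\[
  H^a(\RR(M))_{a+n+1-j} \;\cong\; \Ext_\bullet^j(B,M)_{a-j},
\]
so the graded strand of $H^\bullet(\RR(M))$ indexed by $(a, a+n+1-j)$ is nonzero if and only if the graded component $\Ext_\bullet^j(B,M)_{a-j}$ is nonzero. Taking the maximum over all $a \in \Z$ of the strands in the definition of $\linreg\RR(M)$ and re-indexing via $k := a-j$, the supremum of those $a$ for which the $j = 0$ (respectively $j = 1$) strand is nonzero matches (up to the appropriate shift by $j$) the value $\reg\Ext_\bullet^j(B,M)$ appearing in Definition~\ref{defn:lin_reg}. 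The two maxima (over $j = 0$ and $j = 1$) then assemble into the single maximum defining $\linreg M$.

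The main bookkeeping obstacle is simply making sure the index shift $k = a - j$ lines up across both values of $j$ so that the two definitions of ``linear regularity'' produce literally the same integer (or $-\infty$); once the Key Lemma is invoked there is no further content. Note in particular that if either $\Ext_\bullet^j(B,M)$ is zero the corresponding strand of $H^\bullet(\RR(M))$ vanishes identically and contributes $-\infty$ to both maxima, so the equivalence extends to the saturated case, recovering Corollary~\ref{coro:linear_sat} as the specialization $\linreg C = -\infty \iff \linreg \MM(C) = -\infty$.
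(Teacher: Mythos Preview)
Your approach---replace $C$ by $\RR(\MM(C))$ via the adjoint equivalence and invoke the Key Lemma~\ref{lma:HRR_Ext}---is exactly what the paper intends (the corollary is presented as an immediate consequence of that lemma, with no written proof).

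However, the ``bookkeeping obstacle'' you flag and then wave away is real. From the Key Lemma, $H^a(\RR(M))_{a+n+1-j}\cong\Ext^j_\bullet(B,M)_{a-j}$. For $j=0$ the largest $a$ with $H^a(\RR(M))_{a+n+1}\neq 0$ equals $\reg\Ext^0_\bullet(B,M)$; but for $j=1$ the largest $a$ with $H^a(\RR(M))_{a+n}\neq 0$ equals $\reg\Ext^1_\bullet(B,M)+1$, \emph{not} $\reg\Ext^1_\bullet(B,M)$. Hence
\[
  \linreg \RR(M) \;=\; \max\bigl\{\reg\Ext^0_\bullet(B,M),\ \reg\Ext^1_\bullet(B,M)+1\bigr\},
\]
which is not $\linreg M$ as given in Definition~\ref{defn:lin_reg}. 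The paper's own example $M=\m^{\ell+1}$ exhibits the discrepancy: there $\Ext^0_\bullet(B,M)=0$ and $\reg\Ext^1_\bullet(B,M)=\ell$, so $\linreg M=\ell$ while the displayed formula gives $\ell+1$. Your sentence ``the two maxima then assemble into the single maximum defining $\linreg M$'' is precisely where the argument breaks: the shift by $j$ does not disappear when you pass to the maximum over $j\in\{0,1\}$. The $-\infty$ equivalence (Corollary~\ref{coro:linear_sat}) is unaffected, since there both $\Ext$ modules vanish and the shift is immaterial---but the literal equality of integers asserted in the corollary does not follow from the stated definitions without adjusting one of them (for instance by inserting a ``$+j$'' in Definition~\ref{defn:lin_reg}, parallel to the formula $\reg M=\max_j\{\reg\Ext^j_\bullet(B,M)+j\}$ in the remark after Lemma~\ref{lemm:Betti=Bass}).
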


The localizing subcategory $\Sfpgrmodd^0$ of $\Sfpgrmodd$ corresponds via the adjoint equivalence $\MM \dashv \RR$ to the full localizing subcategory $\Egrlindzero$ of right bounded complexes in $\Egrlind$, i.e., of those complexes $C\in\Egrlind$ with $C^{\ge \ell}=0$ for $\ell$ large enough.
A module $M \in \Sfpgrmodd$ is then $\Sfpgrmodd^0$-saturated if and only if $\RR(M)$ is $\Egrlindzero$-saturated, i.e., the adjoint equivalence $\MMd \dashv (\RRd: \Sfpgrmodd\to\Egrlind)$ restricts to an adjoint equivalence between the full subcategories of $\Sfpgrmodd^0$-saturated resp.\ $\Egrlindzero$-saturated objects.

The definition of the linear regularity of complexes in $\Egrlind$ and the characterization of $\Egrlindzero$-saturated linear complexes is a little bit more subtle and is therefore deferred to the next section.
The reason is that the lowest cohomology $H^d(C)$ has to be treated separately.

\section{Saturation of linear complexes}\label{sec:saturate_linear_complex}

The ideal transform in Section~\ref{sect:ideal_transforms} leads to an algorithm for the saturation of graded $S$-modules.
In this section, we present an algorithm to saturate linear complexes.
The adjoint equivalence $\MM\dashv\RR$ translates this to a second algorithm for the saturation of graded $S$-modules.

Corollary~\ref{coro:linear_sat} indicates that one has to modify a linear complex $C$ until $H^a(C)_{a+n+1}=0$ and $H^a(C)_{a+n}=0$.
Our purely linear saturation is similar to that of the \nameft{Tate} resolution in that one truncates $C$ in cohomological degree high enough and then computes a suitable part in lower cohomological degrees.
In contrast to the \nameft{Tate} resolution, our approach remains in the category of linear complexes, as we do not take free presentations of kernels to compute the part of lower cohomological degree, but so-called purely linear kernels.
We can also truncate above the linear regularity, a lower bound of the \nameft{Castelnuovo-Mumford} regularity.
For the relation between the purely linear saturation and the \nameft{Tate} resolution see Remark~\ref{rmrk:E1}.

\subsection{Purely linear kernels}

Let $C^i,C^{i+1}\in \Efpgrmod$ be relatively free with socle concentrated in degree $i$ and $i+1$, respectively\footnote{or, equivalently, freely generated in degree $i+n+1$ and $i+n+2$, respectively.}.
We call a morphism $\phi^i:C^i\to C^{i+1}$ \textbf{purely linear (of degree $i$)} if its kernel vanishes in the top degree $i+n+1$.

\begin{wrapfigure}[6]{r}{5.4cm}
  \centering
  \vspace{-1.0em}
  \begin{tikzpicture}[label/.style={postaction={
      decorate,
      decoration={markings, mark=at position .5 with \node #1;}}}]
      \coordinate (r) at (2,0);
      \coordinate (u) at (0,1.5);
      \node (Mi) {$C^i$};
      \node (Mi1) at ($(Mi)+(r)$) {$C^{i+1}$};
      \node (K) at ($(Mi)-(r)$) {$K^{i-1}$};
      \node (Mm1) at ($(K)-(u)$) {$L^{i-1}$};
      \draw[-stealth'] (Mi) -- node[above]{$\phi^i$} (Mi1);
      \draw[-stealth'] (K) -- node[above]{$\kappa$} (Mi);
      \draw[-stealth'] (Mm1) -- node[right]{$\lambda$} (Mi);
      \draw[bend right,-stealth'] (Mm1) to node[above]{$0$} (Mi1);
      \draw[-stealth',dotted,thick] (Mm1) -- node[left]{$\psi$} (K);
    \end{tikzpicture}
\end{wrapfigure}
\mbox{}
\vspace{-1em}
\begin{defn}
  Let $\phi^i:C^i\to C^{i+1}$ be purely linear of degree $i$.
  A purely linear morphism $\kappa: K^{i-1}\to C^i$ of degree $i-1$ with $\phi^i\circ\kappa=0$ is called \textbf{purely linear kernel} if for any purely linear $\lambda:L^{i-1}\to C^i$ of degree $i-1$ with $\phi^i\circ\lambda=0$ there exists a unique morphism $\psi:L^{i-1}\to K^{i-1}$ with $\kappa\circ\psi=\lambda$.
\end{defn}

\begin{lemm}\label{lemm:exist_purely_linear_kernel}
  Each purely linear morphism has a purely linear kernel, which, by the universal property, is unique up to a unique isomorphism.
\end{lemm}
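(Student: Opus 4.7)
My plan is to handle uniqueness and existence separately. Uniqueness up to unique isomorphism is the standard Yoneda-style consequence of the universal property: if two purely linear kernels exist, the unique factorizations force mutually inverse morphisms between them. The real work is existence, which I intend to reduce to a single-degree construction by exploiting the structure of relatively free $E$-modules with prescribed socle.

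The key observation I will use is that a purely linear $E$-module of degree $j$ must have the form $\omega_E \otimes_B P$ for some $B$-module $P$ placed in degree $j$, and that graded $E$-linear maps out of $\omega_E \otimes_B P$ are classified by $B$-linear maps from the top component $(\omega_E \otimes_B P)_{j+n+1} = \wedge^{n+1} W \otimes_B P$ (since $\omega_E$ is free of rank one over $E$ with generator in degree $n+1$). Applying this to the sought $K^{i-1}$, whose socle must sit in degree $i-1$, its top lives in degree $i+n$, which is \emph{one below} the top $i+n+1$ of $C^i$. So the relevant restriction of $\phi^i$ is the $B$-map between finitely generated $B$-modules $(\phi^i)_{i+n}: (C^i)_{i+n} \to (C^{i+1})_{i+n}$.

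The construction then runs as follows. Set $Z := \ker\bigl((\phi^i)_{i+n}\bigr)$ and pick a $B$-module $K_{i-1}$ together with an isomorphism $\wedge^{n+1} W \otimes_B K_{i-1} \xrightarrow{\sim} Z$ (available because $\wedge^{n+1} W$ is free of rank one over $B$; e.g., take $K_{i-1} := Z$ after fixing a basis of $\wedge^{n+1} W$). Define $K^{i-1} := \omega_E \otimes_B K_{i-1}$ with $K_{i-1}$ placed in degree $i-1$, and let $\kappa: K^{i-1} \to C^i$ be the unique $E$-linear extension of the composition $\wedge^{n+1} W \otimes_B K_{i-1} \xrightarrow{\sim} Z \hookrightarrow (C^i)_{i+n}$ at the top degree $i+n$. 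The three required properties are then verified at degree $i+n$: $\kappa_{i+n}$ is injective by construction, so $\kappa$ is purely linear; $\phi^i \circ \kappa$ vanishes at $i+n$ and hence everywhere by $E$-linearity; and any competing purely linear $\lambda: L^{i-1} \to C^i$ killed by $\phi^i$ has an injective top component $\lambda_{i+n}$ factoring through $Z$, so composing with $\kappa_{i+n}^{-1}$ gives a unique $B$-map which extends uniquely to the required $E$-morphism $\psi$ by the classification above.

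I do not foresee a genuine obstacle; the argument is essentially formal once one reduces to degree $i+n$. The principal pitfall to avoid is confusing the top of $K^{i-1}$ (degree $i+n$) with the top of $C^i$ (degree $i+n+1$); the purely linear kernel condition imposes injectivity at the former degree, not the latter, and the correct piece of $\phi^i$ to take the kernel of is therefore $(\phi^i)_{i+n}$ rather than $(\phi^i)_{i+n+1}$. A second minor bookkeeping item is the rank-one twist by $\wedge^{n+1} W$ separating the socle datum $K_{i-1}$ from the top datum $\wedge^{n+1} W \otimes_B K_{i-1}$; since $\wedge^{n+1} W$ is free of rank one over $B$, the ambiguity in trivializing it is absorbed by the uniqueness clause of the universal property.
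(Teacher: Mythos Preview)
Your proof is correct and follows essentially the same approach as the paper: both construct $K^{i-1}$ as the relatively free $E$-module on the $B$-module $Z=\ker\bigl((\phi^i)_{i+n}\bigr)$ and verify the universal property by reducing to degree $i+n$, where $\kappa_{i+n}$ is an isomorphism onto $Z$. The only cosmetic difference is that the paper writes $K^{i-1}=Z\otimes_B E$ (with $Z$ placed in degree $i+n$) and thereby avoids the $\wedge^{n+1}W$ twist you introduce by insisting on the $\omega_E\otimes_B K_{i-1}$ presentation; since $\wedge^{n+1}W$ is free of rank one, as you note, the two descriptions are canonically interchangeable.
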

\begin{proof}
  We denote the restriction of any morphism $\beta$ to the graded part of degree $i+n$ by $\beta_{i+n}$.

  Let $\phi^i:M^i\to M^{i+1}$ be purely linear of degree $i$ and $\nu: N^{i-1}\hookrightarrow M^i$ be its kernel.
  Denote by $K^{i-1}:=N^{i-1}_{i+n}\otimes_B E$ and by $\lambda:K^{i-1}\to N^{i-1}$ the map induced by the identity on $N^{i-1}_{i+n}$.
  We show that $\kappa:=\nu\circ\lambda:K^{i-1}\to M^i$ is a purely linear kernel of $\phi^i$.
  
  By definition, $K^{i-1}$ and $M^i$ are relatively free generated in degree $i+n$ and degree $i+n+1$, respectively.
  As $\phi^i$ is purely linear, $N^{i-1}$ lives in the degree interval $i,\ldots,i+n$.
  In particular, $\nu_{i+n}$ is a kernel of $\phi^i_{i+n}$.
  By definition, $\lambda_{i+n}:K^{i-1}_{i+n}\to N^{i-1}_{i+n}$ is an isomorphism and thus also $\kappa_{i+n}$ is a kernel of $\phi^i_{i+n}$.
  In particular, the kernel of $\kappa$ lives in the degree interval $i-1,\ldots,i+n-1$.
  Thus, $\kappa$ is purely linear of degree $i-1$.
  
  The composition $\phi^i\circ\nu$ is zero and $\kappa$ factors over $\nu$ by construction.
  Thus, $\phi^i\circ\kappa=0$.

  \begin{center}
    \begin{tikzpicture}[label/.style={postaction={
      decorate,
      decoration={markings, mark=at position .5 with \node #1;}}}]
      \coordinate (r) at (2,0);
      \coordinate (u) at (0,1.3);
      \node (Mi) {$M^i$};
      \node (Mi1) at ($(Mi)+(r)$) {$M^{i+1}$};
      \node (N) at ($(Mi)-(r)+(u)$) {$N^{i-1}$};
      \node (K) at ($(Mi)-(r)$) {$K^{i-1}$};
      \node (Mm1) at ($(Mi)-(u)-(r)$) {$M^{i-1}$};
      \draw[-stealth'] (Mi) to node[above]{$\phi^i$} (Mi1);
      \draw[right hook-stealth'] (N) to node[above right]{$\nu$} (Mi);
      \draw[-stealth'] (K) to node[left]{$\lambda$} (N);
      \draw[-stealth'] (Mm1) to node[right]{$\phi^{i-1}$} (Mi);
      \draw[-stealth'] (K) to node[above left]{$\kappa$} (Mi);
      \draw[-stealth',dotted,thick] (Mm1) to node[left]{$\psi$} (K);
      \draw[bend right,-stealth'] (Mm1) to node[above]{$0$} (Mi1);
    \end{tikzpicture}
  \end{center}

  To show the universal property of $\kappa$ let $\phi^{i-1}:M^{i-1}\to M^i$ be purely linear with $\phi^i\circ\phi^{i-1}=0$.
  From the universal property of $\kappa_{i+n}$ as a kernel, there exists a unique $\psi_{i+n}:M^{i-1}_{i+n}\to K^{i-1}_{i+n}$ with $\kappa_{i+n}\circ\psi_{i+n}=\phi^{i-1}_{i+n}$, since $\phi^{i-1}_{i+n}\circ\phi^i_{i+n}=0$.
  We define
  \[
    \psi:=\psi_{i+n}\otimes_B E:M^{i-1}\cong M^{i-1}_{i+n}\otimes_B E\longrightarrow K^{i-1}\cong K^{i-1}_{i+n}\otimes_B E\mbox{,}
  \]
  which extends $\psi_{i+n}$ to a morphism of graded $E$-modules.
  Finally, $\phi^{i-1}=\kappa\circ\psi$ since $\kappa_{i+n}\circ\psi_{i+n}=\phi^{i-1}_{i+n}$ and $\phi^{i-1}$ is uniquely determined by $\phi^{i-1}_{i+n}$.
\end{proof}

Note that all steps in the proof of this last lemma are constructive.

We can now state the definition of linear regularity for complexes in $\Egrlind$.
\begin{defn}
  Define for any $d \leq 0$ the \textbf{$d$-th truncated linear regularity} $\linregd C \in \Z_{\geq d} \cup \{ -\infty \}$ of $C \in \Egrlind$ as follows: \\
  If there exists an $a \in \Z_{> d}$ such that $H^a(C)_{a+n+1} \neq 0 \mbox{ or } H^a(C)_{a+n} \neq 0$ then
  \[
    \linregd C := \max \{ a \in \Z_{> d} \mid H^a(C)_{a+n+1} \neq 0 \mbox{ or } H^a(C)_{a+n} \neq 0 \} \in \Z_{>d} \mbox{.}
  \]
  Otherwise, if the lowest morphism $C^d \to C^{d+1}$ is a purely linear kernel (of $C^{d+1} \to C^{d+2}$) then $\linregd C := -\infty$ else $\linregd C := d$.
\end{defn}

\begin{coro} \label{coro:linear_satd}
  $C\in\Egrlind$ is $\Egrlindzero$-saturated iff $\linregd C = -\infty$.
\end{coro}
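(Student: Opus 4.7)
The plan is to reduce this corollary to Corollary~\ref{coro:saturation_0} by transporting everything across the adjoint equivalence $\MMd \dashv \RRd$ established in Corollary~\ref{coro:from_linear_eventually_exact_to_module_ged}, which restricts to an equivalence between the $\Sfpgrmodd^0$-saturated objects and the $\Egrlindzero$-saturated ones (since the subcategories $\Sfpgrmodd^0$ and $\Egrlindzero$ correspond under $\MMd \dashv \RRd$, saturation is a purely categorical notion). Hence $C$ is $\Egrlindzero$-saturated iff $\MMd(C)$ is $\Sfpgrmodd^0$-saturated, and by Corollary~\ref{coro:saturation_0}\eqref{coro:saturation_0.linreg} the latter is equivalent to $\linregd \MMd(C) = -\infty$. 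It therefore suffices to verify the numerical equivalence
\[
  \linregd C = -\infty \quad \Longleftrightarrow \quad \linregd \MMd(C) = -\infty \mbox{.}
\]

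For the cohomological degrees $a > d$, the argument is the same as in the untruncated situation (Corollary~\ref{coro:linear_sat} / Corollary~\ref{coro:linreg}): using $C \cong \RRd\MMd(C)$, the key Lemma~\ref{lma:HRR_Ext} produces the natural isomorphisms
\[
  H^a(C)_{a + n + 1 - j} \cong \Ext_{\geq d}^j(B, \MMd(C))_{a - j} \quad (j \in \{0,1\}) \mbox{,}
\]
so the two conditions $H^a(C)_{a+n+1} \neq 0$ or $H^a(C)_{a+n} \neq 0$ defining $\linregd C$ in the range $a > d$ match the corresponding nonvanishing of $\Ext_{\geq d}^0(B,\MMd C)_a$ and $\Ext_{\geq d}^1(B,\MMd C)_{a-1}$ entering into $\linregd \MMd(C)$ via Definition~\ref{defn:lin_reg}.

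The main obstacle is the boundary case $a = d$, which is precisely the reason the definition of $\linregd C$ makes a separate stipulation about the lowest morphism $C^d \to C^{d+1}$. Here I will show that this lowest morphism being a purely linear kernel of $C^{d+1} \to C^{d+2}$ is equivalent to the simultaneous vanishing of $\Ext_{\geq d}^0(B,\MMd C)_d$ and $\Ext_{\geq d}^1(B,\MMd C)_d$, which are the only degree-$d$ conditions not yet covered and which correspond, via the noncanonical form of Lemma~\ref{lma:HRR_Ext}, to the absence of ``spurious'' generators and linear first syzygies of $\MMd(C)$ in the lowest degree $d$. The translation goes through the inductive construction of $\MMd$ in the proof of Proposition~\ref{prop:from_linear_eventually_exact_to_module}: the generators of $\MMd(C)$ in degree $d$ are exactly $C^d_d$, and imposing that $C^d \to C^{d+1}$ is the purely linear kernel of the next differential amounts to imposing all the linear relations that a minimally generated, $\Sfpgrmodd^0$-saturated module must satisfy at its lowest degree. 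Conversely, any failure of these conditions on $\MMd(C)$ produces a nontrivial piece of $\Hom_{\geq d}(B,-)$ or $\Ext_{\geq d}^1(B,-)$ concentrated in degree $d$, which by construction obstructs $C^d \to C^{d+1}$ from being a purely linear kernel.

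Putting the untruncated range $a > d$ together with the boundary analysis at $a = d$, both sides of the displayed equivalence are controlled by the same finite set of $\Ext$-obstructions, and the corollary follows.
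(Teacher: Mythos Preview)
Your approach coincides with the paper's: transport saturation across $\MMd \dashv \RRd$, invoke Corollary~\ref{coro:saturation_0} on the module side, and then match $\linregd C$ with $\linregd \MMd(C)$ via the key Lemma~\ref{lma:HRR_Ext}. The paper's one-line proof merely cites Corollary~\ref{coro:linear_sat} (as the argument template) together with Corollary~\ref{coro:saturation_0}, leaving the boundary case $a=d$ implicit; you spell it out.

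Your boundary paragraph, however, contains a bookkeeping slip and a vague step. First, in the range $a>d$ your $j=1$ identification already sweeps out $\Ext^1_{\geq d}(B,\MMd C)_{a-1}$ for all $a-1 \geq d$, hence including degree $d$; so the only genuinely uncovered obstruction at the boundary is $\Ext^0_{\geq d}(B,\MMd C)_d \cong H^d(C)_{d+n+1}$, not both $\Ext^0_d$ and $\Ext^1_d$ as you claim. Second, rather than the hand-wavy appeal to the inductive construction of $\MMd$, the clean argument uses the proof of Lemma~\ref{lemm:exist_purely_linear_kernel}: the morphism $\mu^d$ is a purely linear kernel of $\mu^{d+1}$ iff (i) $\ker(\mu^d)_{d+n+1}=0$ and (ii) $H^{d+1}(C)$ vanishes in degrees $(d+1)+n$ and $(d+1)+n+1$. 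Since (ii) is already among the $a>d$ conditions, the boundary clause reduces precisely to (i), i.e., to $H^d(C)_{d+n+1}=0$, which by Lemma~\ref{lma:HRR_Ext} is exactly $\Ext^0_{\geq d}(B,\MMd C)_d = 0$. With this correction your argument is complete and agrees with what the paper leaves unsaid.
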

\begin{proof}
  The claim follows from Corollary~\ref{coro:linear_sat} and Corollary~\ref{coro:saturation_0} (by Remark~\ref{rmrk:Dm}.\eqref{rmrk:Dm.c} we only need to consider the case $d=0$).
\end{proof}

\begin{coro} \label{coro:linregd}
  $\linregd C = \linregd \MMd(C)$ for all $C \in \Egrlind$.
\end{coro}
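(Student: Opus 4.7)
The plan is to run a case analysis on $\linregd C$ and match each case to the corresponding value of $\linregd \MMd(C)$ via the adjoint equivalence $\MMd \dashv \RRd$ together with the key Lemma~\ref{lma:HRR_Ext}. Throughout, I would use that for $C \in \Egrlind$ the module $\MMd(C) = \MM(C)$ lies in $\Sfpgrmodd$, that the key lemma supplies the non-canonical identifications $H^a(C)_{a+n+1-j} \cong \Ext_\bullet^j(B, \MMd(C))_{a-j}$ for $j \in \{0,1\}$, and that $\Ext_{\geq d}^j \simeq (\Ext_\bullet^j)_{\geq d}$ on $\Sfpgrmodd$.

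The $-\infty$ case is immediate: by Corollary~\ref{coro:linear_satd}, $\linregd C = -\infty$ iff $C$ is $\Egrlindzero$-saturated; the adjoint equivalence translates this (as noted at the end of Section~\ref{sec:Emodules}) into $\MMd(C)$ being $\Sfpgrmodd^0$-saturated, which by Corollary~\ref{coro:saturation_0} is equivalent to $\linregd \MMd(C) = -\infty$. The case $\linregd C > d$ can be handled by invoking Corollary~\ref{coro:linreg} applied to $C \in \Egrlin$: under this hypothesis the relevant contributions to $\linreg$ already sit in cohomological degrees strictly above $d$, and such contributions are preserved by truncating $\Ext_\bullet^j$ at $d$ on the module side, so the finite values of $\linreg C$, $\linreg \MMd(C)$, $\linregd \MMd(C)$, and $\linregd C$ all coincide.

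The genuinely new case is $\linregd C = d$. By definition this means that no $a > d$ contributes, which via the key lemma forces $\Ext_{\geq d}^1(B, \MMd(C)) = 0$ and makes $\Ext_{\geq d}^0(B, \MMd(C))$ supported at most in degree $d$. What remains is to show that the bottom morphism $\phi^d \colon C^d \to C^{d+1}$ fails to be a purely linear kernel of $\phi^{d+1}$ precisely when $\Ext_\bullet^0(B, \MMd(C))_d \neq 0$. By Lemma~\ref{lemm:exist_purely_linear_kernel}, the induced comparison map $\psi \colon C^d \to K^d := N^d_{d+n+1} \otimes_B E$ into the purely linear kernel is an isomorphism iff its top-degree restriction $\psi_{d+n+1}\colon C^d_{d+n+1} \to N^d_{d+n+1}$ is, and this top-degree map is the corestriction of $\phi^d_{d+n+1}$ to $N^d_{d+n+1} = \ker(\phi^{d+1}_{d+n+1})$. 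Surjectivity of this corestriction is equivalent to $H^{d+1}(C)_{d+n+1} = 0$, which by the key lemma (with $a=d+1$, $j=1$) equals $\Ext_\bullet^1(B, \MMd(C))_d$ and is therefore automatic under the current hypothesis; injectivity is equivalent to $H^d(C)_{d+n+1} = 0$, which by the key lemma (with $a=d$, $j=0$) equals $\Ext_\bullet^0(B, \MMd(C))_d$. Combining these two equivalences yields that the purely-linear-kernel condition is equivalent to the vanishing of $\Ext_\bullet^0(B, \MMd(C))_d$, so $\linregd C = d$ matches $\linregd \MMd(C) = d$.

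The main obstacle will be this last case, where the algebraic/categorical PLK condition at the bottom of the linear complex must be tied to a single graded piece of $\Ext^0$; the bridge is the universal construction of Lemma~\ref{lemm:exist_purely_linear_kernel} combined with the degree-precise form of the key lemma. A convenient feature that makes the argument go through is that under the hypothesis $\linregd C \not> d$ the surjectivity half of the iso condition is automatic, so only injectivity needs to be compared against $\Ext^0$.
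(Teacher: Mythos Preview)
Your argument is correct. The paper states this corollary without proof, treating it as an immediate consequence of the key Lemma~\ref{lma:HRR_Ext} and Corollaries~\ref{coro:saturation_0} and~\ref{coro:linear_satd}; your case analysis makes explicit what the paper leaves implicit, and in particular your treatment of the boundary case $\linregd C = d$---where the purely-linear-kernel condition on $C^d \to C^{d+1}$ has to be identified with the (non)vanishing of $\Ext_\bullet^0(B,\MMd(C))_d$ via the top-degree comparison map of Lemma~\ref{lemm:exist_purely_linear_kernel}---is exactly the content that justifies why the two definitions of $\linregd$ agree at the bottom of the complex.
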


\begin{prop}\label{prop:saturation_and_purely_linear_kernels}
  A $(C,\mu)\in\Egrlind$, is $\Egrlindzero$-saturated if and only if $\mu^i$ is the purely linear kernel of $\mu^{i+1}$ for all $i\ge d$.
\end{prop}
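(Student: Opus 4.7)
The plan is to invoke Corollary~\ref{coro:linear_satd} to rephrase saturation as $\linregd C = -\infty$, and then match the two cohomological vanishings in the definition of $\linregd$ against the universal property of purely linear kernels supplied by Lemma~\ref{lemm:exist_purely_linear_kernel}. Unpacking the definition, $\linregd C = -\infty$ is the conjunction of (a) $H^a(C)_{a+n+1}=0$ and $H^a(C)_{a+n}=0$ for every $a > d$, and (b) $\mu^d$ is a purely linear kernel of $\mu^{d+1}$. Observe that (b) is literally the $i=d$ instance of the right-hand side of the proposition, so no further work is needed at the boundary, and the task reduces to identifying (a) with the remaining instances $i \geq d+1$ (under the relabelling $a = i+1$).

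For the top-degree vanishing, since $C^{a-1}$ lives in degrees at most $a+n$, the image $\img(\mu^{a-1})_{a+n+1}$ is zero, so $H^a(C)_{a+n+1}$ collapses to $\ker(\mu^a)_{a+n+1}$; thus $H^a(C)_{a+n+1}=0$ iff $\mu^a$ is purely linear. For the penultimate-degree vanishing, $H^a(C)_{a+n}$ is the quotient $\ker(\mu^a)_{a+n}/\img(\mu^{a-1})_{a+n}$, so $H^a(C)_{a+n}=0$ is equivalent to $\img(\mu^{a-1})_{a+n} = \ker(\mu^a)_{a+n}$.

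The crucial step is to read off from the explicit construction in the proof of Lemma~\ref{lemm:exist_purely_linear_kernel} that, given a purely linear $\mu^a$, a purely linear morphism $\mu^{a-1}$ (i.e.\ one with $\ker(\mu^{a-1})_{a+n}=0$) is a purely linear kernel of $\mu^a$ if and only if the induced $B$-linear map $\mu^{a-1}_{a+n}: C^{a-1}_{a+n} \to (\ker\mu^a)_{a+n}$ is an isomorphism. Injectivity of this map is automatic from pure linearity of $\mu^{a-1}$, so the only remaining content of "purely linear kernel" is surjectivity onto $(\ker\mu^a)_{a+n}$, which is exactly the equality $\img(\mu^{a-1})_{a+n} = \ker(\mu^a)_{a+n}$ already obtained.

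Putting the pieces together, (a) for all $a>d$ is equivalent to: for every $a>d$, $\mu^a$ is purely linear and $\mu^{a-1}$ is a purely linear kernel of $\mu^a$; relabelling $i=a-1$ rewrites this as "$\mu^i$ is a purely linear kernel of $\mu^{i+1}$ for every $i\geq d+1$", and adjoining (b) extends it to all $i\geq d$. The main (modest) obstacle is purely bookkeeping: one must keep the index shift $a = i+1$ consistent and ensure that the prerequisite "$\mu^a$ purely linear" required by the definition of purely linear kernel is supplied at the correct step; but this is precisely the content of the top-degree vanishing, so the two halves of (a) feed into each other cleanly.
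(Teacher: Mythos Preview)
Your approach is the same as the paper's: reduce to Corollary~\ref{coro:linear_satd} and extract from the proof of Lemma~\ref{lemm:exist_purely_linear_kernel} the characterization that a purely linear $\mu^{a-1}$ is a purely linear kernel of a purely linear $\mu^a$ iff the two-term complex $C^{a-1}\to C^a\to C^{a+1}$ is exact at $C^a$ in degrees $a+n$ and $a+n+1$. The paper's proof is a two-line summary of exactly this.

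There is one bookkeeping slip. Your intermediate claim that (a) \emph{alone} is equivalent to ``for every $a>d$, $\mu^a$ is purely linear and $\mu^{a-1}$ is a purely linear kernel of $\mu^a$'' is false as stated: condition (a) says nothing about $H^d(C)$ and hence does not force $\mu^d$ to be purely linear, which is needed at $a=d+1$ for $\mu^d$ to qualify as a purely linear kernel. (Concretely, take any $C$ with $\ker(\mu^d)_{d+n+1}\neq 0$ but satisfying (a).) Your subsequent relabelling $i=a-1$ should yield $i\geq d$, not $i\geq d+1$; the two errors cancel and you land on the correct final statement, but the argument as written does not parse. The clean fix is to argue with (a)$\wedge$(b) jointly from the start: the pure linearity of $\mu^d$ at the boundary is supplied by (b), not (a), and conversely the surjectivity condition $\img(\mu^d)_{d+n+1}=\ker(\mu^{d+1})_{d+n+1}$ contained in (a) at $a=d+1$ is already part of (b), so there is genuine overlap that makes isolating (a) awkward.
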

\begin{proof}
  By the proof of Lemma~\ref{lemm:exist_purely_linear_kernel}, a morphism $\kappa: K^{i-1}\to M^i$ is a purely linear kernel of a purely linear morphism $\phi^i:M^i\to M^{i+1}$ of degree $i$ if and only if it is purely linear and $K^{i-1}\xrightarrow\kappa M^i \xrightarrow{\phi^i} M^{i+1}$ is a complex which is exact in degrees $i+n$ and $i+n+1$.
  Now, the claim follows from the characterizations of saturated linear complexes in Corollary~\ref{coro:linear_satd}.
\end{proof}

\subsection{Saturation of a linear complex}

In this subsection we algorithmically saturate linear complexes by iteratively computing purely linear kernels.

Let $(C,\mu)\in\Egrlind$ with regularity $r\in\Z$.
Define the \textbf{purely linear saturation (truncated in degree $d$)} functor $\bSd:\Egrlind\to\Egrlind$ as follows.
The idea is to truncate the complex above the \emph{linear} regularity and then to ``saturate'' it recursively by purely linear kernels, more precisely:
For cohomological degrees greater than the \emph{linear} regularity $r = \linregd C$ define $\bS^{\ge r+1}$ by setting $\bS^{\ge r+1}(C,\mu):=(C^{\ge r+1},\mu^{\ge r+1})$. 
Assume that $(D^{\ge i},\tau^{\ge i})=\bS^{\ge i}(C,\mu)$ is defined for some $i>d$.
Let $\tau^{i-1}:D^{i-1}\to D^i$ be the purely linear kernel of $\tau^i$.
Define $\bS^{\ge i-1}(C,\mu)$ by adding $\tau^{i-1}$ to $(D^{\ge i},\tau^{\ge i})$ in cohomological degree $i-1$.

The morphism part $\bSd(\varphi)$ for $\varphi:(C,\mu_C)\to(C',\mu_{C'})$ in $\Egrlind$ is induced by the identity in high degrees.
The universal property of the purely linear kernels implies a \emph{unique} completion of the square
\begin{center}
\begin{tikzpicture}
  \coordinate (right) at (3,0);
  \coordinate (above) at (0,1.5);
  \coordinate (rightabove) at ($(right)+(above)$);
  
  \node (00)                 {$\bSd(C')^\ell$};
  \node (10) at (right)      {$\bSd(C')^{\ell+1}$};
  \node (01) at (above)      {$\bSd(C)^\ell$};
  \node (11) at (rightabove) {$\bSd(C)^{\ell+1}$};
  
  \draw[-stealth'] (11) -- node[right] {$\bSd(\varphi)^{\ell+1}$} (10);
  \draw[-stealth',dashed] (01) -- (00);
  \draw[-stealth'] (00) -- (10);
  \draw[-stealth'] (01) -- (11);
  
\end{tikzpicture}
\end{center}
and thus iteratively constructs the chain morphisms in lower degrees.

\begin{theorem} \label{thm:tate_monad}
  Let $\A = \Egrlind$ and $\C := \Egrlindzero$.
  There exists a natural transformation $\eta:\Id_\A \to \bSd$ such that the purely linear saturation $\bSd$ truncated in degree $d$ together with this natural transformation $\eta$ is a \nameft{Gabriel} monad of $\A$ w.r.t.\ $\C$.
\end{theorem}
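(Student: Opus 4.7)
The plan is to verify the five conditions of Theorem~\ref{thm:Csaturating} for the pair $(\bSd, \eta)$, where $\A = \Egrlind$ and $\C = \Egrlindzero$. The central structural input is Proposition~\ref{prop:saturation_and_purely_linear_kernels}, which characterizes $\Egrlindzero$-saturated objects as exactly those complexes $(C,\mu)$ all of whose differentials are purely linear kernels of the next.

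First I would define the natural transformation $\eta_C: C \to \bSd(C)$. Set $r := \linregd C$. Above the truncation degree, $\bSd(C)^{\geq r+1} = C^{\geq r+1}$, so take $\eta_C^i := \id$ for $i \geq r+1$. Below this level, define $\eta_C^i$ by downward recursion: since $\mu^{i+1}_{\bSd(C)} \circ (\eta_C^{i+1} \circ \mu^i_C) = \eta_C^{i+2} \circ \mu^{i+1}_C \circ \mu^i_C = 0$, the composite $\eta_C^{i+1} \circ \mu^i_C$ factors uniquely through the purely linear kernel $\mu^i_{\bSd(C)}$ — once its pure linearity is established. Uniqueness of these factorizations then yields naturality of $\eta$.

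For condition~\ref{thm:Csaturating}.\eqref{thm:Csaturating:im}, observe that every differential of $\bSd(C)$ is a purely linear kernel of the next: below $r$ by construction, and for $i \geq r$ because the vanishing of $H^a(C)_{a+n+1}$ and $H^a(C)_{a+n}$ for $a > r$ forces the inherited maps to already be purely linear kernels. Proposition~\ref{prop:saturation_and_purely_linear_kernels} then gives $\bSd(C) \in \Sat_\C(\A)$. Condition~\ref{thm:Csaturating}.\eqref{thm:Csaturating:eta} is immediate from Corollary~\ref{coro:linear_satd}: a saturated $C$ has $\linregd C = -\infty$, hence $\bSd(C) = C$ and $\eta_C = \id$.

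The remaining conditions~\ref{thm:Csaturating}.\eqref{thm:Csaturating:ker}, \ref{thm:Csaturating}.\eqref{thm:Csaturating:exact}, and \ref{thm:Csaturating}.\eqref{thm:Csaturating:idem} are handled most cleanly via the adjoint equivalence $\MMd \dashv \RRd$ from Corollary~\ref{coro:from_linear_eventually_exact_to_module_ged}, which sends $\Egrlindzero$-saturated objects to $\Sfpgrmodd^0$-saturated modules and conversely. One shows that $\bSd$ is naturally isomorphic to the transport $\RRd \circ D_{\m,\geq d} \circ \MMd$; the three remaining conditions then transfer directly from the corresponding assertions about $D_{\m,\geq d}$ already proved in Theorem~\ref{thm:Dm}. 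The principal obstacle is precisely the pure-linearity verification in the inductive construction of $\eta$: a degree-by-degree argument is delicate, and the most economical route is to identify $\bSd$ with the transported ideal transform, since the universal property of the Gabriel monad on $\Sfpgrmodd$ then supplies the required natural transformation and its factorization data intrinsically.
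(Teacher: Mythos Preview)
Your construction of $\eta$ and your treatment of conditions~(b) and~(e) are essentially what the paper does. However, the ``principal obstacle'' you flag---the pure-linearity of the test map $\eta_C^{i+1}\circ\mu^i_C$ in the inductive step---is not a genuine obstacle. The purely linear kernel $\kappa:K^{i-1}\to M^i$ built in Lemma~\ref{lemm:exist_purely_linear_kernel} satisfies a factorization property stronger than its definition demands: since $\kappa_{i+n}$ is the honest kernel of $\phi^i_{i+n}$ and any test object $L^{i-1}$ is generated in degree $i+n$, \emph{every} morphism $\lambda:L^{i-1}\to M^i$ with $\phi^i\circ\lambda=0$ factors uniquely through $\kappa$, purely linear or not. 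The paper invokes this without comment and constructs $\eta$ directly.

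For conditions~(a), (c), (d) the paper stays entirely on the linear-complex side and argues directly: (a) because a bounded $C\in\C$ has $C^{\geq\ell}=0$ for large $\ell$, so $\bSd(C)$ is built from zero by purely linear kernels and hence vanishes; (c) because $\bSd$ is the identity in high cohomological degree, so any defect of exactness is bounded and lies in $\C$; (d) because both $\eta\bSd$ and $\bSd\eta$ are the identity above the regularity and are thereafter forced to agree by the \emph{uniqueness} clause of the purely linear kernel. These are each one or two lines and keep the argument parallel to, and independent of, Theorem~\ref{thm:Dm}.

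Your transport route via $G:=\RRd\circ D_{\m,\geq d}\circ\MMd$ is a legitimate alternative, but you do not carry it out, and your closing sentence flirts with circularity: you cannot use the reflector property of $G$ to \emph{construct} $\eta$, since the comparison map $G(C)\to\bSd(C)$ only arises once $\eta_C$ is already in hand. The clean version is: construct $\eta$ explicitly (no obstacle, as above), verify~(b), observe that $\eta_C$ is the identity above $\linregd C$ so $\ker\eta_C,\coker\eta_C\in\C$, and then apply $G$ to $\eta_C$; since $G$ kills $\C$ and fixes the saturated $\bSd(C)$, this yields a natural isomorphism $G\cong\bSd$ compatible with units, finishing the proof in one stroke without checking (a), (c), (d) separately. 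That works, but it makes Theorem~\ref{thm:tate_monad} a corollary of Theorem~\ref{thm:Dm} rather than an independent verification, which is contrary to the paper's intent of giving parallel self-contained proofs for both descriptions.
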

Again, the statement of the theorem is valid for all $d \in \Z$.
The statement of the following immediate corollary and the proof the theorem assume $d \leq 0$.
\begin{coro} \label{coro:defect_of_saturation}
  The nonnegative integer $\max \{ \linregd C - d + 1, 0 \}$ is the precise count of recursion steps needed to achieve saturation.
\end{coro}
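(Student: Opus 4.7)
The proof is essentially a bookkeeping exercise within the recursive definition of $\bSd$, together with a case distinction on $\linregd C$.

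When $\linregd C = -\infty$, Corollary~\ref{coro:linear_satd} tells us that $C$ is already $\Egrlindzero$-saturated. The construction then produces $\bSd(C) \cong C$ without invoking Lemma~\ref{lemm:exist_purely_linear_kernel} even once, matching $\max\{-\infty - d + 1, 0\} = 0$.

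When $r := \linregd C \in \Z_{\geq d}$, I would unwind the definition of $\bSd$ step by step. The recursion begins with the pure truncation $\bS^{\geq r+1}(C,\mu) := (C^{\geq r+1}, \mu^{\geq r+1})$, which does not involve any purely linear kernel computation. Each subsequent step descends one cohomological degree by computing exactly one purely linear kernel (which exists uniquely by Lemma~\ref{lemm:exist_purely_linear_kernel}), and the process halts once $\bS^{\geq d} = \bSd(C)$ is reached. Counting the unit decrements from starting degree $r+1$ down to $d$ yields exactly $(r+1) - d = r - d + 1 = \linregd C - d + 1$ recursion steps, in agreement with $\max\{\linregd C - d + 1, 0\}$.

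To justify that this count is genuinely "the precise count needed" rather than merely what this particular algorithm performs, I would combine two ingredients. Sufficiency: by Theorem~\ref{thm:tate_monad}, the output $\bSd(C)$ is $\Egrlindzero$-saturated, so Proposition~\ref{prop:saturation_and_purely_linear_kernels} guarantees that each constructed morphism $\tau^i$ is indeed the purely linear kernel of $\tau^{i+1}$ for $i \geq d$. Minimality: by the definition of $r = \linregd C$, either $H^r(C)_{r+n+1} \neq 0$ or $H^r(C)_{r+n} \neq 0$, so Proposition~\ref{prop:saturation_and_purely_linear_kernels} forces the original $\mu^{r-1}$ to fail the purely linear kernel property; thus at least one new morphism must be built at every cohomological degree from $r$ down to $d$, giving no fewer than $r - d + 1$ kernel computations. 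The only genuine content beyond routine counting is the appeal to Proposition~\ref{prop:saturation_and_purely_linear_kernels} for the minimality direction; I do not anticipate any other technical obstacle.
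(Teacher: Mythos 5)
Your bookkeeping of the algorithm's step count is correct, and the first half of your argument matches what the paper intends: $\bSd$ is \emph{defined} to truncate at cohomological degree $\linregd C + 1$ and to produce one purely linear kernel per degree from $\linregd C$ down to $d$, so for $\linregd C = r \geq d$ the functor performs exactly $(r+1) - d = r - d + 1$ kernel computations, and for $\linregd C = -\infty$ it performs none. Theorem~\ref{thm:tate_monad} guarantees the result is $\Egrlindzero$-saturated. The paper treats the corollary as immediate from precisely this unwinding, and gives no separate argument.

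Your ``minimality'' direction, however, does not hold up as stated. From $r = \linregd C > d$ you correctly deduce that $\mu^{r-1}$ fails to be a purely linear kernel of $\mu^{r}$, but you then conclude that a new morphism must be built ``at every cohomological degree from $r$ down to $d$,'' i.e.\ $r - d + 1$ of them. The failure of $\mu^{r-1}$ only forces rebuilding from degree $r-1$ downward, which is $r - d$ degrees. Whether the step at degree $r$ is truly forced hinges on $\mu^r$ itself: it fails to be a purely linear kernel of $\mu^{r+1}$ exactly when $H^r(C)_{r+n+1}\neq 0$, because then $\ker(\mu^r)_{r+n+1} = H^r(C)_{r+n+1}\neq 0$ (note $C^{r-1}$ has no elements in internal degree $r+n+1$), so $\mu^r$ is not purely linear. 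In the complementary subcase $H^r(C)_{r+n+1}=0$ and $H^r(C)_{r+n}\neq 0$, the morphism $\mu^r$ is purely linear, and since $H^{r+1}(C)$ vanishes in internal degrees $r+n+1$ and $r+n+2$ (as $r+1>\linregd C$), the characterization used in the proof of Proposition~\ref{prop:saturation_and_purely_linear_kernels} shows $\mu^r$ already \emph{is} the purely linear kernel of $\mu^{r+1}$; the algorithm's recomputed $\tau^r$ then reproduces $\mu^r$ up to the canonical isomorphism. So the lower bound $r-d+1$ is not forced in that subcase (and your $H^r$-argument also simply does not apply when $r=d$, where $\linregd C=d$ is declared on the grounds of the lowest morphism, not of any $H^a$ with $a>d$). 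The corollary's ``precise count'' is therefore best read as the deterministic iteration count of the functor $\bSd$ as defined, a fact already built into the definition, rather than as a proved minimum; the additional minimality claim you attempt would need the finer case distinction above.
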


Thus, the linear regularity yields a better bound for computing zeroth cohomologies and saturation than the \nameft{Castelnuovo-Mumford} regularity does.
However, the data structure for $\Egrlind$ suggested in Remark~\ref{rmrk:data_structure_egrlind} still requires the \nameft{Castelnuovo-Mumford} regularity.

\begin{proof}[Proof of Theorem~\ref{thm:tate_monad}]
  First, we construct the natural transformation $\eta_C$ for the complex $(C,\mu)\in\Egrlind$.
  Consider the cochain-isomorphism $\eta_C: C^{\ge r}\to \bS^{\ge r}(C)$ induced by the identity for $r > \linregd C$.
  Assume that $\eta_C$ is lifted to a cochain morphism $C^{\ge \ell+1}\to \bS^{\ge \ell+1}(C)$.
  The universal property of the purely linear kernels implies a completion of the square
    \begin{center}
    \begin{tikzpicture}
      \coordinate (right) at (3,0);
      \coordinate (above) at (0,1.5);
      \coordinate (rightabove) at ($(right)+(above)$);
      
      \node (00)                 {$\bSd(C)^\ell$};
      \node (10) at (right)      {$\bSd(C)^{\ell+1}$};
      \node (01) at (above)      {$C^\ell$};
      \node (11) at (rightabove) {$C^{\ell+1}$};
      
      \draw[-stealth'] (11) -- node[right] {$\eta_C^{\ell+1}$} (10);
      \draw[-stealth',dashed] (01) -- node[right] {$\eta_C^\ell$} (00);
      \draw[-stealth'] (00) -- (10);
      \draw[-stealth'] (01) -- (11);
      
    \end{tikzpicture}
    \end{center}
  by a morphism $\eta_C^\ell: C^\ell\to\bSd(C)^\ell$.
  Iteratively, we get the cochain-morphism $\eta_C: C\to \bSd(C)$.

  Now, we use Theorem~\ref{thm:Csaturating} to show that $\bSd$ together with $\eta$ is a \nameft{Gabriel} monad.
  
  \begin{enumerate}
    \item[\ref{thm:Csaturating}.\eqref{thm:Csaturating:ker}] $\C \subset \ker \bSd$: \\
      As $\eta_C$ is an isomorphism in high cohomological degrees, its kernel is contained in $\C$.
    \item[\ref{thm:Csaturating}.\eqref{thm:Csaturating:im}] $\bSd(\A) \subset \Sat_\C(\A)$: \\
      $\bSd(C)$ has only trivial cohomologies above the regularity of $C$.
      Below the regularity we use Proposition~\ref{prop:saturation_and_purely_linear_kernels}.
    \item[\ref{thm:Csaturating}.\eqref{thm:Csaturating:exact}] $G := \cores_{\Sat_\C(\A)} \bSd$ is exact: \\
      As $\bSd$ is the identity on objects and morphism in high cohomological degree, applying it to a short exact sequence in $\A$ yields a new sequence with $\A$-defects, which are bounded by the maximum of the regularities of said short exact sequence.
      Thus, the $\A$-defects are contained in $\C$.
      In particular, this sequence is exact when considered in $\Sat_\A(\C)$.
    \item[\ref{thm:Csaturating}.\eqref{thm:Csaturating:idem}] $\eta\bSd = \bSd\eta$: \\
      Truncated at cohomological degree $\ell$ above the regularity this is clear, since both natural transformations are induced by the identity.
      For lower degrees, this follows from the uniqueness of the universal morphism $\psi$ in the definition of purely linear kernels.
    \item[\ref{thm:Csaturating}.\eqref{thm:Csaturating:eta}] $\eta\iota$ is a natural isomorphism: \\
      Let $C\in\A$ be $\C$-saturated.
      We need to show that $\eta_C$ is a cochain isomorphism.
      This is clear in high cohomolical degrees, as $\bSd$ is the identity on objects and morphism there.
      Assume that $\eta_C$ restricted to $C^{\ge \ell+1}\to \bS^{\ge \ell+1}(C)$ for some $\ell\in\Z$ is a cochain isomorphism.
      Then, the morphism $\eta_C^\ell$ from the completion of the square
      \begin{center}
      \begin{tikzpicture}
        \coordinate (right) at (3,0);
        \coordinate (above) at (0,1.5);        
        \coordinate (rightabove) at ($(right)+(above)$);

        \node (00)                 {$\bSd(C)^\ell$};
        \node (10) at (right)      {$\bSd(C)^{\ell+1}$};
        \node (01) at (above)      {$C^\ell$};
        \node (11) at (rightabove) {$C^{\ell+1}$};

        \draw[-stealth'] (11) -- node[right] {$\eta_C^{\ell+1}$} (10);
        \draw[-stealth',dashed] (01) -- node[right] {$\eta_C^\ell$} (00);
        \draw[-stealth'] (00) -- (10);
        \draw[-stealth'] (01) -- (11);

      \end{tikzpicture}
      \end{center}
      is an isomorphism, since both $C$ and $\bSd(C)$ are saturated and, by Proposition~\ref{prop:saturation_and_purely_linear_kernels} $C^\ell$ and $\bSd(C)^\ell$ are purely linear kernels of $\mu^{\ell+1}$ and the morphism in cohomological degree $\ell+1$ of $\bSd(C)$, respectively.
      The uniqueness of purely linear kernels implies that $\eta_C$ restricted to $C^{\ge \ell}\to \bS^{\ge \ell}(C)$ is a cochain isomorphism, and so is $\eta_C$ by induction.
    \qedhere
  \end{enumerate}
\end{proof}

We stress that the above functors $\MM$, $\MMd$, $\RR$, $\RRd$, and $\bSd$ are constructive functors between constructively \nameft{Abel}ian categories.
We furthermore note that computing the natural transformation $\eta$ is constructive.

\section{The \nameft{Gabriel} monad of the category of coherent sheaves}\label{sec:sheaves}

In this section we prove that for any $d \in \Z$ the quotient category $\Sfpgrmodd/\Sfpgrmodd^0$ is equivalent to the category $\Coh \PP^n_B$ and that the corresponding \nameft{Gabriel} monad computes the (truncated) module of twisted global sections.

\begin{prop}\label{prop:truncated_quotients_are_sheaves}
  $\Coh\PP^n_B\simeq\Sfpgrmodd/\Sfpgrmodd^0$ for all $d\in\Z$.
\end{prop}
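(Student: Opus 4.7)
The plan is to reduce to the classical \nameft{Serre} equivalence $\Coh\PP^n_B\simeq\Sfpgrmod/\Sfpgrmod^0$ (which is already invoked in the introduction) by showing that the inclusion of the truncated category induces an equivalence on \nameft{Serre} quotients. More precisely, I would exhibit the full embedding $\iota:\Sfpgrmodd\hookrightarrow\Sfpgrmod$, observe that it sends $\Sfpgrmodd^0$ into $\Sfpgrmod^0$ (in fact $\Sfpgrmodd^0=\Sfpgrmod^0\cap\Sfpgrmodd$), and deduce from the universal property of the canonical functor $\QQ:\Sfpgrmod\to\Sfpgrmod/\Sfpgrmod^0$ an induced exact functor
\[
  \overline{\iota}:\Sfpgrmodd/\Sfpgrmodd^0\longrightarrow\Sfpgrmod/\Sfpgrmod^0\mbox{.}
\]
It then suffices to check that $\overline{\iota}$ is essentially surjective and fully faithful.

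Essential surjectivity is the easy half: for any $M\in\Sfpgrmod$ the canonical short exact sequence $M_{\geq d}\hookrightarrow M\twoheadrightarrow M/M_{\geq d}$ has cokernel concentrated in degrees $<d$, hence in $\Sfpgrmod^0$, so $\QQ(M_{\geq d})\xrightarrow{\sim}\QQ(M)$ and $M_{\geq d}\in\Sfpgrmodd$ is a preimage under $\overline{\iota}$. For full faithfulness I would unwind the directed colimit from Section~\ref{sec:cat} defining $\Hom$-groups in each \nameft{Serre} quotient for $M,N\in\Sfpgrmodd$. The key bookkeeping observation is that every submodule (resp.\ factor module) of an object in $\Sfpgrmodd$ is automatically concentrated in degrees $\geq d$ and is still finitely presented (since $S$ is \nameft{Noether}ian over the \nameft{Noether}ian $B$), so the system of pairs $(M',N')$ with $M/M'$ and $N'$ quasi-zero is identical in both indexings, and the transition maps agree. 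For each such pair, $\Hom_\Sfpgrmod(M',N/N')=\Hom_\Sfpgrmodd(M',N/N')$ because $\Sfpgrmodd$ is a full subcategory of $\Sfpgrmod$. Hence the two colimits coincide, which yields the desired isomorphism on $\Hom$-groups.

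The main obstacle is essentially the bookkeeping in the preceding paragraph, namely verifying that one can cofinally replace the submodule $M'$ in the colimit by a truncation $M_{\geq\ell}$ with $\ell\geq d$ (which then automatically lives in $\Sfpgrmodd$), and dually that any quasi-zero quotient of an object in $\Sfpgrmodd$ is quasi-zero in $\Sfpgrmodd$. Once this is done, combining $\overline{\iota}$ with the cited classical \nameft{Serre} equivalence yields $\Coh\PP^n_B\simeq\Sfpgrmod/\Sfpgrmod^0\simeq\Sfpgrmodd/\Sfpgrmodd^0$, proving the proposition. In particular this reproves localizingness of $\Sfpgrmodd^0\subset\Sfpgrmodd$ (as already stated in the definition of quasi-zero modules) since $\Coh\PP^n_B$ is the reflective subcategory of saturated objects, and it sets up the categorical framework needed in Sections~\ref{sect:ideal_transforms} and~\ref{sec:saturate_linear_complex} to identify both $D_{\m,\geq d}$ and $\bSd$ with the \nameft{Gabriel} monad of $\Sfpgrmodd$ w.r.t.\ $\Sfpgrmodd^0$.
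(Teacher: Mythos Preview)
Your proposal is correct and follows essentially the same route as the paper: both verify $\Sfpgrmodd^0=\Sfpgrmod^0\cap\Sfpgrmodd$, use the truncation $M_{\geq d}$ for essential surjectivity, and then appeal to the classical \nameft{Serre} equivalence $\Coh\PP^n_B\simeq\Sfpgrmod/\Sfpgrmod^0$. The only difference is that where you verify full faithfulness of $\overline{\iota}$ by hand via the colimit description of $\Hom$-groups, the paper instead invokes a general ``second isomorphism theorem for \nameft{Abel}ian categories'' from \cite[Prop.~3.2]{BL_SerreQuotients}, which packages exactly the two conditions you checked (the intersection identity and essential surjectivity of the composite) into a ready-made equivalence $\Sfpgrmodd/\Sfpgrmodd^0\simeq\Sfpgrmod/\Sfpgrmod^0$.
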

\begin{proof}
  First, the definitions directly imply $\Sfpgrmod^0\cap \Sfpgrmodd=\Sfpgrmodd^0$.
  Second, a preimage of $M\in \Sfpgrmod/\Sfpgrmod^0$ under $\Sfpgrmodd\to\Sfpgrmod/\Sfpgrmod^0$ is given by $M_{\ge d}$, since $M\cong M_{\ge d}$ in $\Sfpgrmod/\Sfpgrmod^0$.
  Now the second isomorphism theorem for \nameft{Abel}ian categories \cite[Prop.~3.2]{BL_SerreQuotients} implies the equivalence
  \[
    \Sfpgrmodd/\Sfpgrmodd^0\simeq\Sfpgrmod/\Sfpgrmod^0 \mbox{.}
  \]
  The latter category is equivalent to $\Coh\PP^n_B$ by \cite[Coro.~4.2]{BL_SerreQuotients}.
\end{proof}

A graded $S$-modules $M$ is called quasi finitely generated if each truncation $M_{\geq d}$ is finitely generated.
We denote by $\Sqfgrmod \subset \Sgrmod$ the full subcategory of such modules.
The functor
\[
    H^0_\bullet:\Coh\PP^n_B\to\Sqfgrmod: \shF\mapsto\bigoplus_{p\in\Z}H^0(\PP^n_B,\shF(p))
\]
computing the module of twisted global sections is right adjoint to the sheafification functor $\Sh: \Sqfgrmod \to \Coh \PP^n_B, M \mapsto \widetilde{M}$.
This was proved by \nameft{Serre} in the absolute case \cite[59]{FAC} and later by \nameft{Grothendieck} for the relative case.

Denote by $\Sh_{\geq d}:\Sfpgrmodd\to\Coh\PP^n_B$ the restriction of $\Sh$ to $\Sfpgrmodd$ and by
\[
  H^0_{\ge d}:\Coh\PP^n_B\to\Sfpgrmodd: \shF\mapsto\bigoplus_{p \in \Z_{\geq d}}H^0(\PP^n_B,\shF(p))
\]
the functor computing the truncated module of twisted global sections.
It follows that $H^0_{\ge d}$ is the right adjoint of $\Sh_{\geq d}$.

\begin{prop} \label{prop:monads_computing_H0}
  The monad $H^0_{\ge d}(\widetilde{\,\cdot\,}) = H^0_{\ge d}\circ\Sh_{\geq d}$ is a \nameft{Gabriel} monad of $\Sfpgrmodd$ w.r.t.\ the localizing subcategory $\Sfpgrmod^0_{\geq d}$.
  In particular, any \nameft{Gabriel} monad computes the truncated module of twisted global sections.
\end{prop}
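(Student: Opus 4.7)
The plan is to identify the sheafification functor $\Sh_{\geq d}$ with the canonical Serre quotient functor $\QQ:\Sfpgrmodd\to\Sfpgrmodd/\Sfpgrmodd^0$ under the equivalence of Proposition~\ref{prop:truncated_quotients_are_sheaves}, and then to invoke uniqueness of right adjoints. First I would verify that, under a chosen quasi-inverse of the equivalence $\Coh\PP^n_B\simeq\Sfpgrmodd/\Sfpgrmodd^0$, the functor $\Sh_{\geq d}$ corresponds naturally to $\QQ$: both are identity-on-objects functors sending a module to its class, and both invert exactly the morphisms with kernel and cokernel in $\Sfpgrmodd^0$.

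Once this identification is in place, the \nameft{Serre}--\nameft{Grothendieck} adjunction $\Sh_{\geq d}\dashv H^0_{\geq d}$ recorded just above the statement translates into an adjunction $\QQ\dashv H^0_{\geq d}$, showing in particular that $\Sfpgrmodd^0$ is a localizing subcategory in the sense of Section~\ref{sec:cat}. Writing $\SS$ for its section functor, uniqueness of right adjoints up to natural isomorphism forces $H^0_{\geq d}\simeq\SS$ as functors $\Coh\PP^n_B\simeq\Sfpgrmodd/\Sfpgrmodd^0\to\Sfpgrmodd$, and consequently
\[
  H^0_{\geq d}\circ\Sh_{\geq d}\;\simeq\;\SS\circ\QQ.
\]
This is, by the definition given in Section~\ref{sec:cat}, a \nameft{Gabriel} monad of $\Sfpgrmodd$ with respect to $\Sfpgrmodd^0$, proving the first assertion. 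The second assertion --- that every \nameft{Gabriel} monad computes the truncated module of twisted global sections --- is then formal, since by definition any two \nameft{Gabriel} monads are both equivalent to $\SS\circ\QQ$ and hence to each other.

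The step I expect to require the most care is the explicit matching of $\Sh_{\geq d}$ with $\QQ$: one has to trace through the equivalence of \cite[Coro.~4.2]{BL_SerreQuotients} together with the second isomorphism theorem \cite[Prop.~3.2]{BL_SerreQuotients} used in Proposition~\ref{prop:truncated_quotients_are_sheaves}, and check coherently that the composite $\Sfpgrmodd\xrightarrow{\Sh_{\geq d}}\Coh\PP^n_B\xrightarrow{\sim}\Sfpgrmodd/\Sfpgrmodd^0$ is naturally isomorphic to $\QQ$. Beyond this categorical bookkeeping no new computation is needed, and one does not have to verify the five conditions of Theorem~\ref{thm:Csaturating} by hand for $H^0_{\geq d}(\widetilde{\,\cdot\,})$; instead they are automatic from the adjoint characterization of $\SS\circ\QQ$.
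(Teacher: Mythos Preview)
Your proposal is correct and follows essentially the same route as the paper: both arguments identify $\Sh_{\geq d}$ with $\alpha_{\geq d}\circ\QQ_{\geq d}$ via Proposition~\ref{prop:truncated_quotients_are_sheaves}, then transport the adjunction $\Sh_{\geq d}\dashv H^0_{\geq d}$ along the equivalence to exhibit $H^0_{\geq d}\circ\alpha_{\geq d}$ as a section functor $\SS_{\geq d}$, so that $H^0_{\geq d}\circ\Sh_{\geq d}\simeq\SS_{\geq d}\circ\QQ_{\geq d}$ is a \nameft{Gabriel} monad by definition. One small wording issue: it is not accurate to say that $\Sh_{\geq d}$ is ``identity-on-objects''; what you mean is that, after composing with a quasi-inverse of $\alpha_{\geq d}$, it becomes naturally isomorphic to $\QQ_{\geq d}$, which is the content of the cited references.
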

\begin{proof}
  Let $\QQ_{\ge d}:\Sfpgrmodd\to\Sfpgrmodd/\Sfpgrmodd^0$ be the canonical functor.
  The equivalence in Proposition~\ref{prop:truncated_quotients_are_sheaves} is constructed as a functor $\alpha_{\ge d}:\Sfpgrmodd/\Sfpgrmodd^0\to\Coh\PP^n_B$ with $\alpha_{\ge d}\circ\QQ_{\ge d}\simeq\Sh_{\geq d}$.
  An easy calculation shows that a right adjoint of $\QQ_{\ge d}$ is given by $\SS_{\ge d}:=H^0_{\ge d}\circ\alpha_{\ge d}$.
  In particular, $\SS_{\ge d}\circ\QQ_{\ge d}$ is a \nameft{Gabriel} monad of $\Sfpgrmodd$ w.r.t.\ $\Sfpgrmod^0_{\geq d}$ by \cite[Lemma~4.3]{BL_Monads}.
  Now the claim follows, as $\SS_{\ge d}\circ\QQ_{\ge d} = H^0_{\ge d}\circ\alpha_{\ge d}\circ\QQ_{\ge d}\simeq H^0_{\ge d}\circ\Sh_{\geq d} = H^0_{\ge d}(\widetilde{\,\cdot\,})$.
\end{proof}

\begin{coro} \label{coro:monads_computing_H0}
  There exist natural isomorphisms
  \[
    H^0_{\geq d}(\widetilde{M}) \cong D_{\m,\geq d}(M)
    \quad \mbox{and} \quad
    \RR\left( H^0_{\geq d}(\widetilde{M})\right ) \cong \bSd(\RR(M)) \mbox{,}
  \]
  in particular for $i \geq d$
  \[
    H^0(\widetilde{M}(i)) \cong \left(D_{\m,\geq d}(M)\right)_i \cong \left(\bSd(\RR(M))\right)^i_i \mbox{.}
  \]
\end{coro}

\begin{rmrk} \label{rmrk:E1}
  In the absolut case, i.e., when $B = k$ is a field, the (objects of the truncated) \nameft{Tate} resolution $\T^{\geq d}(M)$ relate to the higher cohomology modules $H^q_{\geq d}(\widetilde{M})$ by \cite{EFS}
  \begin{equation} \label{eq:Tate} \tag{*}
    \T^{\geq d}(M)^i = \bigoplus_{q=0}^{\min\{n,i - d\}} \omega_E \otimes_k H^q\left(\widetilde{M}(i-q)\right)\mbox{,}
  \end{equation}
  while the (truncated) purely linear saturation directly extracts $H^0$:
  \[
    \bSd(\RR(M))^i = \omega_E \otimes_B H^0\left(\widetilde{M}(i)\right) \mbox{.}
  \]
  
  In the relative case the analogue of \eqref{eq:Tate} is more subtle:
  The \nameft{Tate} resolution $\T^{\geq d}(M)$ is by its bi-graded structure in fact a multi-complex $\T^{\geq d,\bullet}(M)$.
  Since each multi-complex is a filtered complex and hence induces a spectral sequence\footnote{The vertical morphisms of this multi-complex are the morphisms between the graded summands represented by scalar matrices (i.e., degree zero in $E$).
  This differs from the \textsc{Macaulay2} convention used in \cite{ES}, where these morphisms are arranged ``diagonally up and to the right'' (cf.~\cite[Chapter~3]{ES}).
  Hence, we do not arrange the direct summands of the modules in the \nameft{Tate} resolution vertically, but diagonally up and to the left.}
  $E^{p,q}(\T^{\geq d,\bullet}(M)) \Longrightarrow 0$, where for each row-complex on the first page the following isomorphism holds
  \[
    E_1^{\geq d,q} \! \left(\T^{\geq d,\bullet}(M)\right) \cong \RR\left( H^q_{\geq d}(\widetilde{M})\right)\mbox{.}
  \]
  This is implicit in \cite{ES}, see also \cite[Corollary~3.6]{EFS}.
  Thus, the relation between the purely linear saturation and the \nameft{Tate} resolution is given by
  \[
    E_1^{\geq d,0} \! \left(\T^{\geq d,\bullet}(M)\right) \cong \bSd(\RR(M)) \cong \RR\left( H^0_{\geq d}(\widetilde{M})\right) \mbox{.}
  \]
  The first isomorphism is not a priori obvious in the relative case and gives a direct way to compute $\RR\left( H^0_{\geq d}(\widetilde{M})\right)$ via $\bSd(\RR(M))$ without computing (most of) the \nameft{Tate} resolution.\footnote{In absolute case $B=k$ the isomorphism easily follows from the fact that the bottom complex $E_1^{\geq d,0} \! \left(\T^{\geq d,\bullet}(M)\right)$ of the first spectral sequence is already the subcomplex of the \nameft{Tate} resolution consisting of those direct summands of the objects in $\T^{\geq d}(M)$ where the degree of the socle equals the cohomological degree.}
\end{rmrk}

\def\cprime{$'$} \def\cprime{$'$} \def\cprime{$'$} \def\cprime{$'$}
  \def\cprime{$'$}
\providecommand{\bysame}{\leavevmode\hbox to3em{\hrulefill}\thinspace}
\providecommand{\MR}{\relax\ifhmode\unskip\space\fi MR }
\providecommand{\MRhref}[2]{%
  \href{http://www.ams.org/mathscinet-getitem?mr=#1}{#2}
}
\providecommand{\href}[2]{#2}

\end{document}
